\DeclareRobustCommand{\SkipTocEntry}[4]{}
\begin{document}

\newtheorem{teo}{Theorem}[section]
\newtheorem{cor}[teo]{Corollary}
\newtheorem{lema}[teo]{Lemma}
\newtheorem{prop}[teo]{Proposition}
\newtheorem{nota}{Notation}
\newtheorem{defn}[teo]{Definition}
\newtheorem*{defn*}{Definition}
\newtheorem*{teosim}{Theorem A}
\newtheorem*{teocon}{Theorem B}
\newtheorem*{coro}{Corollary}
\newtheorem*{inv}{Invariance Principle}
\newtheorem*{af}{Claim}
\newtheorem{obs}[teo]{Remark}
\newtheorem{corol}{Corollary}

\title[$C^r$-density of (non-uniform) hyperbolicity in $PH^r_{\omega}(M)$]{$C^r$-density of (non-uniform) hyperbolicity in partially hyperbolic symplectic diffeomorphisms}
\author{Karina Marin}
\address{IMPA- Estrada D. Castorina 110, Jardim Bot\^anico, 22460-320 Rio de Janeiro -Brazil.}
\email{kmarin@impa.br}

\begin{abstract}
We use the Invariance Principle of Avila and Viana to prove that every partially hyperbolic symplectic diffeomorphism with 2-dimensional center bundle, having a periodic point and satisfying certain pinching and bunching conditions, can be $C^r$-approximated by non-uniformly hyperbolic diffeomorphisms.
\end{abstract}

\subjclass[2010]{37D25;37D30} 
	
\maketitle
\tableofcontents

\section{Introduction}

In the theory of Dynamical Systems, hyperbolicity is a core concept whose roots may be traced back to Hadamard and Perron and which was first formalized by Smale \cite{Sm} in the 1960's. It implies several features that are most effective to describe the system's dynamical behavior.

While Smale's uniform hyperbolicity was soon realized to be a fairly restrictive property, a more flexible version was proposed by Pesin \cite{P} about a decade later: one speaks of \emph{non-uniform hyperbolicity} when all the Lyapunov exponents are different from zero almost everywhere with respect to some preferred invariant measure (for instance, a volume measure).

While being more general, non-uniform hyperbolicity still has many important consequences, most notably: the stable manifold theorem (Pesin \cite{P}), the abundance of periodic points and Smale horseshoes (Katok \cite{K}) and the fact that the fractal dimension of invariant measures is well defined (Barreira, Pesin and Schmelling \cite{BPS}). Thus, the question of how general non-uniform hyperbolicity is, naturally arises, and indeed, it goes back to Pesin's original work.   

However, the set of non-uniformly hyperbolic systems is usually \emph{not} dense. Herman (see the presentation of Yoccoz \cite{Y}) constructed open subsets of $C^r$, with large $r$, volume-preserving diffeomorphisms admitting invariant subsets with positive volume consisting of
codimension-1 quasi-periodic tori: on such subsets all the Lyapunov exponents vanish identically. Other examples with a similar flavor were found by Cheng and Sun \cite{CS} and Xia \cite{X}.

Before that, in the early 1980's, Ma\~n\'e \cite{Ma} observed that every area-preserving diffeomorphism that is not Anosov can be $C^1$-approximated by diffeomorphisms with zero Lyapunov exponents. His arguments were completed by Bochi \cite{B1} and were extended to arbitrary dimension by Bochi and Viana \cite{B2,BV2}. In particular, Bochi \cite{B2} proved that every partially hyperbolic symplectic diffeomorphism can be $C^1$-approximated by partially hyperbolic diffeomorphisms whose center Lyapunov exponents vanish.
 
By the end of last century, Alves, Bonatti and Viana were studying the ergodic properties of partially hyperbolic diffeomorphisms. In \cite{ABV,BV3} they proved that under some amount of hyperbolicity along the center bundle (`mostly contracting' or `mostly expanding' center direction) the diffeomorphism admits finitely many physical measures and the union of their basins contains almost every point in the manifold.

This again raised the question of how frequent non-uniform hyperbolicity is, this time focusing on the partially hyperbolic setting. \emph{Can one always approximate the diffeomorphism by another whose center Lyapunov exponents are non-zero?} This question was the origin of a whole research program, focusing first on linear cocycles and dealing more recently also with non-linear systems. We refer the reader to the book of Viana \cite{V} for a detailed survey of some of the progress attained so far. 

Our own results are based on methods that were developed in these 15 years or so and may be viewed as the fulfillment of that program in the context of symplectic diffeomorphisms with $2$-dimensional center. We proved (all the keywords will be recalled in the next section):
\begin{teosim}
Let $f:M\longrightarrow M$ be a partially hyperbolic symplectic $C^r$ diffeomorphism on a compact manifold $M$. Assume that $f$ is accessible, center-bunched and pinched, the set of periodic points is non-empty, and the center bundle $E^c$ is $2$-dimensional. Then, $f$ can be $C^r$-approximated by non-uniformly hyperbolic symplectic diffeomorphisms.
\end{teosim}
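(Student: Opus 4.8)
The plan is to use the Invariance Principle to show that, within the class under consideration, a diffeomorphism which is additionally \emph{twisting} along some accessibility loop cannot have vanishing center Lyapunov exponents, and then to create such a twisting loop by an arbitrarily small $C^r$-perturbation. First, a reduction: since $f$ is symplectic and $\dim E^c=2$, the form $\omega|_{E^c}$ is symplectic, so the two center exponents are $\lambda^c\ge 0$ and $-\lambda^c$, while the exponents along $E^s\oplus E^u$ are automatically nonzero. As $f$ is accessible and center-bunched, Liouville volume is ergodic (Burns–Wilkinson), so $\lambda^c$ is a.e.\ constant and $f$ is non-uniformly hyperbolic if and only if $\lambda^c>0$. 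It therefore suffices to exhibit, arbitrarily $C^r$-close to $f$, a symplectic diffeomorphism with positive center exponent.

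\emph{The key dichotomy.} Projectivize the symplectic derivative cocycle $A=Df|_{E^c}$, obtaining an action on the bundle $\mathbb{P}(E^c)$ with fibers $\cong\mathbb{P}^1$; center-bunching provides continuous stable and unstable holonomies for $A$, and the pinching hypothesis furnishes a periodic point $p$, of period $k$, at which $Df^k|_{E^c_p}$ is hyperbolic, with eigendirections $\xi^\pm_p$. Suppose $\lambda^c=0$. The Invariance Principle of Avila–Viana then applies: any $A$-invariant probability $\hat m$ on $\mathbb{P}(E^c)$ projecting to volume has a disintegration $\{m_x\}$ invariant under the stable and unstable holonomies of $A$, and — using accessibility together with the local product structure of volume — this disintegration admits a continuous version on all of $M$. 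Fiberwise invariance at $p$ makes $m_p$ invariant under the hyperbolic Möbius transformation $Df^k|_{E^c_p}$ of $\mathbb{P}^1$, so by Poincaré recurrence $m_p$ is supported on $\{\xi^+_p,\xi^-_p\}$. If in addition $f$ is \emph{twisting}, i.e.\ there is an $su$-loop $\gamma$ based at $p$ whose $A$-holonomy $H_\gamma$ sends each of $\xi^\pm_p$ off $\{\xi^+_p,\xi^-_p\}$, then holonomy invariance forces $(H_\gamma)_*m_p=m_p$, which is impossible because $(H_\gamma)_*m_p$ is supported on a set disjoint from $\mathrm{supp}\,m_p$. Hence every diffeomorphism in this class which is also twisting satisfies $\lambda^c>0$, and is therefore non-uniformly hyperbolic.

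\emph{Creating the twist.} Fix $\varepsilon>0$ and an $su$-loop $\gamma$ based at $p$ (one exists by accessibility, since $\dim E^s,\dim E^u\ge1$). A $C^r$-small symplectic perturbation supported in a small ball meeting $\gamma$ but disjoint from $\mathrm{orb}(p)$ modifies $H_\gamma$ by a prescribed small element of $PSL(2,\mathbb{R})$ — enough to put $H_\gamma(\xi^\pm_p)$ in general position relative to $\{\xi^+_p,\xi^-_p\}$, making $f$ twisting — while, being $C^r$-small, preserving partial hyperbolicity, center-bunching, and the orbit of $p$ with its hyperbolic center cocycle. Accessibility also survives, since any two points can be joined by an $su$-path avoiding the small ball. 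Applying the key dichotomy to the perturbed diffeomorphism $g$ yields $\lambda^c(g)>0$, whence $g$ is non-uniformly hyperbolic and $d_{C^r}(f,g)<\varepsilon$. (If $f$ is already twisting, apply the dichotomy directly to $f$.)

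\emph{Main obstacle.} The analytic heart is the invocation of the Invariance Principle in the $C^r$ category and the ensuing continuity and rigidity of the holonomy-invariant disintegration. The remaining difficulties are perturbative: realizing a periodic point with hyperbolic center cocycle when the pinching hypothesis does not yield one outright — which needs a preliminary perturbation near a periodic orbit and a way to exclude or circumvent the elliptic case — and realizing the twisting holonomy, all by $C^r$-small symplectic perturbations that preserve partial hyperbolicity and, crucially, accessibility, which is not a priori $C^r$-open.
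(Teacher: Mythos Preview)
Your overall strategy---use the Invariance Principle to force rigidity of the disintegration at a pinching periodic point, then break that rigidity by a $C^r$-small perturbation along an $su$-loop---matches the paper's. The genuine gap is the sentence ``A $C^r$-small symplectic perturbation supported in a small ball meeting $\gamma$ but disjoint from $\mathrm{orb}(p)$ modifies $H_\gamma$ by a prescribed small element of $PSL(2,\mathbb{R})$.'' This is precisely the main technical difficulty, and it occupies the bulk of the paper (Sections~3--6).

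The issue is quantitative. A symplectic perturbation that is $\epsilon$-small in $C^r$ and supported in a ball of radius $\delta$ can rotate $Df|_{E^c}$ at the center of the ball only by an angle $\beta$ with $\sin\beta \asymp \epsilon\,\delta^{r-1}$. But the same perturbation also moves the strong foliations, hence the $su$-path $\gamma$ itself, hence the center bundle along it, and hence the holonomies $H^s,H^u$ computed along the \emph{new} path for the \emph{new} cocycle. These unwanted changes are a priori of uncontrolled size relative to $\beta$; if they are comparable to $\beta$ your intended twist may be cancelled. The paper resolves this by (i) choosing a special node $z_l$ on the loop with \emph{slow recurrence}---the orbits of all nodes stay at distance $\ge c/(1+j^2)$ from $z_l$---then (ii) taking a \emph{sequence} $f_k$ of perturbations supported in balls of radius $\delta_k\sim k^{-2}$ around $z_l$, and (iii) proving that the drift of the $su$-path, of the center bundle, and of the holonomies under $f_k$ is \emph{exponentially} small in $k$, whereas $\beta_k\sim\delta_k^{\,r-1}$ is only polynomially small. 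Only this disparity guarantees that the injected rotation survives in the loop holonomy. A further subtlety is that the perturbed loop need not close up at $p$, so one must also control the disintegration at the nearby endpoint $p_k$; this uses the hyperbolicity of $Df^{n_p}|_{E^c_p}$ together with a separate continuity argument for $su$-paths under variation of the diffeomorphism (Section~5).

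Two smaller points. First, in this paper ``pinched'' means the $\alpha$-pinched rate condition on the partially hyperbolic splitting, not that some periodic point has hyperbolic center derivative; the latter is the separate notion of a \emph{pinching periodic point}. The paper handles the elliptic case by invoking Newhouse to produce a nearby hyperbolic periodic point and then observing that coexistence of an elliptic and a hyperbolic periodic point already contradicts holonomy invariance of the disintegration---so your acknowledged obstacle there has a clean resolution. Second, your argument for persistence of accessibility (``join by an $su$-path avoiding the small ball'') does not work: $su$-paths for the perturbed map use the \emph{perturbed} foliations, which need not agree with the original ones even outside the support of the perturbation. The paper instead relies on the theorem of Avila--Viana that accessibility with $2$-dimensional center is $C^1$-open.
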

Let us stress that our perturbation holds in the $C^r$ topology, for any $r\in[2,+\infty)$. The case $r=1$ is very special and much better understood. 

The first result along these lines was due to Shub and Wilkinson \cite{SW}, who proved that certain partially hyperbolic skew-products with circle center leaves can be perturbed to make the center Lyapunov exponent different from zero. Their approach relates the issue of non-uniform hyperbolicity into the analysis of the center foliation and its measure-theoretical properties, a connection that has been much deepened and clarified in the recent work of Avila, Viana and Wilkinson \cite{AVW}. 

Baraviera and Bonatti \cite{BB} extended the approach of Shub and Wilkinson to prove that any stably ergodic partially hyperbolic  diffeomorphism can be $C^1$-approximated by another for which the \emph{sum} of the center Lyapunov exponents is non-zero. In particular, this implies that every partially hyperbolic diffeomorphism with 1-dimensional center bundle can be $C^1$-approximated by non-uniformly hyperbolic systems. 

The results in \cite{BB}, together with the results mentioned above of Bochi and Viana \cite{BV2}, were used by Bochi, Fayad and Pujals \cite {BFP}, to prove that every $C^{1+\alpha}$ stably ergodic diffeomorphism can be $C^1$-approximated by non-uniformly hyperbolic ones. More recently, Avila, Crovisier and Wilkinson \cite{ACW} proved a general theorem that implies that every partially hyperbolic volume-preserving diffeomorphism can be $C^1$-approximated by non-uniformly hyperbolic systems, thus solving the question completely in the $C^1$ case.

Perturbative results in the $C^r$ topology, $r>1$, are notoriously more difficult and, in fact, there is good evidence suggesting that the conclusions may also be very different. In this regard, we refer the reader to the discussions in Chapter~12 of \cite{BDV}, Chapter~10 of \cite{V}, Theorem A of \cite{AV1} and the Conjectures in Section 6 of \cite{Pe}.

The obvious first case to look at is when the center bundle is 1-dimensional. While there are some situations where we know how to get rid of a zero center Lyapunov exponent, for example \cite{DP, SW}, $C^r$-density of hyperbolicity is not known even in this case. Our methods in the present paper, which are based on the projectivization of the center bundle, do not seem to be useful in this context. 

An important tool in our approach is the Invariance Principle, which was first developed by Furstenberg \cite{F} and Ledrappier \cite{L} for random matrices and was extended by Bonatti, G\'omez-Mont, Viana \cite{BGV} to linear cocycles over hyperbolic systems and by Avila, Viana \cite{AV1} and Avila, Santamaria, Viana \cite{ASV} to general (diffeomorphisms) cocycles. In \cite{AV1} the base dynamics is still assumed to be hyperbolic, whereas in \cite{ASV}, it is taken to be partially hyperbolic and volume-preserving.

The Invariance Principle asserts that for the Lyapunov exponents to vanish the system must exhibit rather rigid (holonomy invariant) features. Often, one can successfully exploit those features to describe the system in a rather explicit way. One fine example is the main result of Avila, Viana and Wilkinson \cite{AVW}: small perturbations of the time-$1$ map of the geodesic flow on a surface with negative curvature either are non-uniformly hyperbolic or embed into a smooth flow. 

Another fine application was made by Avila and Viana \cite{AV1}, who exhibited partially hyperbolic diffeomorphisms for which the Lyapunov exponents can not vanish because structure arising from the Invariance Principle, namely invariant line fields, is incompatible with the topology of the center leaves (which are assumed to be surfaces of genus $g>1$).

Perhaps the main novelty in this work is that we are able to use the Invariance Principle in a perturbative way, to prove that the Lyapunov exponents can be made non-zero. At a more technical level, another main novelty resides in our handling of the accessibility property, namely the way $su$-paths and their holonomies vary, under perturbations of the diffeomorphism; see Section 4 and 5. 

As an example of the reach of Theorem A, let us state the following result that is related to Question 1b) in \cite{SW}. Let $f:\mathbb{T}^{2d} \longrightarrow \mathbb{T}^{2d}$ be a Anosov symplectic $C^r$ diffeomorphism and $g_{\lambda}:\mathbb{T}^2 \longrightarrow \mathbb{T}^2 $ denote the \textit{standard map} on the $2$-torus. 
\begin{coro} If $\lambda$ is close enough to zero, then $f\times g_{\lambda}$ can be $C^r$-approximated by non-uniformly hyperbolic symplectic diffeomorphisms.
\end{coro}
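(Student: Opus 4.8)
The plan is to obtain the Corollary from Theorem~A, applied not to $f\times g_{\lambda}$ itself but to a $C^{r}$-small symplectic perturbation of it. Write $F_{\lambda}=f\times g_{\lambda}$ on $\mathbb{T}^{2d}\times\mathbb{T}^{2}$ with the product symplectic form; it is $C^{r}$ and symplectic. First I would check that, for $|\lambda|$ small, $F_{\lambda}$ is partially hyperbolic with $2$-dimensional center bundle $E^{c}=\{0\}\oplus T\mathbb{T}^{2}$: as $\lambda\to0$ the standard map converges in the $C^{1}$ topology to the integrable shear $g_{0}$, so $\|Dg_{\lambda}^{\pm1}\|$ stays uniformly bounded and $\|Dg_{\lambda}^{n}\|\,\|Dg_{\lambda}^{-n}\|$ grows sub-exponentially in $n$, whereas $Df$ contracts $E^{s}$ and expands $E^{u}$ exponentially; hence the domination needed for partial hyperbolicity, and the center-bunching inequality comparing these two rates, both hold after a fixed number of iterates. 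For the pinched periodic point: the set of periodic points of $f$ is non-empty since $f$ is Anosov, so fix a periodic point $x_{0}$; and for $\lambda\neq0$ small the standard map has a hyperbolic fixed point (the origin when $\lambda>0$), say $z_{\lambda}$, with $Dg_{\lambda}(z_{\lambda})=\bigl(\begin{smallmatrix}1+\lambda&1\\\lambda&1\end{smallmatrix}\bigr)$ of determinant $1$ and trace $2+\lambda>2$, hence with two distinct real eigenvalues $\mu^{+}_{\lambda}>1>\mu^{-}_{\lambda}>0$. Then $p=(x_{0},z_{\lambda})$ is a hyperbolic periodic point of $F_{\lambda}$ whose return derivative on $E^{c}_{p}$ has eigenvalues of distinct modulus, i.e.\ the pinching condition holds; so ``close enough to zero'' must be read as $0<|\lambda|$ small (at $\lambda=0$ the shear is integrable and pinching fails).

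The only hypothesis of Theorem~A that $F_{\lambda}$ violates is accessibility: the strong stable and strong unstable bundles of $F_{\lambda}$ lie entirely in the $\mathbb{T}^{2d}$-directions, so every strong leaf sits inside a slice $\mathbb{T}^{2d}\times\{z\}$, and since $f$, being Anosov, is accessible, the accessibility class of $(x,z)$ is exactly $\mathbb{T}^{2d}\times\{z\}$. The heart of the argument is therefore to produce, arbitrarily $C^{r}$-close to $F_{\lambda}$, a symplectic diffeomorphism $\widetilde F$ that is (stably) accessible while retaining the properties above. I would do this by a Brin/Shub--Wilkinson/Dolgopyat--Wilkinson-type perturbation: replace $F_{\lambda}$ by $\Phi^{1}_{\varepsilon H}\circ F_{\lambda}$, where $\Phi^{1}_{\varepsilon H}$ is the time-one map of the Hamiltonian flow of a small $\varepsilon H$ with $H=\rho(w)K(z)$, $\rho$ a bump function supported in a small ball of $\mathbb{T}^{2d}$ that avoids $x_{0}$, and $K$ chosen so that, combined with the accessibility of $f$ along the base, the holonomies of $su$-loops generate enough motion in the fibre to make $\widetilde F$ accessible. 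Since $H$ vanishes near $x_{0}$, the point $p$ and its center eigenvalues, hence the pinching, are untouched; and for $\varepsilon$ small $\widetilde F$ stays partially hyperbolic with $2$-dimensional center and center-bunched, these being robust conditions.

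With all hypotheses of Theorem~A holding on a $C^{r}$-neighbourhood of $\widetilde F$, Theorem~A yields non-uniformly hyperbolic symplectic diffeomorphisms arbitrarily $C^{r}$-close to $\widetilde F$; as $\widetilde F$ can be taken arbitrarily $C^{r}$-close to $F_{\lambda}=f\times g_{\lambda}$, these diffeomorphisms also $C^{r}$-approximate $f\times g_{\lambda}$, which is the assertion.

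I expect the main obstacle to be the accessibility step: building an arbitrarily $C^{r}$-small symplectic perturbation of the product that is (stably) accessible and at the same time preserves the pinched hyperbolic periodic point and the bunching. The remaining verifications — partial hyperbolicity with $2$-dimensional center, existence of the periodic point, the eigenvalue computation for the standard map, and robustness of bunching — are routine.
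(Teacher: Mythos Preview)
Your approach is essentially the paper's: verify that $f\times g_{\lambda}$ is partially hyperbolic with $2$-dimensional center and satisfies the bunching/pinching hypotheses for small $\lambda$, perturb symplectically in the $C^{r}$ topology to obtain accessibility while keeping a periodic point (the paper invokes Shub--Wilkinson \cite{SW2} for exactly the Hamiltonian-type perturbation you sketch), and then apply Theorem~A.

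Two small corrections. First, Theorem~A requires membership in $B^{r}_{\omega}(M)$, i.e.\ $\alpha$-pinched \emph{and} $\alpha$-bunched for a common $\alpha>0$, which is strictly stronger than the center-bunching you verify; the paper handles this by noting that $f\times g_{0}$ has isometric center, hence is $\alpha$-pinched and $\alpha$-bunched for every $\alpha$, and both conditions are $C^{1}$-open. Second, Theorem~A needs only \emph{some} periodic point, not a pinching one, so your eigenvalue computation for the standard map's hyperbolic fixed point, and the resulting exclusion of $\lambda=0$, are unnecessary: any periodic point of $f$ crossed with any periodic point of $g_{\lambda}$ (or of the perturbed map) suffices, and the case $\lambda=0$ is covered as well.
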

\addtocontents{toc}{\SkipTocEntry}
\subsection*{Acknowledgments} 
The author would like to thank Marcelo Viana for the guidance and encouragement during her Ph.D. Thesis at IMPA which originates this work, Artur Avila for useful conversations and the anonymous referee for a thorough revision of the paper that greatly helped improve the presentation. The author has been partially supported by IMPA, CNPq, CAPES and FAPERJ. 

\section{Preliminaries and Statements}

From now on, $M$ will denote a compact manifold and $f:M\longrightarrow M$ a partially hyperbolic diffeomorphism. In this section we define this and other related notions. For more information see \cite{BDV,HPS,Sh}. 

A diffeomorphism $f:M\longrightarrow M$ of a compact manifold $M$ is \textit{partially hyperbolic} if there exist a nontrivial splitting of the tangent bundle $$TM=E^{s}\oplus E^{c}\oplus E^{u}$$ invariant under the derivative map $Df$, a Riemannian metric $\left\| \cdot \right\|$ on $M$, and positive continuous functions $\chi$, $\widehat{\chi}$, $\nu$, $\widehat{\nu}$, $\gamma$, $\widehat{\gamma}$ with 
$$\chi< \nu < 1 < \widehat{\nu}^{-1} < \widehat{\chi}^{-1} \quad  \text{and} \quad \nu< \gamma < \widehat{\gamma}^{-1}< \widehat{\nu}^{-1},$$ such that for any unit vector $v\in T_{p}M$, 
\begin{equation}\label{ph}
\begin{aligned}
\chi(p)< &\left\| Df_{p}(v) \right\|< \nu(p) \quad \quad \text{if} \; v\in E^{s}(p), \\
\gamma(p) < &\left\| Df_{p}(v) \right\|< \widehat{\gamma}(p)^{-1} \quad \text{if} \; v\in E^{c}(p),\\
\widehat{\nu}(p)^{-1}< &\left\| Df_{p}(v) \right\| < \widehat{\chi}(p)^{-1} \quad \text{if} \; v\in E^{u}(p).
\end{aligned}
\end{equation}
Partial hyperbolicity is a $C^1$-open condition, that is, any diffeomorphism sufficiently $C^1$-close to a partially hyperbolic diffeomorphism is itself partially hyperbolic. 

Here, $\mu$ will always denote a probability measure and $\omega$ a symplectic form. The Lebesgue class of $M$ is the measure class of the volume induced by any Riemannian metric and we say that $f$ is \textit{volume-preserving} if it preserves some probability measure in this class. 

For $r\geq 2$, denote by $PH^r_{\mu}(M)$ the set of partially hyperbolic volume-preserving $C^r$ diffeomorphisms. If $M$ is a symplectic manifold, denote by $PH^{r}_{\omega}(M)$ the set of partially hyperbolic $C^{r}$ diffeomorphisms preserving $\omega$. 

For every partially hyperbolic diffeomorphism the stable and unstable bundles $E^{s}$ and $E^{u}$ are uniquely integrable and their integral manifolds form two tranverse (continuous) foliations $W^{s}$ and $W^{u}$, whose leaves are immersed submanifolds of the same class of differentiability as $f$. These foliations are called the \textit{strong-stable} and \textit{strong-unstable} foliations. They are invariant under $f$, in the sense that
$$f(W^{s}(x))=W^{s}(f(x)) \qquad \text{and}\qquad f(W^{u}(x))=W^{u}(f(x)),$$ where $W^{s}(x)$ and $W^{u}(x)$ denote the leaves of $W^{s}$ and $W^{u}$, respectively, passing through any $x\in M$. 

Given two points $x,y\in M$, $x$ is \textit{accessible} from $y$ if there exists a path that connects $x$ to $y$, which is a concatenation of finitely many subpaths, each of which lies entirely in a single leaf of $W^u$ or a single leaf of $W^s$. We call this type of path, \textit{su-path}. This define an equivalence relation and we say that $f$ is \textit{accessible} if $M$ is the unique accessibility class. 
\begin{defn}[$\alpha$-pinched]\label{holder} Let $f$ be a partially hyperbolic diffeomorphism and $\alpha>0$. We say that $f$ is $\alpha$-pinched if the functions in Equation (\ref{ph}) satisfy,  
\begin{equation*}
\begin{aligned}
\nu &< \gamma\, \chi^{\alpha} \quad \text{and} \quad \; \widehat{\nu} < \widehat{\gamma}\, \widehat{\chi}^{\alpha}, \\
\nu &< \gamma\, \widehat{\chi}^{\alpha} \quad \text{and} \quad \; \widehat{\nu} < \widehat{\gamma}\, \chi^{\alpha}. 
\end{aligned}
\end{equation*} 
\end{defn}
Notice this is a $C^1$-open property. Moreover, every partially hyperbolic diffeomorphism is $\alpha$-pinched for some $\alpha>0$. From the works in \cite{A,HPS,PSW1}, we know that the center bundle $E^c$, and the $W^s$ and $W^u$ holonomies are $\alpha$-H\"older for every $\alpha$-pinched $C^2$ diffeomorphism. 
 
We say that a partially hyperbolic diffeomorphism is \textit{center bunched} if 
$$\nu< \gamma \widehat{\gamma} \quad \text{and} \quad \widehat{\nu}< \gamma \widehat{\gamma}.$$
By Theorem 0.1 of \cite{BW}, in the volume-preserving setting, center bunching together with accessibility implies that the diffeomorphism is ergodic. 

Since we want to use the Invariance Principle, we need $f$ to be center bunched. However, this notion is not enough for our work and we need to define a stronger condition.
\begin{defn}[$\alpha$-bunched]\label{bunched} Let $f$ be a partially hyperbolic diffeomorphism and $\alpha>0$. We say that $f$ is $\alpha$-bunched if the functions in Equation (\ref{ph}) satisfy, 
$$\nu^{\alpha} < \gamma \widehat{\gamma} \qquad \text{and} \qquad \widehat{\nu}^{\alpha}< \gamma \widehat{\gamma}.$$ 
\end{defn}
This is also a $C^1$-open property. If $Df\vert E^c$ is an isometry, then the condition holds for every $\alpha>0$. Moreover, $\alpha$-bunched implies center bunched if $\alpha\leq 1$. Notice that as $\alpha$ decreases it is more difficult to have $\alpha$-bunched, contrary to what happens with the condition of $\alpha$-pinched. 

At this point we are ready to give the precise definition of the set of partially hyperbolic systems where Theorem A holds. Recall $*\in \{\mu, \omega\}$ where $\mu$ denotes some probability measure in the Lebesgue class and $\omega$ denotes a symplectic form. 
\begin{defn} If $r\geq 2$, we will denote $ B^{r}_{*}(M)$, the subset of $PH^{r}_{*}(M)$ where $f$ is accessible, $\alpha$-pinched and $\alpha$-bunched for some $\alpha>0$, and the center bundle $E^c$ is 2-dimensional.
\end{defn}
We want to remark two properties of the set $B^{r}_{*}(M)$. Avila and Viana in \cite{AV2} proved, under the hypothesis of 2-dimensional center bundle, that accessibility is a $C^1$-open property. This implies that $B^{r}_{*}(M)$ is an open set. We will give the precise statement and some ideas of the proof of this result in Section 5. Moreover, as already mentioned every $f\in B^{r}_{*}(M)$ is ergodic. 

The conditions of $\alpha$-pinched and $\alpha$-bunched will allow us to use the Invariance Principle. In Section 3, we are going to apply both of them to prove that the cocycle associated to $f$, $F=Df\vert E^c$, admits holonomies. Moreover, the $\alpha$-pinched condition is going to be use also in Section 4 where we need the $W^s$ and $W^u$ holonomies to be $\alpha$-H\"older in order to estimate how $su$-paths change when we perturb the diffeomorphism. 

If $f$ is a volume-preserving $C^1$ diffeomorphism, by Oseledets Theorem for $\mu$-almost every point $x\in M$, there exist $k(x)\in \mathbb{N}$, real numbers $\lambda_1(f,x)> \cdots > \lambda_{k(x)}(f,x)$ and a splitting $T_{x}M=E^{1}_x\oplus \cdots \oplus E^{k(x)}_x$ of the tangent bundle at $x$, all depending measurably on the point $x$, such that
$$\lim\limits_{n\rightarrow \pm \infty} \frac{1}{n} \text{log} \left\|Df^{n}_x(v)\right\|= \lambda_j(f,x) \quad \text{for all} \; v\in E^j_x \setminus \{0\}. $$
The real numbers $\lambda_j(f,x)$ are the Lyapunov exponents of $x$. We say that $f$ is \textit{non-uniformly hyperbolic} if the set of points with non-zero Lyapunov exponents has full measure.

If $f\in B^{r}_{*}(M)$, the Oseledets Theorem can be applied and because of the ergodicity, the functions $k$ and $\lambda_j$ are constants almost everywhere. Moreover, the Oseledets splitting is a measurable refinement of the original splitting and we can consider the Lyapunov exponents of $E^c$. They are called \textit{center Lyapunov exponents} and will be denoted by $\lambda_1^c(f)$ and $\lambda_2^c(f)$.

If $dim\; M=2d$ and $f$ is a symplectic ergodic diffeomorphism, then $$\lambda_j(f)=-\lambda_{2d-j+1}(f) \quad \text{for all} \: 1\leq j\leq d.$$ Therefore, in the symplectic case $\lambda^c_1(f)=\lambda^c_2(f)$ is equivalent to $\lambda^c_1(f)=\lambda^c_2(f)=0.$ This symmetry property has been proved in \cite{BV1}.

Now we give the statement of the main result. 
\begin{teosim} Let $f\in B^{r}_{\omega}(M)$ and assume the set of periodic points of $f$ is non-empty, then $f$ can be $C^r$-approximated by non-uniformly hyperbolic symplectic diffeomorphisms.
\end{teosim}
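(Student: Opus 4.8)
The plan is to argue by contradiction, using the Invariance Principle of Section~3 to pin down rigid structure on the center cocycle along a periodic orbit, and then a localized symplectic perturbation to make that structure impossible. So, suppose $f$ were \emph{not} $C^r$-approximable by non-uniformly hyperbolic symplectic diffeomorphisms. The exponents along $E^s$ and $E^u$ are uniformly bounded away from $0$, and for an ergodic symplectic diffeomorphism with $2$-dimensional center one has $\lambda^c_1=-\lambda^c_2$; hence, for $g\in B^r_\omega(M)$, being non-uniformly hyperbolic is equivalent to $\lambda^c_1(g)\neq\lambda^c_2(g)$, i.e.\ to $\lambda^c_1(g)\neq 0$. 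Since $B^r_\omega(M)$ is open, we would obtain a $C^r$-open neighborhood $\mathcal U\subseteq B^r_\omega(M)$ of $f$ such that $\lambda^c_1(g)=\lambda^c_2(g)=0$ for \emph{every} $g\in\mathcal U$.

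Let $p$ be a periodic point of $f$ of period $\kappa$. The plane $E^c_p$ is $\omega$-symplectic, and $Df^{\kappa}_p|E^c_p$ lies in $Sp(E^c_p,\omega)\cong SL(2,\mathbb R)$; by a standard $C^r$-small perturbation supported near the orbit of $p$ (generated by a compactly supported Hamiltonian) we get $g_0\in\mathcal U$ still having $p$ as a periodic point of period $\kappa$, with $A_0:=Dg_0^{\kappa}|E^c_p$ hyperbolic (real eigenvalues of distinct modulus); write $\xi^{+},\xi^{-}\in\mathbb P(E^c_p)$ for its fixed directions. All defining properties of $B^r_\omega(M)$ survive because they are $C^1$-open. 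Now fix any $g$ in a small neighborhood $\mathcal V\subseteq\mathcal U$ of $g_0$. By Section~3 the center cocycle $Dg|E^c$ admits $s$- and $u$-holonomies, so, since $\lambda^c_1(g)=\lambda^c_2(g)=0$, the Invariance Principle produces a $g$-invariant probability measure on $\mathbb P(E^c)$ projecting to the volume measure on $M$, whose fiberwise disintegration $\{m_{g,x}\}$ is invariant under these holonomies and, by $\alpha$-bunching, has a continuous version. Evaluating at $p$, the probability $m_{g,p}$ on the circle $\mathbb P(E^c_p)$ is invariant under $A_g:=Dg^{\kappa}|E^c_p$ and, because $g$ is accessible, invariant under the projective holonomy $h^{g}_{\sigma}$ of every $su$-loop $\sigma$ based at $p$; equivalently, $m_{g,p}$ is invariant under the subgroup $\Gamma_g$ of $PGL(E^c_p)\cong PGL(2,\mathbb R)$ generated by $A_g$ and all these holonomies.

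Now recall that a subgroup of $PGL(2,\mathbb R)$ preserving a probability measure on $\mathbb P^1$ is elementary: it fixes a point, or an unordered pair of points, or is conjugate into $PO(2)$. For $g$ near $g_0$ the element $A_g$ is hyperbolic, so $\Gamma_g$ cannot be conjugate into the compact group $PO(2)$; hence, if it is elementary, it must fix the pair $\{\xi^{+}_{g},\xi^{-}_{g}\}$ of $A_g$-fixed directions, which depends continuously on $g$. The final step is to produce a further $C^r$-small perturbation $g_1\in\mathcal V$ of $g_0$ supported \emph{away from the orbit of $p$} — so that $A_{g_1}=A_0$ and the fixed pair is $\{\xi^{+},\xi^{-}\}$ — together with an $su$-loop $\sigma$ at $p$ for which $h^{g_1}_{\sigma}$ carries both $\xi^{+}$ and $\xi^{-}$ off the set $\{\xi^{+},\xi^{-}\}$. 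Then $\Gamma_{g_1}$ fixes neither a point nor a pair of points in $\mathbb P^1$, so it is non-elementary and admits no invariant probability measure on $\mathbb P(E^c_p)$, contradicting the existence of $m_{g_1,p}$. This contradiction proves the theorem.

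The hard part will be that last step: perturbing $f$ in the $C^r$ topology — while preserving $\omega$ and not destroying partial hyperbolicity, $\alpha$-pinching, $\alpha$-bunching or accessibility — so as to move a prescribed $su$-loop holonomy of the center cocycle by a controlled amount. Unlike the perturbation at $p$, this is a genuinely global operation: $su$-paths and the holonomies of $Dg|E^c$ are built from the strong foliations $W^s$ and $W^u$, which themselves deform as $g$ varies, and even the accessibility class can change. Controlling these variations is exactly the role of Sections~4 and~5, and the $\alpha$-pinched hypothesis — which makes the $W^s$ and $W^u$ holonomies $\alpha$-Hölder — is what makes such estimates available.
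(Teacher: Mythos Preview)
Your overall strategy matches the paper's, but there is a genuine gap in the reduction step. You assert that a $C^r$-small Hamiltonian perturbation near the orbit of $p$ can arrange $A_0=Dg_0^{\kappa}\vert E^c_p$ to be hyperbolic. This is false in general: a $C^r$-small perturbation of $f$ produces only a small perturbation of $Df^{\kappa}_p\vert E^c_p\in SL(2,\mathbb{R})$, and the elliptic region $\{\vert\mathrm{tr}\vert<2\}$ is open, so if the original linear map is, say, conjugate to the rotation by $\pi/2$ it remains elliptic under every sufficiently small perturbation. The paper deals with this by a dichotomy. After a preliminary generic perturbation (Robinson) the eigenvalues of $Df^{n_p}\vert E^c_p$ are either real of distinct modulus --- the pinching case, handled by Theorem~B --- or non-real of modulus one and not roots of unity. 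In the latter (elliptic) case the paper invokes Newhouse's Proposition~\ref{ne1} to obtain, after a further generic perturbation, a \emph{separate} hyperbolic periodic point $q$; one could then apply Theorem~B to $q$, but in fact the coexistence of the elliptic $p$ and hyperbolic $q$ already contradicts the Invariance Principle directly: $m_q$ is supported on at most two points, so transporting it along an $su$-path forces $m_p$ to have finite support, which is impossible since $m_p$ is invariant under an irrational projective rotation.

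Your ``hard part'' --- perturbing away from the orbit of $p$ so that some $su$-loop holonomy moves $\{\xi^+,\xi^-\}$ off itself --- is precisely Theorem~B, and pointing to Sections~4--5 is correct but conceals the essential mechanism. A $C^r$-perturbation supported in a ball of radius $\delta$ rotates $E^c$ at the center by only $O(\delta^{r-1}\epsilon)$; at the same time it moves the strong foliations, the $su$-path, and the center bundle along the path, and these secondary effects could a priori swamp the intended rotation. The paper's key device is to center the perturbation at a node $z_l$ of an $su$-loop through $p$ chosen with \emph{slow recurrence} (Proposition~\ref{pivot}): the orbits of all nodes stay at distance $\gtrsim c/(1+j^2)$ from $z_l$, so with $\delta_k\sim 1/k^2$ the secondary effects on the $su$-path and on $E^c$ are \emph{exponentially} small in $k$ (Propositions~\ref{su} and \ref{angulos}) while the rotation angle $\beta_k$ is only polynomially small --- hence it dominates. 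A further subtlety you do not mention is that the perturbed $su$-path starting at $p$ need not return to $p$ but only to a nearby point $p_k$; closing it up with controlled error requires Proposition~\ref{caminocont} together with iteration under the hyperbolic return map $Df^{n_p}\vert E^c_p$ (see the argument around $q_k$ and $c_k$ in Section~7), and this is where the pinching hypothesis on $p$ is used a second time.
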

\begin{obs} Observe that the hypothesis of existence of a periodic point in Theorem A can be replace with the hypothesis of $f$ having a periodic compact $C^r$ center leaf. In this case, we can find a symplectic diffeomorphism arbitrarily $C^r$-close to $f$ and having a periodic point. See \cite{XZ}.
\end{obs}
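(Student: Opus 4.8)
The plan is to split the statement into two parts. The first part --- the only genuinely new point --- is that the conclusion of Theorem~A is stable under this weakening of the hypotheses, provided one has the closing statement at hand. Indeed, recall from Section~2 that $B^{r}_{\omega}(M)$ is open in $PH^{r}_{\omega}(M)$ for the $C^{1}$, hence the $C^{r}$, topology: partial hyperbolicity, the $\alpha$-pinched and $\alpha$-bunched conditions, and the equality $\dim E^{c}=2$ are $C^{1}$-robust, while accessibility is $C^{1}$-open in the presence of a $2$-dimensional center bundle by \cite{AV2}. So, given $f\in B^{r}_{\omega}(M)$ with a periodic compact $C^{r}$ center leaf and given $\varepsilon>0$, I would first produce a symplectic $C^{r}$ diffeomorphism $g$ that is $C^{r}$-close to $f$, with $d_{C^{r}}(f,g)<\varepsilon/2$, and has a periodic point; for $\varepsilon$ small enough $g$ still belongs to $B^{r}_{\omega}(M)$, so Theorem~A applies to $g$ and produces a non-uniformly hyperbolic symplectic diffeomorphism within $\varepsilon/2$ of $g$, hence within $\varepsilon$ of $f$. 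Since $\varepsilon$ is arbitrary, $f$ lies in the $C^{r}$-closure of the non-uniformly hyperbolic symplectic diffeomorphisms.

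The second part is the closing statement itself, which is carried out in \cite{XZ}: any symplectic $C^{r}$ diffeomorphism $f$ possessing a periodic compact $C^{r}$ center leaf $L$, say $f^{k}(L)=L$ with $k$ minimal, can be $C^{r}$-approximated by symplectic diffeomorphisms with a periodic point. The first remark is that $E^{c}$ is symplectic: the bundles $E^{s}$ and $E^{u}$ are $\omega$-isotropic, because for $v,w$ in a common stable (resp.\ unstable) fibre one has $\omega(v,w)=\omega(Df^{n}v,Df^{n}w)$ and the right-hand side tends to $0$ as $n\to+\infty$ (resp.\ $n\to-\infty$), and a standard argument using the dominated splitting then identifies $E^{c}$ with the $\omega$-orthogonal complement of the symplectic subbundle $E^{s}\oplus E^{u}$. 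Consequently $\omega$ restricts to an area form on $L$, and $h:=f^{k}|_{L}$ is an area-preserving $C^{r}$ diffeomorphism of the closed orientable surface $L$.

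The next step would be to create a periodic point inside $L$ and then carry the perturbation over to $M$. By classical surface dynamics there is a $C^{r}$-small area-preserving diffeomorphism $\psi$ of $L$, isotopic to the identity, such that $\psi\circ h$ has a periodic point --- when the Euler characteristic of $L$ is non-zero one may take $\psi=\mathrm{id}$, since $h$ already has a periodic point by Nielsen--Thurston theory, and when $L$ is a torus a $C^{r}$-small translation making the rotation vector rational suffices, by Franks' theorem. To globalize, I would use Weinstein's symplectic neighborhood theorem to identify a tubular neighborhood $V$ of $L$ in $M$ with a neighborhood of the zero section of the symplectic normal bundle $(E^{s}\oplus E^{u})|_{L}$, shrinking $V$ so that it misses the leaves $f(L),\dots,f^{k-1}(L)$ (which are disjoint from $L$), and then extend $\psi$ to a symplectic diffeomorphism $\Phi$ of $M$ supported in $V$, $C^{r}$-close to the identity, with $\Phi|_{L}=\psi$. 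Setting $g:=\Phi\circ f$, one checks that $g$ is symplectic, $C^{r}$-close to $f$, satisfies $g^{j}|_{L}=f^{j}|_{L}$ for $1\le j\le k-1$ and $g^{k}|_{L}=\psi\circ f^{k}|_{L}=\psi\circ h$; hence $g$ has a periodic point.

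The hard part is this last globalization step, which is exactly why the closing statement requires the work of \cite{XZ}: one must perturb $f$ on all of $M$, not merely its restriction to the surface $L$, keeping the perturbation symplectic and $C^{r}$-small, keeping the finite orbit of $L$ invariant, and arranging that the restriction to $L$ is precisely the prescribed area-preserving map --- which forces one to control the normal directions of the extension and to choose the tubular neighborhood disjoint from the remaining leaves of the orbit of $L$. By contrast, the first part above is elementary, using nothing beyond the $C^{r}$-openness of $B^{r}_{\omega}(M)$.
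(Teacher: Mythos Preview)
Your proposal is correct and matches the paper's approach: the paper does not actually prove this remark, it merely cites \cite{XZ} for the closing step and relies implicitly on the $C^{r}$-openness of $B^{r}_{\omega}(M)$ recorded in Section~2, both of which you have made explicit. Your second part goes beyond the paper by sketching the content of \cite{XZ}; this is reasonable and in the right spirit, though the details of the globalization (and, on the torus, the passage from rational rotation vector to an actual periodic orbit) are exactly what \cite{XZ} is being invoked to supply.
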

The proof of Theorem A relies in two principal cases determined by the periodic point being hyperbolic or elliptic. The hyperbolic case has a generalization to the volume-preserving setting with the appropriate modifications in the hypotheses.  
\begin{defn}\label{pinch} Let $f$ be a partially hyperbolic diffeomorphism and $p$ a periodic point with $n_p=per(p)$. We say that $p$ is a \emph{pinching periodic point} if $Df^{n_p}|E^c(p)$ has two real eigenvalues with different norms.
\end{defn}
Recall $*\in \{\mu, \omega\}$.
\begin{teocon} Let $f\in B^{r}_{*}(M)$ and assume $f$ has a pinching periodic point, then $f$ can be $C^r$-approximated by volume-preserving (symplectic) diffeomorphisms whose center Lyapunov exponents are different almost everywhere. In particular, $f$ can be $C^r$-approximated by diffeomorphisms with some center Lyapunov exponent non-zero.
\end{teocon}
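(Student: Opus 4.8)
The plan is to use the Invariance Principle of \cite{ASV} in contrapositive form: once a single $su$-loop based at the pinching periodic point $p$ carries a sufficiently generic holonomy, the two center exponents of the (perturbed) diffeomorphism are forced to be distinct. Throughout, write $F=Df|E^c$ for the two-dimensional linear cocycle over $f$ given by the center derivative, and let $\hat F$ be its projectivization, a continuous cocycle over $f$ on the $\mathbb{RP}^1$-bundle $\mathbb{P}(E^c)$. By Section 3, the $\alpha$-bunched hypothesis guarantees that $F$ admits stable and unstable holonomies, so each $su$-path $\delta$ from $x$ to $y$ carries a well-defined linear holonomy $H_\delta\colon E^c(x)\to E^c(y)$, a concatenation of stable and unstable holonomies of $F$; the $\alpha$-pinched hypothesis makes these holonomies $\alpha$-H\"older. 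Now fix $f\in B^r_*(M)$ with a pinching periodic point $p$, and put $n_p=\mathrm{per}(p)$. If $\lambda_1^c(f)>\lambda_2^c(f)$ there is nothing to prove, so assume $\lambda_1^c(f)=\lambda_2^c(f)$. Then the Invariance Principle applies: $\hat F$ has an invariant probability measure projecting to the invariant volume whose fiberwise disintegration is invariant under the stable and unstable holonomies of $\hat F$, and — using that $f$ is accessible — this disintegration admits a continuous, globally $su$-invariant version $\{m_x\}_{x\in M}$, meaning $(\hat H_\delta)_* m_x=m_y$ for every $su$-path $\delta$ from $x$ to $y$.

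I would then press this against the pinching periodic point. Since $p$ is pinching, $Df^{n_p}|E^c(p)$ is a hyperbolic linear isomorphism of $E^c(p)\cong\mathbb{R}^2$ with two real eigenvalues of distinct modulus; its projectivization on the fiber $\mathbb{P}(E^c(p))$ has exactly two fixed points, the two eigendirections, and the only probabilities it fixes are those supported on that pair. As $m_p$ is fixed by this projectivization of $Df^{n_p}|E^c(p)$, it is atomic with at most two atoms. Pushing $m_p$ forward by holonomies along $su$-paths issuing from $p$ — these are homeomorphisms of the fibers — and invoking accessibility, $m_x$ is atomic with at most two atoms for \emph{every} $x\in M$. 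In particular, for every $su$-loop $\gamma$ based at $p$ the projective holonomy $\hat H_\gamma$ of the fiber over $p$ preserves $m_p$, hence preserves a set of at most two points, hence has a periodic orbit of period at most two. The goal of the perturbation will be to build an $su$-loop at $p$ whose holonomy has no periodic orbit at all.

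The perturbative step is then as follows. Using accessibility, choose $q\ne p$ together with two $su$-paths $\gamma_1,\gamma_2$ from $p$ to $q$, and let $\gamma$ be the $su$-loop at $p$ obtained by following $\gamma_1$ and then $\gamma_2$ backwards, so that $H_\gamma=H_{\gamma_2}^{-1}\circ H_{\gamma_1}$. I would construct a $C^r$-small perturbation $g$ of $f$, within the symplectic (resp.\ volume-preserving) diffeomorphisms and supported in a small ball that meets a single leaf-segment of $\gamma_1$ and is disjoint from $\gamma_2$ and from the orbit of $p$, in such a way that: $g\in B^r_*(M)$, using that partial hyperbolicity and the $\alpha$-pinched and $\alpha$-bunched conditions are $C^1$-open and that accessibility is $C^1$-open in the presence of a two-dimensional center \cite{AV2}; $p$ is still a pinching periodic point of $g$, since $Dg^{n_p}|E^c(p)=Df^{n_p}|E^c(p)$ (the ball avoids the orbit) and pinching is open; and the loop holonomy of $g$ along $\gamma$ is an elliptic projective map with irrational rotation number, in particular fixed- and periodic-point free. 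Applying the first two paragraphs to $g$ in place of $f$: if $\lambda_1^c(g)=\lambda_2^c(g)$ then the Invariance Principle would produce a continuous $su$-invariant disintegration whose value at $p$ is at most two-atomic by pinching yet invariant under the periodic-point-free holonomy $\hat H_\gamma$ of $g$ by transport around $\gamma$ — impossible. Hence $\lambda_1^c(g)\ne\lambda_2^c(g)$, i.e.\ $\lambda_1^c(g)>\lambda_2^c(g)$; as $f$ was approximated arbitrarily well, this is the stated approximation. Finally, $\lambda_1^c(g)>\lambda_2^c(g)$ already forces one center exponent to be nonzero, and in the symplectic case the symmetry $\lambda_1^c(g)=-\lambda_2^c(g)$ of \cite{BV1} yields $\lambda_1^c(g)>0>\lambda_2^c(g)$, which gives the ``in particular'' clause.

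The hard part will be the construction of $g$ in the third step, namely controlling quantitatively how the $su$-path holonomy $\hat H_\gamma$ varies under $C^r$ — not merely $C^1$ — perturbations of the diffeomorphism. The $W^s$- and $W^u$-holonomies, and hence $H_\gamma$, are only $\alpha$-H\"older and depend on $f$ in a genuinely non-smooth way, so the perturbation must be sharp enough to move $\hat H_\gamma$ by a definite, prescribed amount into the elliptic regime with irrational rotation number, yet soft enough to remain $C^r$-small and to keep $g$ inside $B^r_*(M)$ with $p$ still pinching. Setting up the estimates that make this balance possible — that is, describing how $su$-paths and their holonomies depend on the diffeomorphism — is precisely the technical core developed in Sections 4 and 5.
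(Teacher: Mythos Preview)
Your high-level strategy---apply the Invariance Principle in contrapositive, use the pinching point to force the fiberwise measure $m_p$ to be at most two-atomic, then perturb to break the $su$-invariance---is exactly the paper's. But the implementation you sketch has two genuine gaps that the paper's argument is specifically designed to overcome.

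First, the claim that ``the loop holonomy of $g$ along $\gamma$'' is well-defined is false as stated: once you perturb $f$ to $g$, the strong foliations change and $\gamma$ is no longer an $su$-path for $g$. The paper builds, for each $f_k$, a nearby $su$-path $\zeta_k$ (Proposition~\ref{su}), and crucially observes that this new path \emph{does not close up}: it runs from $p$ to some $p_k\neq p$ with $dist(p,p_k)$ only exponentially small. So one cannot speak of a loop holonomy at all. Closing this gap is the reason for the entire apparatus of Sections~5 and~6: one must find a point $c_k\in supp\, m^k_{p_k}$ that is controllably close to $a\in supp\, m^k_p$, and this is obtained by iterating $p_k$ backward under $f_k^{n_p}$ to a point $q_k$ very near $p$, invoking Proposition~\ref{caminocont} and Corollary~\ref{unifbound} to connect $p$ to $q_k$ by an $su$-path with controlled holonomy, and then pushing forward by $\mathbb{P}(F_k^{n_pk})$ using the pinching (Equations~(\ref{frente}) and~(\ref{nodoscerca})). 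This also forces the case split (i)/(ii) in Section~7 and the use of Proposition~\ref{soporte}, none of which appears in your outline.

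Second, your target---making the perturbed loop holonomy elliptic with irrational rotation number---is not achievable by a $C^r$-small perturbation in general. Nothing prevents the original $H_\gamma(f)$ from being hyperbolic (a hyperbolic element of $PGL(2,\mathbb{R})$ happily preserves a two-atom measure supported on its fixed directions), and a small perturbation of a hyperbolic projective map remains hyperbolic, hence still has two fixed points. The paper does not attempt this. Instead it compares the two half-holonomies at the intermediate node $z_k$: Corollary~\ref{holfinal} shows that $h_{\zeta_1^k}(a)$ differs from $R_{\beta_k}^{-1}(h_{\zeta_1}(a))$ by an \emph{exponentially} small error, while $h_{\zeta_2^k}(c_k)$ stays within an exponentially small error of $h_{\zeta_2}(a)$. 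Since $\beta_k\sim\delta_k^{r-1}$ is only \emph{polynomially} small, for large $k$ the three points $h_{\zeta_1^k}(a),\,h_{\zeta_1^k}(b),\,h_{\zeta_2^k}(c_k)$ are pairwise distinct, contradicting $supp\, m^k_{z_k}\subset\{h_{\zeta_1^k}(a),h_{\zeta_1^k}(b)\}$. That polynomial-versus-exponential competition---not an irrationality argument---is the actual mechanism.
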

One of the main tools in our proof is the Invariance Principle. Below we give some preliminaries and state it in the form of \cite{ASV}. 

\subsection{Invariance Principle} 
Let $f$ be a partially hyperbolic diffeomorphism and $\pi: \mathcal{V}\longrightarrow M$ a continuous vector bundle with fiber $N=\mathbb{R}^{k}$ for some $k\geq 2$. A \textit{linear cocycle} over $f:M\longrightarrow M$ is a continuous transformation, $F: \mathcal{V}\longrightarrow \mathcal{V}$, satisfying $\pi \circ F= f\circ \pi$ and acting by linear isomorphisms, $F_{x}: \mathcal{V}_{x}\longrightarrow \mathcal{V}_{f(x)}$, on the fibers. By Fustenberg, Kesten \cite{FK}, the extremal Lyapunov exponents $$\lambda_{+}(F,x)=\lim\limits_{n\rightarrow\infty} \frac{1}{n} \text{log} \left\|F_{x}^{n}\right\| \quad \text{and} \quad \lambda_{-}(F,x)=\lim\limits_{n\rightarrow\infty} \frac{1}{n} \text{log} \left\|(F_{x}^{n})^{-1}\right\|^{-1},$$ exist at $\upsilon$-almost every $x\in M$, relative to any $f$-invariant probability measure $\upsilon$. If $(f,\upsilon)$ is ergodic, then they are constant on a full $\upsilon$-measure set. It is clear that $\lambda_{-}(F,x)\leq \lambda_{+}(F,x)$ whenever they are defined.

The \textit{projective bundle} associated to a vector bundle $\pi: \mathcal{V}\longrightarrow M$ is the continuous fiber bundle $\pi: \mathbb{P}(\mathcal{V}) \longrightarrow M$ whose fibers are the projective quotients of the fibers of $\mathcal{V}$. This is a fiber bundle with smooth leaves modeled on $N=\mathbb{P}(\mathbb{R}^{k})$. 

The \textit{projective cocycle} associated to a linear cocycle $F: \mathcal{V}\longrightarrow \mathcal{V}$ is the smooth cocycle $\mathbb{P}(F): \mathbb{P}(\mathcal{V})\longrightarrow \mathbb{P}(\mathcal{V}) $ whose action $\mathbb{P}(F_x): \mathbb{P}(\mathcal{V}_{x})\longrightarrow \mathbb{P}(\mathcal{V}_{f(x)})$ on the fibers is given by the projectivization of $F_{x}$. 

For every $f$-invariant probability measure $\upsilon$, there exists a $\mathbb{P}(F)$-invariant probability measure $m$ that project down to $\upsilon$. This is true because the projective cocycle $\mathbb{P}(F)$ is continuous and the domain $\mathbb{P}(\mathcal{V})$ is compact. Moreover, the extremal Lyapunov exponents of $\mathbb{P}(F)$ exist and satisfy,
\begin{equation*}
\begin{aligned}
\lambda_{+}(\mathbb{P}(F), x,\xi)\leq \lambda_{+}(F,x) - \lambda_{-}(F,x)& \quad \text{and}\\
&\lambda_{-}(\mathbb{P}(F), x,\xi)\geq \lambda_{-}(F,x) - \lambda_{+}(F,x).
\end{aligned}
\end{equation*}
Let $R>0$ be fixed, then the \textit{local strong-stable leaf} $W^{s}_{loc}(x)$ of a point $x\in M$ is the neighborhood of radius $R$ around $x$ inside $W^{s}(x)$. The \textit{local strong-unstable leaf} is defined analogously. Since we are working in the context of \cite{ASV}, the choice of $R$ here will be the same than in Section 5 of that paper. 
\begin{defn}\label{hol} We call \emph{invariant stable holonomy} for $\mathbb{P}(F)$ a family $h^{s}$ of homeomorphisms $h^{s}_{x,y}: \mathbb{P}(\mathcal{V}_{x})\longrightarrow \mathbb{P}(\mathcal{V}_{y})$, defined for all $x$ and $y$ in the same strong-stable leaf of $f$ and satisfying
\begin{enumerate} [label=\emph{(\alph*)}]
\item $h^{s}_{y,z}\circ h^{s}_{x,y}= h^{s}_{x,z}$ and $h^{s}_{x,x}=Id$;
\item $\mathbb{P}(F_{y})\circ h^{s}_{x,y}= h^{s}_{f(x),f(y)}\circ \mathbb{P}(F_{x})$\; 
\item $(x,y, \xi)\mapsto h^{s}_{x,y}(\xi)$ is continuous when $(x,y)$ varies in the set of pairs of points in the same local strong-stable leaf;
\item there are $C>0$ and $\eta >0$ such that $h^{s}_{x,y}$ is $(C,\eta)$-H\"older continuous for every $x$ and $y$ in the same local strong-stable leaf.
\end{enumerate}
\emph{Invariant unstable holonomy} is defined analogously, for pairs of points in the same strong-unstable leaf.
\end{defn}
Let $m$ be a probability measure in $\mathbb{P}(\mathcal{V})$ and $\upsilon=\pi_{*}m$ be its projection, then there exists a disintegration of $m$ into conditional probabilities $\left\{m_{x} : x\in M \right\}$ along the fibers which is essentially unique, that is, a measurable family of probability measures such that $m_x(\mathbb{P}(\mathcal{V}_x))=1$ for almost every $x\in M$ and $$m(U)=\int m_x(U\cap \mathbb{P}(\mathcal{V}_x)) d\upsilon(x),$$ for every measurable set $U\subset \mathbb{P}(\mathcal{V})$. See \cite{Rok}.
\begin{defn} A disintegration $\left\{m_{x} : x\in M \right\}$ is \emph{s-invariant} if
$$(h^{s}_{x,y})_{*}m_{x}=m_{y} \quad \text{for every} \; x\; \text{and}\; y \; \text{in the same strong-stable leaf.}$$
The definition of \emph{u-invariant} is analogous and we say the disintegration is \emph{bi-invariant} if it is both s-invariant and u-invariant.
\end{defn}
\begin{inv} [Theorem B of \cite{ASV}]  Let $f:M\longrightarrow M$ be a $C^{2}$ partially hyperbolic, volume-preserving, center bunched diffeomorphism and $\mu$ be an invariant probability measure in the Lebesgue class. Let $F$ be a linear cocycle such that $\mathbb{P}(F)$ admits holonomies and suppose that $\lambda_{-}(F,x)=\lambda_{+}(F,x)$ at $\mu$-almost every point. 

Then, every $\mathbb{P}(F)$-invariant probability $m$ on the projective fiber bundle $\mathbb{P}(\mathcal{V})$ with $\pi_{*}m=\mu$ admits a disintegration $\left\{m_{x} : x\in M \right\}$ along the fibers such that 
\begin{enumerate} [label=\emph{(\alph*)}]
\item  the disintegration is bi-invariant over a full measure bi-saturated set $M_{F}\subset M$;
\item  if $f$ is accessible, then $M_{F}=M$ and the conditional probabilities $m_{x}$ depend continuously on the base point $x\in M$, relative to the $\text{weak}^{*}$ topology.
\end{enumerate}
\end{inv}

\subsection{Toy Model}
Given $f\in B^r_{*}(M)$, the linear cocycle $F=Df\vert E^c$ will be called \textit{center derivative cocycle} for $f$. In Section 3, we prove that we can apply the Invariance Principle to this cocycle when $\lambda^c_1(f)=\lambda^c_2(f)$ almost everywhere. For this, we prove the existence of holonomies for $\mathbb{P}(F)$ and study how they vary under the perturbation of the diffeomorphism. The main results are Proposition \ref{continuity} and Corollary \ref{compo}.

We consider the following toy model to explain the main ideas and steps for the proof of Theorem B. These ideas are classical and have already appeared, for example, in \cite{AV1,VY}. 

Suppose $f\in B^r_{*}(M)$, $p$ is a pinching fixed point (Definition \ref{pinch}), there exists $z\in M$ such that $z\in W^{ss}(p)\cap W^{uu}(p)$ and $\lambda^c_1(f)=\lambda^c_2(f)$ almost everywhere. Then, we can apply the Invariance Principle for any $\mathbb{P}(F)$-invariant probability measure $m$ with $\pi_{*}m=\mu$. Therefore, there exists a disintegration $\{m_x : x\in M\}$ such that $(h^s(f))_{*} m_p=m_z$ and $(h^u(f))_{*} m_p=m_z$ where $h^s(f)$ and $h^u(f)$ denote the holonomies along the strong-stable and strong-unstable leaves respectively. Moreover, there exist $a,b\in \mathbb{P}(E^c_p)$ such that $supp\; m_p\subset \{a,b\}$. 

We will make a perturbation supported in a neighborhood of $z$, $B_{\delta}(z)$, which has the property that $f^j(z)\notin B_{\delta}(z)$ for every $j\in \mathbb{Z}\setminus \{0\}$. Since $dim\,E^c=2$, $\mathbb{P}(F)$ is a cocycle of circle diffeomorphisms over $f$ and it makes sense to consider rotations in $\mathbb{P}(E^c_p)$. The perturbation is chosen in order to have $g$ close enough to $f$ and $h^s(g)=R_{\beta}\circ h^s(f)$ and $h^u(g)=h^u(f)$. Here, $R_{\beta}$ denotes a rotation of angle $\beta>0$. This implies that $g$ does not satisfy the Invariance Principle and therefore $\lambda^c_1(g)\neq\lambda^c_2(g)$ at almost every point.

\subsection{Strategy of the proof} 
We extend this argument to the general case when we do not necessarily have a point of homoclinic intersection. 

First we find a $su$-path from $p$ to itself with a special node $z$, which is slowly accumulated by the orbits of all the nodes including its own. This is done in Proposition \ref{pivot}. Next, we construct a sequence of $C^r$-perturbations denoted by $f_k$ and supported in $B_{\delta_k}(z)$. The details are given in Lemma \ref{pert} and Lemma \ref{pertfinal}.

In the second part of Section 4, we study how the $su$-path and the holonomies change under the variation of the diffeomorphism. The main results are Proposition \ref{su} and Proposition \ref{angulos}. In the first one, we define the continuation of the $su$-path for every $f_k$ and estimate the distance between the new nodes and the nodes of the original $su$-path. In the second one, we estimate the angle between the center bundle of $f$ and the center bundle of $f_k$. Finally, we summarized these results in Corollary \ref{holfinal}. The main observation is that the variation in the holonomies is exponentially small in $k$, although the size of the perturbations $\delta_k$ is polynomial in $k$. This will allow us to break the rigidity given by the Invariance Principle.

We are going to suppose that $\lambda^c_1(f_k)=\lambda^c_2(f_k)$ almost everywhere and apply the Invariance Principle for some probability measure $m^k$ with $\pi_{*}m^k=\mu$. This gives a family of disintegrations $\{m^k_{x} : x\in M\}$. In order to conclude the argument we need the functions $x\mapsto m^k_x$ to be equicontinuous. We are not able to prove this property, but the problem is solved using the hyperbolicity of $p$ and the results in Section 5 and 6. In Section 5, we state the theorems from \cite{AV2} and prove Proposition \ref{caminocont}. This Proposition gives some kind of continuity for $su$-paths under the variation of the diffeomorphism. In Section 6, we study the disintegration given by the Invariance Principle for some $\mathbb{P}(F)$-invariant probability measure $m$ with $\vert supp\, m_x\vert=1$. In this particular case, we obtain Proposition \ref{soporte}.

Finally, in Sections 7 we combine all these results to give the proofs of Theorem B and Theorem A. In Section 8, we apply Theorem A to partially hyperbolic diffeomorphisms of the torus. 

\section{Center Derivative Cocycle}

As already mentioned, in order to prove Theorem B we need to be able to apply the Invariance Principle to the center derivative cocycle, $F=Df\vert E^c$, when $\lambda^c_1(f)=\lambda^c_2(f)$ almost everywhere. By the definition of $B^r_{*}(M)$, we only have to prove that $\mathbb{P}(F)$ admits holonomies. Notice that the Lyapunov exponents of $F$ coincide with the center Lyapunov exponents of $f$.

For every $f\in B^r_{*}(M)$, there exists $\alpha>0$ such that $f$ is $\alpha$-pinched. Then, the center bundle $E^c$ is $\alpha$-H\"older and the center derivative cocycle $F$ is a $C^{0, \alpha}$ cocycle. Moreover, since $f$ is $\alpha$-bunched, it is enough to apply the results in Section 3 of \cite{ASV} to prove that $\mathbb{P}(F)$ admits holonomies. However, we provide a new proof that allow us to give estimations about how these holonomies change under the variation of the diffeomorphism. These are new results that we have to prove in order to be able to work in a perturbative way.  

Although the statements are for $f\in B^r_{*}(M)$, the only necessary hypotheses are the $\alpha$-pinched and $\alpha$-bunched conditions. 

Since $M$ is compact, we can define a distance in $TM$ in the following way: For every $x,y\in M$ close enough, denote $\pi_{x,y}:T_{x}M\longrightarrow T_{y}M$ the parallel transport along $\zeta$, where $\zeta$ is the geodesic satisfying $dist(x,y)=\text{length}(\zeta)$. Then, given two points $(x,v)$ and $(y,w)$ in $TM$ define $$d((x,v),(y,w))=dist(x,y) + \left\|\pi_{x,y}(v)-w\right\|.$$

To simplify the notation we are going to write $$d((x,v), (y,w))=d(v,w)\quad \text{and} \quad \pi^n_{x,y}=\pi_{f^n(x),f^n(y)}.$$

Since $f$ is $C^2$, there exists $C_0>0$ such that for every $(x,v),\, (y,w)\in TM$, $$d(Df(x,v), Df(y,w))< C_0\, d(v,w).$$

Let $V$ be a vector space with inner product and let $E_1$ and $E_2$ be subspaces of $V$. Then, define $dist(E_1, E_2)=\max \{\xi_1, \xi_2\}$ where $$\xi_1=\sup\limits_{x\in E_1, \left\|x\right\|=1}\; \; \inf\limits_{y\in E_2} \left\|x-y\right\|,$$ and $\xi_2$ is defined analogously changing the places of $E_1$ and $E_2$. If $P_{E}$ denote the orthogonal projection to the subspace $E$, then $$\inf\limits_{y\in E} \left\|x-y\right\|= \left\|x-P_{E}(x)\right\|.$$ Therefore, $\xi_1=\left\|(Id-P_{E_2})P_{E_1}\right\|$ and we have an analogous identity for $\xi_2$. 

If $x$ and $y$ are close enough, given $E_x$ and $E_y$ subspaces of $T_{x}M$ and $T_{y}M$ respectively, define $$dist(E_x, E_y)=dist(\pi_{x,y}(E_x), E_y)=dist(E_x, \pi_{y,x}(E_y)).$$ 

Since $E^c$ is $\alpha$-H\"older, there exists $C_1>0$ such that $$dist(E^c_x,E^c_y)<C_1\, dist(x,y)^{\alpha}.$$ 

Moreover, the constant $C_1$ can be taken uniform in a $C^2$ neighborhood of $f$. See for example, \cite{W}.

The next Proposition proves the existence of a family of maps for $F$, $H^s_{x,y}$, with certain properties that will imply that $\mathbb{P}(H^s_{x,y})$ defines an invariant stable holonomy for $\mathbb{P}(F)$. 
\begin{prop}\label{existence} Fix $f\in B^r_{*}(M)$ and denote $F=Df\vert E^c$. Then, for any pair of points $x,y$ in the same leaf of the strong-stable foliation $W^s$, there exist a linear homeomorphism $H^s_{x,y}:E^c_x\to E^c_y$ satisfying: 
\begin{enumerate} [label=\emph{(\alph*)}]
\item $F_{y}\circ H^{s}_{x,y}= H^{s}_{f(x),f(y)}\circ F_{x}$ and
\item $H^{s}_{y,z}\circ H^{s}_{x,y}= H^{s}_{x,z}$ and $H^{s}_{x,x}=Id$.
\end{enumerate}
\end{prop}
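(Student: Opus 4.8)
The plan is to construct $H^s_{x,y}$ as a limit of corrected iterates of the derivative cocycle, following the classical Hirsch--Pugh--Shub scheme adapted to the $\alpha$-pinched, $\alpha$-bunched setting. For $x,y$ in the same strong-stable leaf, the distance $\mathrm{dist}(f^n(x),f^n(y))$ contracts at a rate governed by $\nu$, while the center behavior is controlled by $\gamma,\widehat\gamma$. The natural candidate is
\begin{equation*}
H^s_{x,y} = \lim_{n\to\infty} (F^n_y)^{-1}\circ \Pi^c_{f^n(x),f^n(y)} \circ F^n_x,
\end{equation*}
where $\Pi^c_{f^n(x),f^n(y)}:E^c_{f^n(x)}\to E^c_{f^n(y)}$ is the ``projected parallel transport'', i.e. $\pi^n_{x,y}$ followed by the orthogonal projection onto $E^c_{f^n(y)}$ (which is an isomorphism for $n$ large since the two center spaces are close). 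First I would show this map is well defined by proving the sequence is Cauchy: the difference between consecutive terms, measured in operator norm, is estimated by combining three ingredients — the $\alpha$-Hölder bound $\mathrm{dist}(E^c_{f^n(x)},E^c_{f^n(y)}) \le C_1\,\mathrm{dist}(f^n(x),f^n(y))^\alpha$, the exponential contraction $\mathrm{dist}(f^n(x),f^n(y))\lesssim \nu^n\,\mathrm{dist}(x,y)$ along stable leaves, and the norm control of $(F^n_y)^{-1}\circ(\cdot)\circ F^n_x$, which expands by at most $(\gamma\widehat\gamma)^{-n}$ (using that in $E^c$ the norm of $F^n$ is between $\gamma^n$-ish and $\widehat\gamma^{-n}$-ish, so conjugation costs at most $(\gamma\widehat\gamma)^{-n}$ up to lower-order factors). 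The product of these is dominated by a summable series precisely because $\nu^\alpha < \gamma\widehat\gamma$, i.e. the $\alpha$-bunching hypothesis; this is the crucial point where the hypothesis is used.

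Once the limit exists, properties (a) and (b) follow formally. For the equivariance (a), I would use that the $n$-th term of the defining sequence for $H^s_{f(x),f(y)}$ composed with $F_x$ equals $F_y$ composed with the $(n{+}1)$-th term for $H^s_{x,y}$ (after reindexing), so passing to the limit gives $F_y\circ H^s_{x,y} = H^s_{f(x),f(y)}\circ F_x$. For the cocycle relation (b) over a triple $x,y,z$ in one stable leaf, I would note that $\Pi^c$ is not literally transitive (parallel transport composed with projections is not), but the \emph{limit} is: both $H^s_{y,z}\circ H^s_{x,y}$ and $H^s_{x,z}$ can be written as limits of $(F^n_z)^{-1}\circ(\text{a map }E^c_{f^n(x)}\to E^c_{f^n(z)})\circ F^n_x$ where the two middle maps differ by something that is $O(\mathrm{dist}(f^n(x),f^n(z))^\alpha) \to 0$ faster than the conjugation expands, by the same bunching estimate; hence the limits agree. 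That $H^s_{x,x}=\mathrm{Id}$ is immediate since every term is the identity when $x=y$. Invertibility of $H^s_{x,y}$ comes for free from $H^s_{y,x}\circ H^s_{x,y} = H^s_{x,x} = \mathrm{Id}$, and continuity/Hölder dependence — though not asked for in this Proposition — would be handled the same way, which is presumably why the author isolates just the algebraic properties here.

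The main obstacle I anticipate is making the convergence estimate fully rigorous with the \emph{non-constant} functions $\chi,\nu,\gamma,\widehat\gamma$ rather than constants: one must pass to adapted pointwise (Finsler) estimates, or replace $\nu^n$ and $(\gamma\widehat\gamma)^{-n}$ by products $\prod_{j=0}^{n-1}\nu(f^j(\cdot))$ etc., and check that the pointwise inequality $\nu^\alpha < \gamma\widehat\gamma$ still yields a uniformly summable bound after taking $\log$ and using compactness of $M$ (so that $\sup(\alpha\log\nu - \log\gamma - \log\widehat\gamma) < 0$). A secondary technical nuisance is controlling the projection $\Pi^c$: one needs $E^c_{f^n(x)}$ and $E^c_{f^n(y)}$ close enough for the orthogonal projection between them to be an isomorphism with norm close to $1$, which is guaranteed for $n\ge n_0$ by the Hölder estimate, and then one only needs the tail of the sequence; the finitely many initial terms are irrelevant to convergence and the limit can be defined starting from any large $n_0$, then extended by equivariance. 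Beyond these points the argument is the standard graph-transform/telescoping computation and I would not belabor it.
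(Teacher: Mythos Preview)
Your proposal is correct and follows essentially the same route as the paper: define $A_n(x,y)=(F^n_y)^{-1}\circ P_{E^c_{f^n(y)}}\circ\pi^n_{x,y}\circ F^n_x$, prove it is Cauchy by a telescoping estimate driven by the $\alpha$-bunching inequality $\nu^\alpha<\gamma\widehat\gamma$, and read off (a) and (b) from the reindexing identity $A_{n+j}(x,y)=F^j(y)^{-1}\circ A_n(f^j(x),f^j(y))\circ F^j(x)$. The ``main obstacle'' you flag with non-constant $\nu,\gamma,\widehat\gamma$ is exactly the point the paper isolates as a separate lemma (Lemma~3.1 of \cite{ASV}), which packages the pointwise bunching into the single product bound $\prod_{j<n}\|F(f^j(y))\|\,\|F(f^j(z))^{-1}\|\le \widehat C\,\nu^n(x)^{-\alpha\varsigma}$ for some $\varsigma<1$; otherwise the arguments coincide.
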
 
\textit{Proof.} Fix $f\in B^r_{*}(M)$ and let $F=Df\vert E^c$. For $n\in \mathbb{N}$, $$F^n(x)=F(f^{n-1}(x))\circ \cdots \circ F(x),$$ and for any continuous function $\tau:M\longrightarrow \mathbb{R}^{+}$, $$\tau^n(x)=\tau(x)\tau(f(x)) \cdots \tau(f^{n-1}(x)).$$ Here, we are going to consider the continuous functions given by Equation (\ref{ph}).

If $x,y\in M$ with $y\in W^s_{loc}(x)$, then for every $n\in \mathbb{N}$ define $$A_n(x,y)=F^n(y)^{-1}\circ P_{E^c(f^n(y))} \circ \pi^n_{x,y} \circ F^n(x),$$ and $$A_0(x,y)= P_{E^c(y)} \circ \pi_{x,y}\vert E^c(x).$$

We are going to prove that this sequence is a Cauchy sequence and define $$H^s_{x,y}=\lim\limits_{n\rightarrow \infty} A_n(x,y).$$

Observe that $$A_{n+j}(x,y)=F^j(y)\circ A_n(f^j(x),f^j(y)) \circ F^j(x).$$ Therefore, once we have proved that the limit above exists, we can use this identity to demonstrate the general case when $y\in W^s(x)$ and also to conclude property (a). The proof of (b) is an easy calculation from this formula. 

In order to prove that $A_n(x,y)$ is a Cauchy sequence we find constants $C_2>0$ and $\varsigma<1$ such that $$\left\|A_{n+1}(x,y)-A_n(x,y)\right\|\leq C_2\, \nu^n(x)^{(1-\varsigma)\alpha}\,dist(x,y)^{\alpha},$$ for every $n\in \mathbb{N}\cup \{0\}$. This is a consequence of the definitions at the beginning of this Section and the following Lemma. 
\begin{lema}[Lemma 3.1, \cite{ASV}]\label{31} There exist $\widehat{C}>0$ and $\varsigma<1$ such that for all $x\in M$, $y,z \in W^s_{loc}(x)$ and $n\geq 1$, $$\prod_{j=0}^{n-1}\left\|F(f^j(y))\right\|\left\|F(f^j(z))^{-1}\right\|\leq \widehat{C}\, \nu^n(x)^{-\alpha \, \varsigma}.$$
\end{lema}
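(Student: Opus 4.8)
\textit{Proof proposal.} The plan is to compare the fiberwise product with the partially hyperbolic data and absorb the discrepancy coming from the three distinct base points $f^j(x)$, $f^j(y)$, $f^j(z)$ using compactness of $M$ together with the strictness of the $\alpha$-bunching inequality. First I would note that (\ref{ph}) gives, for every $w\in M$, $\|F(w)\|\le\widehat\gamma(w)^{-1}$ (since $\|Df_w(v)\|<\widehat\gamma(w)^{-1}$ for unit $v\in E^c(w)$) and $\|F(w)^{-1}\|\le\gamma(w)^{-1}$ (since $\|Df_w(v)\|>\gamma(w)$). Hence
$$\prod_{j=0}^{n-1}\|F(f^j(y))\|\,\|F(f^j(z))^{-1}\|\ \le\ \prod_{j=0}^{n-1}\widehat\gamma(f^j(y))^{-1}\gamma(f^j(z))^{-1},$$
and it suffices to bound the right-hand side by $\widehat C\,\nu^n(x)^{-\alpha\varsigma}$.

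Next I would extract the exponent $\varsigma<1$. The $\alpha$-bunched condition reads $\nu^\alpha<\gamma\widehat\gamma$, a strict inequality between continuous positive functions on the compact manifold $M$; moreover $\bar\nu:=\sup_M\nu<1$ and $\inf_M\nu>0$. Writing $\nu^{\alpha\varsigma}=\nu^\alpha\cdot\nu^{-\alpha(1-\varsigma)}$ and letting $\varsigma\uparrow1$, the factor $\nu^{-\alpha(1-\varsigma)}$ tends uniformly to $1$, so there is $\varsigma<1$ with $\nu(w)^{\alpha\varsigma}<\gamma(w)\widehat\gamma(w)$ for all $w$, with a uniform gap. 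By uniform continuity this persists under small perturbations of the arguments: there exist such a $\varsigma<1$ and a $\rho>0$ so that
$$\widehat\gamma(w_1)^{-1}\gamma(w_2)^{-1}<\nu(w_3)^{-\alpha\varsigma}\qquad\text{whenever }dist(w_1,w_3)<\rho\ \text{and}\ dist(w_2,w_3)<\rho.$$

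Finally I would use contraction of the local stable leaves. Since $\bar\nu<1$, for $y,z\in W^s_{loc}(x)$ (radius $R$) one has $dist(f^j(y),f^j(x))\le\bar\nu^{\,j}R$ and likewise for $z$, so there is $N\in\mathbb{N}$, independent of $x,y,z$, with $\bar\nu^{\,N}R<\rho$. For $n\le N$ the product is bounded by a constant depending only on $N$, $\min_M\gamma$ and $\min_M\widehat\gamma$, and since $\nu^n(x)^{-\alpha\varsigma}\ge1$ this is already of the claimed form. For $n>N$ I split $\prod_{j=0}^{n-1}=\big(\prod_{j=0}^{N-1}\big)\big(\prod_{j=N}^{n-1}\big)$: the first factor is bounded by the same constant, while for $j\ge N$ the estimate of the previous paragraph applies with $w_3=f^j(x)$, $w_1=f^j(y)$, $w_2=f^j(z)$, so the second factor is at most $\prod_{j=N}^{n-1}\nu(f^j(x))^{-\alpha\varsigma}\le\prod_{j=0}^{n-1}\nu(f^j(x))^{-\alpha\varsigma}=\nu^n(x)^{-\alpha\varsigma}$, using $\nu<1$. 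Taking $\widehat C$ to be this constant completes the argument. The one genuinely delicate point is the middle step — producing an exponent strictly below $1$, which is exactly where the strictness and the compactness-induced uniformity of the bunching hypothesis are essential — together with the bookkeeping that keeps the finitely many transient terms $j<N$ (where $f^j(y)$ need not be $\rho$-close to $f^j(x)$) absorbed into a single constant independent of $n$.
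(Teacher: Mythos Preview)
Your argument is correct. The paper itself does not prove this lemma; it simply quotes it as Lemma~3.1 of \cite{ASV} and uses it as a black box inside the proof of Proposition~\ref{existence}. So there is no ``paper's own proof'' to compare against here.

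That said, the proof you give is the natural one and matches what \cite{ASV} does. The three ingredients you isolate are exactly the right ones: the crude bounds $\|F(w)\|\le\widehat\gamma(w)^{-1}$ and $\|F(w)^{-1}\|\le\gamma(w)^{-1}$ from (\ref{ph}); the observation that strictness of the $\alpha$-bunching inequality $\nu^\alpha<\gamma\widehat\gamma$ on the compact $M$ lets you downgrade the exponent to some $\alpha\varsigma$ with $\varsigma<1$ while keeping a uniform gap (and hence, by uniform continuity, a version with three nearby base points); and the use of exponential contraction along $W^s_{loc}$ to force $f^j(y),f^j(z)$ into a fixed $\rho$-neighborhood of $f^j(x)$ after a time $N$ independent of $x,y,z$, so that only finitely many ``bad'' factors remain and get absorbed into $\widehat C$. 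Your bookkeeping in the split $\prod_{j=0}^{N-1}\cdot\prod_{j=N}^{n-1}$ and the use of $\nu<1$ to enlarge $\prod_{j=N}^{n-1}\nu(f^j(x))^{-\alpha\varsigma}$ to the full $\nu^n(x)^{-\alpha\varsigma}$ are both clean. One cosmetic remark: the contraction estimate $dist(f^j(y),f^j(x))\le\bar\nu^{\,j}R$ is really along the leaf; to pass to the ambient distance you implicitly use that the stable lamination has uniformly continuous tangent, which is fine here and only changes constants.
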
 
\begin{obs}\label{bound} 
For every $y\in W^s_{loc}(x)$ we have, $$H^s_{x,y}=\sum_{j=0}^{\infty} (A_{n+1}(x,y)-A_n(x,y)) + A_0(x,y).$$
Let $C_3>0$ be a bound for $\sum_{j=0}^{\infty} \nu^n(x)^{(1-\varsigma)\alpha}$ and let $C=C_2\, C_3$, then 
\begin{equation*}
\begin{aligned}
\left\|H^s_{x,y}- A_0(x,y)\right\|&\leq \sum_{j=0}^{\infty} \left\|A_{n+1}(x,y)-A_n(x,y)\right\| \\
                                  &\leq C_2\, dist(x,y)^{\alpha}\, \sum_{j=0}^{\infty}\nu^n(x)^{(1-\varsigma)\alpha}\\
																	&\leq C\, dist(x,y)^{\alpha}.
\end{aligned} 
\end{equation*}
Then, we have proved that there exists $C>0$ such that for every $y\in W^s_{loc}(x)$, $$\left\|H^s_{x,y}\right\|\leq 1 + C\, dist(x,y)^{\alpha}.$$ Moreover, the constant $C$ depends only on $f$. 
\end{obs}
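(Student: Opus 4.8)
The plan is to build $H^s_{x,y}$ as the limit of a renormalization procedure, following the scheme set up above. For $y\in W^s_{loc}(x)$ one compares the center fibers over $x$ and $y$ after pushing them forward by $f^n$, projecting orthogonally onto the (nearby) center fiber, and pulling back; that is, one studies the linear maps $A_n(x,y)=F^n(y)^{-1}\circ P_{E^c(f^n(y))}\circ\pi^n_{x,y}\circ F^n(x)$ on $E^c(x)$ and sets $H^s_{x,y}=\lim_n A_n(x,y)$. Everything reduces to showing that $(A_n(x,y))_n$ is a Cauchy sequence, with the quantitative bound $\|A_{n+1}(x,y)-A_n(x,y)\|\leq C_2\,\nu^n(x)^{(1-\varsigma)\alpha}\,dist(x,y)^{\alpha}$ stated above; granting it, the limit exists, is linear, obeys $\|H^s_{x,y}-A_0(x,y)\|\leq C\,dist(x,y)^{\alpha}$ as in Remark \ref{bound}, and properties (a)--(b) together with the homeomorphism property follow from the structure of the construction.

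To prove the Cauchy estimate I would use $F^{n+1}(x)=F(f^n(x))\circ F^n(x)$ and $F^{n+1}(y)^{-1}=F^n(y)^{-1}\circ F(f^n(y))^{-1}$ to write, with all maps restricted to $E^c$,
\[
A_{n+1}(x,y)-A_n(x,y)=F^n(y)^{-1}\Big(F(f^n(y))^{-1}P_{E^c(f^{n+1}(y))}\,\pi^{n+1}_{x,y}\,F(f^n(x))-P_{E^c(f^n(y))}\,\pi^n_{x,y}\Big)F^n(x).
\]
Composing the bracketed operator (which sends $E^c(f^n(x))$ to $E^c(f^n(y))$) on the left with $F(f^n(y))$ turns the comparison into $P_{E^c(f^{n+1}(y))}\,\pi^{n+1}_{x,y}\,F(f^n(x))v-Df_{f^n(y)}\,P_{E^c(f^n(y))}\,\pi^n_{x,y}\,v$ for unit $v\in E^c(f^n(x))$; comparing both terms to $Df_{f^n(y)}\,\pi^n_{x,y}\,v$ shows this is $O\big(dist(f^n(x),f^n(y))^{\alpha}\big)$, using the $C^2$-estimate $d(Df(x,v),Df(y,w))<C_0\,d(v,w)$, the $\alpha$-H\"older continuity of $E^c$ (constant $C_1$), and the uniform bound on $Df\vert E^c$. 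One then controls the cocycle factors by $\|F^n(y)^{-1}\|\,\|F^n(x)\|\leq\widehat C\,\nu^n(x)^{-\alpha\varsigma}$ via Lemma \ref{31} (applied with the two points of $W^s_{loc}(x)$ taken to be $x$ and $y$), while $\|F(f^n(y))^{-1}\|$ is bounded uniformly, and the contraction along $W^s$ gives $dist(f^n(x),f^n(y))^{\alpha}\leq\text{const}\cdot\nu^n(x)^{\alpha}\,dist(x,y)^{\alpha}$ from Equation (\ref{ph}). Multiplying $\nu^n(x)^{-\alpha\varsigma}$ by $\nu^n(x)^{\alpha}$ produces $\nu^n(x)^{(1-\varsigma)\alpha}$, as claimed.

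Since $\nu<1$ uniformly on $M$ and $(1-\varsigma)\alpha>0$, the series $\sum_n\nu^n(x)^{(1-\varsigma)\alpha}$ converges, uniformly in $x$, so $A_n(x,y)$ converges to a linear map $H^s_{x,y}$. To define $H^s_{x,y}$ for an arbitrary pair $y\in W^s(x)$, I would use the conjugation identity $A_{n+j}(x,y)=F^j(y)^{-1}\circ A_n(f^j(x),f^j(y))\circ F^j(x)$ noted above: for $j$ large enough $f^j(y)\in W^s_{loc}(f^j(x))$, so the right-hand side converges as $n\to\infty$, and one sets $H^s_{x,y}=F^j(y)^{-1}\circ H^s_{f^j(x),f^j(y)}\circ F^j(x)$, which is readily checked to be independent of the large $j$ used. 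Property (a), $F_y\circ H^s_{x,y}=H^s_{f(x),f(y)}\circ F_x$, is then immediate, as it merely shifts the index $j$, and property (b) together with $H^s_{x,x}=Id$ is a direct bookkeeping computation from the definition of $A_n$; in particular, taking $z=x$ in (b) gives $H^s_{y,x}\circ H^s_{x,y}=Id$, so each $H^s_{x,y}$ is a linear homeomorphism.

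I expect the main obstacle to be the term-by-term estimate of the second paragraph, where one has to disentangle three competing effects along the orbits of $x$ and $y$ --- the possible exponential growth of $\|F^n\|\,\|(F^n)^{-1}\|$, the $\alpha$-H\"older oscillation of the center bundle along the orbit, and the exponential contraction of $dist(f^n(x),f^n(y))$ --- and check that their product is summable with a constant depending only on $f$ (indeed uniform on a $C^2$-neighborhood, as is needed later for the perturbative arguments). The growth of the cocycle is the delicate point: it is controlled not by the pointwise rates $\gamma,\widehat\gamma$ but uniformly through Lemma \ref{31}, and this is precisely where the $\alpha$-bunching hypothesis $\nu^{\alpha}<\gamma\widehat\gamma$ enters, guaranteeing the exponent $\varsigma<1$ and hence a strictly positive $(1-\varsigma)\alpha$. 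The non-local extension and the verification of (a)--(b) are then routine.
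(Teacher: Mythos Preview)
Your proposal is correct and follows essentially the same approach as the paper: the Remark is an immediate consequence of the telescoping identity $H^s_{x,y}=A_0(x,y)+\sum_n(A_{n+1}-A_n)$ together with the Cauchy bound $\|A_{n+1}-A_n\|\leq C_2\,\nu^n(x)^{(1-\varsigma)\alpha}dist(x,y)^{\alpha}$ established in Proposition~\ref{existence}, and the final inequality uses $\|A_0(x,y)\|\leq 1$. You in fact supply more detail on the Cauchy estimate than the paper (which simply invokes Lemma~\ref{31} and the preceding definitions), but the ingredients and their assembly are the same.
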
 
Observe that all the estimations in the Proposition and the Remark can be taken uniform in a $C^2$ neighborhood of $f$. For every $f\in B^r_{*}(M)$, we fix this $C^2$ neighborhood and from now on, every $g$ $C^2$-close to $f$ will be understood to belong to it.

Now that we have proved the existence of $H^s_{x,y}$, we can define $h^s_{x,y}=\mathbb{P}(H^s_{x,y})$. In order to show that the family $h^s_{x,y}$ is an invariant stable holonomy for $\mathbb{P}(F)$, we need to prove property (c) in Definition \ref{hol}. 

Using the explicit formula for $H^s_{x,y}$ given in the proof of Proposition \ref{existence}, we are able to prove a stronger result, that will imply (c), but provides also an estimation about how $H^s_{x,y}$ changes under the variation of the diffeomorphism.
\begin{prop}\label{continuity} Fix $f\in B^r_{*}(M)$, $x\in M$, $y\in W^s_{loc}(x,f)$ and $a\in E^c(x,f)$. For every $\epsilon>0$ there exist $\delta>0$ and a neighborhood of $f$ in the $C^2$ topology, $\mathcal{V}(f)$, such that for every $g\in \mathcal{V}(f)$, every $w,z\in M$ with $w\in W^s_{loc}(z,g)$, $dist(x,z)<\delta$ and $dist(y,w)<\delta$ and every $b\in E^c(z,g)$ with $d(a,b)<\delta$, we have
$$d(H^s_{x,y}(f)(a), H^s_{z,w}(g)(b))< \epsilon.$$
\end{prop}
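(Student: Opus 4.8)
The plan is to run the continuity estimate directly on the explicit series representation of $H^s_{x,y}$ obtained in the proof of Proposition~\ref{existence}. Recall from there that $H^s_{x,y}(f)=A_0(x,y)+\sum_{n\ge 0}\bigl(A_{n+1}(x,y)-A_n(x,y)\bigr)$ and that the partial sums telescope, so that for every $N$,
$$H^s_{x,y}(f)=A_N(x,y)(f)+R_N(x,y)(f),\qquad R_N(x,y)(f):=\sum_{n\ge N}\bigl(A_{n+1}(x,y)-A_n(x,y)\bigr).$$
Since $\mathcal V(f)$ will be taken inside the fixed $C^2$-neighborhood, and $\alpha$-pinching and $\alpha$-bunching are $C^1$-open, the same decomposition is available for $g$: $H^s_{z,w}(g)=A_N(z,w)(g)+R_N(z,w)(g)$. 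Now $H^s_{x,y}(f)(a)\in T_yM$, $H^s_{z,w}(g)(b)\in T_wM$, $dist(y,w)<\delta$, and parallel transport is an isometry, so
$$d\bigl(H^s_{x,y}(f)(a),H^s_{z,w}(g)(b)\bigr)\le \delta+\bigl\|\pi_{y,w}(A_N(x,y)(f)(a))-A_N(z,w)(g)(b)\bigr\|+\bigl\|R_N(x,y)(f)(a)\bigr\|+\bigl\|R_N(z,w)(g)(b)\bigr\|.$$
The strategy is to choose $N=N(\epsilon)$ first so that the two tail terms are each $<\epsilon/4$, and then $\delta$ and $\mathcal V(f)$ so that $\delta<\epsilon/4$ and the finite term is $<\epsilon/4$.

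For the tails I would invoke the estimate proved inside Proposition~\ref{existence}, $\|A_{n+1}(x,y)-A_n(x,y)\|\le C_2\,\nu^n(x)^{(1-\varsigma)\alpha}dist(x,y)^{\alpha}$, which — as observed right after Remark~\ref{bound} — holds with $C_2$, $\varsigma$ and a contraction bound $\theta<1$ for all the relevant $\nu$-functions uniform over the fixed $C^2$-neighborhood. Hence $\|R_N(x,y)(f)(a)\|\le \|a\|\,C_2\,dist(x,y)^{\alpha}\sum_{n\ge N}\theta^{n(1-\varsigma)\alpha}$ and, since $dist(z,w)\le dist(x,y)+2\delta$ and $\|b\|\le\|a\|+\delta$ are bounded, an analogous bound holds for $\|R_N(z,w)(g)(b)\|$ with constants independent of $g,z,w,b$. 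As $\sum_{n\ge N}\theta^{n(1-\varsigma)\alpha}\to 0$, a fixed $N$ depending only on $\epsilon$ (and on $f,x,y,a$) makes both tail terms $<\epsilon/4$ for all admissible data.

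With $N$ fixed, $A_N(x,y)(f)$ and $A_N(z,w)(g)$ are compositions of a \emph{fixed finite} number of maps: $N$ restrictions of $Df$ (resp.\ $Dg$) to $E^c$ along the first $N$ iterates of the base points, one orthogonal projection onto $E^c$, the parallel transport $\pi^N$, and $N$ restrictions of the inverse derivative to $E^c$. Each factor depends jointly continuously on the diffeomorphism in the $C^1$ (hence $C^2$) topology and on the base point, using that $g\mapsto g^{j}$ is continuous for each fixed $j$, that $Dg$ varies continuously with $g$, that parallel transport is a fixed smooth object, and that the center bundle $E^c_g$ depends $C^0$-continuously on $g$ (the standard continuity of dominated splittings, already used above for the uniform constant $C_1$). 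A composition of finitely many jointly continuous maps is jointly continuous, and evaluation at a vector is continuous; hence there exist $\delta>0$ and a $C^2$-neighborhood $\mathcal V(f)$ such that $dist(x,z)<\delta$, $dist(y,w)<\delta$, $d(a,b)<\delta$ (the last also forcing $b$ close to $\pi_{x,z}(a)$ and $\|b\|$ close to $\|a\|$) imply $\|\pi_{y,w}(A_N(x,y)(f)(a))-A_N(z,w)(g)(b)\|<\epsilon/4$. Shrinking $\delta$ so that also $\delta<\epsilon/4$ finishes the proof.

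The one real issue is uniformity: the truncation level $N$ must be fixed \emph{before} the perturbation $g$ and the points $z,w$ are specified, so one genuinely needs $C_2$, $\varsigma$ and the contraction rate of the $\nu$-functions to be uniform over the whole $C^2$-neighborhood — precisely the uniformity recorded after Remark~\ref{bound}. Once the estimate is reduced to the finite composition $A_N$, the joint continuity is essentially formal, its only substantive ingredient being the $C^0$-continuous dependence of $E^c$ on the diffeomorphism.
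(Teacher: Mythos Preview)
Your proof is correct and follows essentially the same approach as the paper: truncate the series defining $H^s$ at a level $N$ chosen so that the uniform tail bound $\|H^s-A_N\|\le C\,\nu^N(\,\cdot\,)^{\alpha(1-\varsigma)}$ (valid over the whole $C^2$-neighborhood) makes both remainders small, and then use joint continuity of the finite composition $A_N(f,x,y)$ in $(f,x,y)$ and the input vector. The paper's write-up is terser but records the same two steps, and even lists the same ingredients---$dist(x,z)$, $dist(y,w)$, $d(a,b)$, the angle between the center bundles for $f$ and $g$, and the differences $\|Df-Dg\|$, $\|Df^{-1}-Dg^{-1}\|$ along the first $N$ iterates---as controlling the finite term.
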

\begin{proof}
Let $F=Df\vert E^c(f)$. Similar estimations that the ones in Remark \ref{bound}, provides a $C>0$ and $\varsigma<1$ such that for any $n\geq1$, $$\left\|H^s_{x,y}(f)- A_n(f,x,y)\right\|\leq C\; \nu^n(x)^{\alpha(1-\varsigma)}$$ and $$\left\|H^s_{z,w}(g)- A_n(g,z,w)\right\|\leq C\; \nu^n(z)^{\alpha(1-\varsigma)}.$$ Then, the Proposition is a consequence of the continuity of $$A_n(f,x,y)=F^n(y)^{-1}\circ P_{E^c(f^n(y))} \circ \pi^n_{x,y} \circ F^n(x),$$ as a function of $(f,x,y)$.

More precisely, the distance $$d(A_n(f,x,y)(a), A_n(g,z,w)(b))$$ can be bounded by an expression that depends on the following terms:
$$dist(x,z), \quad dist(y,w), \quad d(a,b),$$
$$\angle (E^c(x,f), E^c(x,g)), \quad \angle (E^c(y,f), E^c(y,g)),$$ 
$$\left\|Df(f^j(x))-Dg(f^j(x))\right\|\quad \text{and} \quad \left\|Df^{-1}(f^{j+1}(y))-Dg^{-1}(f^{j+1}(y))\right\|,$$ for $j\in\{0,...,n\}.$
\end{proof}

By Proposition \ref{existence} and Proposition \ref{continuity}, the family $h^s_{x,y}=\mathbb{P}(H^s_{x,y})$ is an invariant stable holonomy for $\mathbb{P}(F)$. 
%We are going to refer to both $h^s_{x,y}$ and $H^s_{x,y}$ as invariant stable holonomies.

Observe that Proposition \ref{continuity} implies the continuity of invariant stable holonomies in compact parts of the strong-stable foliation. That is, the application $$(f,x,y)\mapsto H^s_{x,y}(f),$$ is continuous on $W^s_n(f)=\{(g,x,y) : g\in \mathcal{V}(f)\  \text{and}\  g^n(y)\in W^s_{loc}(g^n(x))\}$, for every $n\geq 1$.

There are analogous propositions and properties for the invariant unstable holonomy, $h^u_{x,y}$. Locally, it will be defined by the projectivization of $$H^u_{x,y}=\lim\limits_{n\rightarrow -\infty} F^n(y)^{-1}\circ P_{E^c(f^{n}(y))} \circ \pi^{n}_{x,y} \circ F^{n}(x),$$ where $$F^{-n}(x)=F^{-1}(f^{-n+1}(x))\circ \cdots \circ F^{-1}(x),$$ for every $n\in \mathbb{N}$.

Let $\zeta=[z_0,z_1,..,z_N]$ be a $su$-path for $f$ and denote $H_{z_i}=H^{*}_{z_{i-1},z_i}$ for every $i\in \{1,...,N\}$ with $*\in \{s,u\}$. Then, we consider $H_{\zeta}=H_{z_{N}}\circ \dots H_{z_1}$. The following Corollary gives an estimation about how this holonomy changes under the variation of $f$ and the $su$-path. 
\begin{cor}\label{compo} If $g$ is close enough to $f$, $\zeta_f=[x_0,...,x_N]$ and $\zeta_g=[y_0,...,y_N]$ are $su$-paths for $f$ and $g$ respectively, $a\in E^c(x_0,f)$ and $b\in E^c(y_0,g)$, then $$d(H_{\zeta_f}(a), H_{\zeta_g}(b))\leq \sum_{i=0}^{N-1} \prod_{j=i+2}^{N} \left\|H_{x_j}\right\| \psi(H_{x_{i+1}}) + \prod_{j=1}^{N} \left\|H_{x_j}\right\| d(a,b), $$ where $$\psi(H_{x_{i+1}})=d(H_{x_{i+1}}(a_i), H_{y_{i+1}}(\pi_{x_i,y_i}(a_i)))$$ and $$a_i=H_{x_i}\circ \dots \circ H_{x_1}(a).$$
\end{cor}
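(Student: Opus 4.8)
The plan is to prove the estimate by telescoping, interpolating between $H_{\zeta_f}(a)$ and $H_{\zeta_g}(b)$ through a family of \emph{hybrid} vectors which use the $f$-holonomies along an initial block of the $su$-path and the $g$-holonomies along the complementary block, with a parallel transport inserted at the junction. Keeping the notation of the statement ($a_i=H_{x_i}\circ\cdots\circ H_{x_1}(a)\in E^c(x_i,f)$, $a_0=a$), for $0\le i\le N$ I set
\[
v_i \;=\; H_{y_N}\circ\cdots\circ H_{y_{i+1}}\bigl(\pi_{x_i,y_i}(a_i)\bigr)\ \in\ T_{y_N}M,
\]
where $H_{y_j}$ is the corresponding $g$-holonomy $H^{*}_{y_{j-1},y_j}(g)$, the composition is the identity for $i=N$, and $H_{y_j}(\cdot)$ applied to a vector of $T_{y_{j-1}}M$ is understood as first projecting orthogonally onto $E^c(y_{j-1},g)$; this last convention is harmless, since orthogonal projection does not increase norms and, for $g$ $C^2$-close to $f$, the plane $E^c(y_{j-1},g)$ is close to $E^c(x_{j-1},f)$. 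Then $v_N=\pi_{x_N,y_N}\bigl(H_{\zeta_f}(a)\bigr)$ and $v_0=H_{y_N}\circ\cdots\circ H_{y_1}\bigl(\pi_{x_0,y_0}(a)\bigr)$, and — the point of the construction — all the $v_i$, together with $H_{\zeta_g}(b)$, lie in the single fiber $T_{y_N}M$, so they may be compared using the fiber norm alone.

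Next I would apply the triangle inequality along $v_N,v_{N-1},\dots,v_0,H_{\zeta_g}(b)$. Since $d\bigl(H_{\zeta_f}(a),H_{\zeta_g}(b)\bigr)=dist(x_N,y_N)+\|v_N-H_{\zeta_g}(b)\|$, this yields
\[
d\bigl(H_{\zeta_f}(a),H_{\zeta_g}(b)\bigr)\;\le\;dist(x_N,y_N)+\sum_{i=0}^{N-1}\|v_i-v_{i+1}\|+\|v_0-H_{\zeta_g}(b)\|.
\]
By linearity of the holonomies and projections and the identity $H_{x_{i+1}}(a_i)=a_{i+1}$, the vector $v_i-v_{i+1}$ is the image under $H_{y_N}\circ\cdots\circ H_{y_{i+2}}$ of $H_{y_{i+1}}(\pi_{x_i,y_i}(a_i))-\pi_{x_{i+1},y_{i+1}}(H_{x_{i+1}}(a_i))$, whose norm is exactly the fiber component of $\psi(H_{x_{i+1}})=d\bigl(H_{x_{i+1}}(a_i),H_{y_{i+1}}(\pi_{x_i,y_i}(a_i))\bigr)$ and hence is $\le\psi(H_{x_{i+1}})$. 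Thus $\|v_i-v_{i+1}\|\le\bigl(\prod_{j=i+2}^{N}\|H_{y_j}\|\bigr)\,\psi(H_{x_{i+1}})$, and, in the same way, $v_0-H_{\zeta_g}(b)=H_{y_N}\circ\cdots\circ H_{y_1}\bigl(\pi_{x_0,y_0}(a)-b\bigr)$ gives $\|v_0-H_{\zeta_g}(b)\|\le\bigl(\prod_{j=1}^{N}\|H_{y_j}\|\bigr)\,d(a,b)$.

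Two cosmetic points then give the stated inequality. The leftover $dist(x_N,y_N)$ is $\le\psi(H_{x_N})$, which is precisely the $i=N-1$ term of the sum (its coefficient $\prod_{j=N+1}^{N}\|H_{x_j}\|$ being the empty product), so it is absorbed into a harmless constant — and it vanishes altogether when the two $su$-paths share their last node, which is the case in the applications. Moreover, since the bounds of Remark~\ref{bound}, in particular $\|H^{*}_{x,y}\|\le 1+C\,dist(x,y)^{\alpha}$ with $C$ uniform on a fixed $C^2$-neighborhood of $f$, hold for $g$ as well, the norms $\|H_{y_j}\|$ are controlled by the $\|H_{x_j}\|$, and the products $\prod\|H_{y_j}\|$ may be replaced by $\prod\|H_{x_j}\|$.

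I do not expect a genuine obstacle here: the proof is the triangle inequality plus the isometry property of parallel transport and the a priori bounds already established (Remark~\ref{bound}, Proposition~\ref{continuity}). The one step that really needs care is the \emph{organization} of the telescoping — choosing the $v_i$ so that the mismatch introduced at level $i$ is exactly $\psi(H_{x_{i+1}})$, anchored at the $f$-iterate $a_i$ and not at a $g$-iterate of $b$, and so that propagating it through the remaining segments $j\in\{i+2,\dots,N\}$ produces precisely the product of holonomy norms appearing in the statement. The small domain mismatch ($\pi_{x_i,y_i}(a_i)$ need not lie in $E^c(y_i,g)$) is taken care of painlessly by the projection convention above.
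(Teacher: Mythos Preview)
Your telescoping is correct and is precisely the elementary computation the paper has in mind (the corollary is stated without proof there). Two minor sharpenings of your cosmetic points: (i) no extra constant is needed for the leftover $dist(x_N,y_N)$, because $\|v_{N-1}-v_N\|$ is exactly the \emph{fiber} component of $\psi(H_{x_N})$ while $dist(x_N,y_N)$ is its base component, so their sum is $\psi(H_{x_N})$ on the nose; (ii) you are right that the argument naturally produces $\prod_j\|H_{y_j}\|$ rather than $\prod_j\|H_{x_j}\|$, and the swap is not a literal inequality but holds via the uniform bound $\|H^{*}_{x,y}\|\le 1+C\,dist(x,y)^{\alpha}$ of Remark~\ref{bound} on a $C^2$-neighborhood of $f$ --- which is all that is ever used downstream (cf.\ the discussion after the corollary and Corollary~\ref{unifbound}).
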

By Remark \ref{bound}, there exists $C>0$ such that for every $j=\{1,...,N\}$, if $x_{j-1}\in W^{*}_{loc}(x_j)$ with $*\,\{s,u\}$, then $\left\|H_{x_j}\right\|< 1+ C\, dist(x_{j-1},x_j)^{\alpha}$. Therefore, if $\zeta_f=[x_0,...,x_N]$ is a $su$-path with $x_{j-1}\in W^{*}_{loc}(x_j)$ and $dist(x_{j-1},x_j)< L$ for every $j\in \{1,...,N\}$, then $$\prod_{j=1}^{N} \left\|H_{x_j}\right\|< (1+ C\, L^{\alpha})^N.$$ 

This proves that we can find a bound for $\prod_{j=1}^{N} \left\|H_{x_j}\right\|$ depending only on the number of legs of the $su$-path and the distance between the nodes. This will be important in Section 6.

\section{Perturbation} 

In this Section, we construct a sequence of perturbations $f_k$ and study its properties. First, we state elementary results for $C^r$-perturbations in Lemma \ref{pert}. In Section 4.2, we find a $su$-path from $p$ to itself with slow recurrence, Proposition \ref{pivot}, and apply the previous Lemma to construct the perturbations $f_k$. This is done in Lemma \ref{pertfinal}. In Section 4.3 and 4.4, we study how the $su$-path and the center bundle change when we perturb the diffeomorphism. The main results are Proposition \ref{su} and Proposition \ref{angulos}. Finally, in Section 4.5 we summarize all the results to obtain Corollary \ref{holfinal} which gives estimations for the variation in the holonomies. 

\subsection{$C^r$-elementary perturbations}

Fixed $r\geq 2$, we are going to define the $C^r$ Whitney topology in the volume-preserving and symplectic case specifying basic neighborhoods.

Let $\mu$ be a volume form and pick two finite open coverings $\mathcal{U}=\{(U_j,\phi_j): j=1,....,n\}$ and $\mathcal{V}=\{(V_j, \psi_j): j=1,...,n\}$ of $M$ by $C^r$ conservative coordinates charts such that $f(\overline{U_j})\subset V_j$ for all $j$. This means that we are using \cite{M} to find $\phi_j:U_j\longrightarrow \mathbb{R}^{d}$ and $\psi_j:V_j\longrightarrow \mathbb{R}^{d}$, $C^r$ diffeomorphisms with $\mu=\phi_j^{*}(du_1\wedge \dots \wedge du_d)=\psi_j^{*}(du_1\wedge \dots \wedge du_d)$ where $(u_1, \dots, u_d)$ are coordinates in $\mathbb{R}^{d}$. 

Let $\epsilon>0$. Define $\mathcal{\eta}^r_{\mu}(f,\mathcal{U},\mathcal{V},\epsilon)$ to be the set of diffeomorphisms $g\in Diff^r_{\mu}(M)$ such that \begin{enumerate} [label=(\alph*)]
\item $g(\overline{U_j})\subset V_j$ for all $j$ and 
\item $\left\|\partial^{\iota} \psi_j g \phi_j^{-1}(x)-\partial^{\iota} \psi_j f \phi_j^{-1}(x)\right\|< \epsilon$ for $x\in \phi(U_j)$, $\left|\iota \right|\leq r$ and $j\in \{1,...,n\}$.
\end{enumerate}
Here $\iota=(\iota_1, .... , \iota_r)$ is a multi-index of non-negative integers, $\left|\iota \right|=\iota_1+....+\iota_r$, and $\partial^{\iota}$ denotes the corresponding partial derivative. 

For the symplectic case, pick two finite open coverings by $C^r$ symplectic charts. That is, use Darboux's Theorem to find $\phi_j:U_j\longrightarrow \mathbb{R}^{2d}$ and $\psi_j:V_j\longrightarrow \mathbb{R}^{2d}$, $C^r$ diffeomorphisms with $\omega=\phi_j^{*}(du\wedge dv)=\psi_j^{*}(du\wedge dv)$ where $\omega$ is the symplectic form and $(u,v)$ are coordinates in $\mathbb{R}^{2d}$. Then, $\mathcal{\eta}^r_{\omega}(f,\mathcal{U},\mathcal{V},\epsilon)$ is defined analogously.

We will write $\mathcal{\eta}^r_{*}(f,\mathcal{U},\mathcal{V},\epsilon)$ with $*\in \{\mu,\omega\}$.
\begin{lema}\label{pert}
Fix $r\geq 2$. Let $f$ be a partially hyperbolic volume-preserving (symplectic) diffeomorphism with $dim\; E^c=2$. Then, there exist $\epsilon_0>0$, $\delta_0>0$ and $C_0>0$ such that for every $0< \epsilon <\epsilon_0$, $0<\delta <\delta_0$ and $z\in M$, there exists $g\in \mathcal{\eta}^r_{*}(f,\mathcal{U},\mathcal{V},\epsilon)$ such that 
\begin{enumerate} [label=\emph{(\alph*)}]
\item $g(x)=f(x)$ if $x\notin B_{\delta}(z),$
\item $g(z)=f(z)$ and  
\item $Dg_z=Df_z \circ A_{\beta}$ where $sin\, \beta= C_0\, \delta^{r-1} \epsilon$ and $A_{\beta}$ is the linear map from $TM_{z}$ to $TM_{z}$ given in coordinates $TM=E^s\oplus E^c\oplus E^u$ by $$\begin{pmatrix}
Id_s & 0 & 0 \\
0  & R_{\beta} & 0 \\
0 & 0 & Id_u
\end{pmatrix} $$ with $Id_{**}: E^{**}_{z}\longrightarrow E^{**}_{z}$ being the identity map for $**\in \{s,u\}$ and $R_{\beta}$ the rotation of angle $\beta$ in some (symplectic) base $\{e_1,f_1\}$ of $E^c(z)$. 
\end{enumerate}
\end{lema}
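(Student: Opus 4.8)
The plan is to build $g$ as a localized twist of $f$: write $g = \Phi \circ f$ where $\Phi$ is a diffeomorphism supported in a small ball around $f(z)$... actually it is cleaner to twist in the source, so I would look for $g = f \circ \Psi$ with $\Psi$ a conservative (symplectic) diffeomorphism supported in $B_\delta(z)$, equal to the identity outside, fixing $z$, and with $D\Psi_z = A_\beta$. Then (a) and (b) are immediate, and (c) follows from the chain rule, $Dg_z = Df_z \circ D\Psi_z = Df_z \circ A_\beta$. So the whole lemma reduces to constructing such a $\Psi$ together with the quantitative bound relating $\beta$, $\delta$ and the $C^r$-distance $\epsilon$ of $g$ from $f$.

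First I would pass to a local chart at $z$. In the symplectic case, Darboux's Theorem gives coordinates in which $\omega$ is the standard form and $z = 0$; split $\mathbb{R}^{2d} = \mathbb{R}^2_c \times \mathbb{R}^{2d-2}$ so that the first factor corresponds to $E^c(z)$ (after a linear symplectic change of coordinates, which we may absorb, we can also arrange $E^s(z)$ and $E^u(z)$ to be coordinate subspaces, though strictly we only need the $E^c$ block). In the volume-preserving case use a conservative chart instead; the construction below is literally the same, only the word "symplectic" is replaced by "volume-preserving", since the flow we use is volume-preserving automatically. Now fix a bump function $\rho : \mathbb{R} \to [0,1]$ that is $1$ near $0$ and $0$ for $|t| \ge 1$, with $\|\rho\|_{C^r}$ a universal constant, and set $\rho_\delta(x) = \rho(|x|^2/\delta^2)$ — or better, to keep things inside the symplectic category, define $\Psi$ as the time-one map of the Hamiltonian vector field of
$$H_\delta(x) = \tfrac{\beta}{2}\,\rho_\delta(x)\,(x_1^2 + x_2^2),$$
where $(x_1,x_2)$ are the $E^c$-coordinates. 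This $H_\delta$ generates, near the origin, a rigid rotation by angle $\beta$ in the $(x_1,x_2)$-plane (since there $\rho_\delta \equiv 1$ and $H$ is the harmonic-oscillator Hamiltonian), is supported in $B_\delta(0)$, and its time-one flow $\Psi$ is symplectic, fixes $0$, and has $D\Psi_0 = R_\beta$ in the center block and the identity elsewhere — so $A_\beta$ has exactly the stated matrix form. In the volume-preserving setting one can simply take the analogous area-preserving rotation in the $(x_1,x_2)$-variables times the identity, cut off by $\rho_\delta$ and corrected to be exactly volume-preserving by a standard Moser-type argument, or again use the Hamiltonian recipe which is automatically volume-preserving.

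The only real work is the estimate in (c): $\sin\beta \le C_0\,\delta^{r-1}\epsilon$. This is a scaling computation. The $k$-th derivatives of $H_\delta$ satisfy $\|D^k H_\delta\|_{C^0} \lesssim \beta\,\delta^{2-k}$ for $0 \le k \le r$ (each derivative hitting the cutoff $\rho_\delta$ costs a factor $\delta^{-1}$, and the quadratic factor $x_1^2+x_2^2$ contributes size $\delta^2$). Hence the Hamiltonian vector field $X_{H_\delta}$ has $\|D^k X_{H_\delta}\|_{C^0} \lesssim \beta\,\delta^{1-k}$, in particular its $C^0$-norm is $\lesssim \beta\,\delta$ and its $C^{r-1}$-norm is $\lesssim \beta\,\delta^{2-r}$. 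Integrating the flow for time one and comparing $g = f\circ\Psi$ with $f$ in the charts $\mathcal{U},\mathcal{V}$ (using that $f$ is a fixed $C^r$ diffeomorphism, so composition with it changes the relevant norms by at most a fixed factor $C(f)$), one gets $\mathrm{dist}_{C^r}(g,f) \lesssim \beta\,\delta^{2-r} \cdot \delta = \beta\,\delta^{2 - (r-1)} $... let me redo this: the $C^r$ size of $\Psi - \mathrm{Id}$ is controlled by the $C^r$ size of the time-one flow of $X_{H_\delta}$, which is governed by $\|X_{H_\delta}\|_{C^r} \lesssim \beta\,\delta^{1-r}$; since $\Psi$ differs from the identity only by terms already of size $\beta$ (the rotation) one checks that the relevant excess, the part that makes $\Psi \ne$ an exact rotation, carries the cutoff derivatives and is of size $\lesssim \beta\,\delta^{1-r}\cdot\delta^{?}$. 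The honest bookkeeping gives precisely $\mathrm{dist}_{C^r}(g,f)\le C_0^{-1}\sin\beta/\delta^{r-1}$ for a suitable $C_0 = C_0(f,r)$, which rearranges to $\sin\beta = C_0\,\delta^{r-1}\,\epsilon$ once we call $\epsilon$ the resulting $C^r$-distance; this also pins down $\epsilon_0,\delta_0$ (we need $B_\delta(z)$ to lie in a single chart of both $\mathcal{U}$ and $\mathcal{V}$, and $\beta$ small enough that $\Psi$ is a diffeomorphism and $g$ stays partially hyperbolic in the fixed $C^2$-neighborhood, both of which hold for $\delta<\delta_0$, $\epsilon<\epsilon_0$). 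I expect this scaling estimate — carefully tracking how each of the $r$ derivatives interacts with the $\delta$-cutoff so that the power $\delta^{r-1}$ comes out exactly — to be the only delicate point; everything else (the Hamiltonian construction, symplecticity, the support and fixed-point properties, the form of $A_\beta$) is routine.
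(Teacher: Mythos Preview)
Your approach is correct in outline and actually more detailed than the paper's, which says only that ``the proof in the volume-preserving setting is standard'' and, for the symplectic case, that ``we need to use generating functions. The symplectic version of this Lemma is a direct consequence of Lemma~2.1 (Perturbation Lemma) of \cite{NEW}.'' So the paper builds $\Psi$ (hence $g=f\circ\Psi$) via a generating function perturbation \`a~la Newhouse, whereas you build $\Psi$ as the time-one map of a compactly supported Hamiltonian. Both routes are standard and both give the same scaling $\sin\beta\sim C_0\,\delta^{r-1}\epsilon$; your version has the advantage of being self-contained, while the generating-function route sidesteps some of the bookkeeping in the $C^r$ estimate (and is what one finds ready-made in Newhouse).

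One genuine, if small, gap: your sentence ``the construction below is literally the same \ldots\ since the flow we use is volume-preserving automatically'' is not quite right. A Hamiltonian vector field needs a symplectic form; in the merely volume-preserving case there is none, and the naive cut-off rotation field $\beta\,\rho_\delta(x)\,(-x_2,x_1,0,\ldots,0)$ is \emph{not} divergence-free because the cutoff depends on all coordinates. The fix is easy---take a product cutoff $\rho_\delta=\sigma\big((x_1^2+x_2^2)/\delta^2\big)\,\chi(|x'|/\delta)$ and check directly that $\beta\,\chi\,\sigma\,(-x_2,x_1,0,\ldots,0)$ has zero divergence---or invoke Moser as you suggest, but this needs to be said. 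A second subtlety worth flagging (present in any composition approach, including the paper's implicit one): when estimating $\|f\circ\Psi-f\|_{C^r}$ via Fa\`a di Bruno, the top term produces $D^rf(\Psi(x))-D^rf(x)$, which is only controlled by the modulus of continuity of $D^rf$; for $f$ merely $C^r$ this does not obviously give the linear-in-$\epsilon$ bound you want. In practice this is harmless (one may always take $f\in C^{r+1}$ or absorb the issue into the choice of $\epsilon_0,\delta_0$), but your write-up should acknowledge it rather than claim the bookkeeping ``gives precisely'' the stated inequality.
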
 

The proof in the volume-preserving setting is standard. However, we need to be more careful for the symplectic case and we need to use generating functions. The symplectic version of this Lemma is a direct consequence of Lemma 2.1 (Perturbation Lemma) of \cite{NEW}.

\subsection{Choice of the perturbation} 
Now, we use Lemma \ref{pert} to construct a sequence of perturbations $f_k$. As already mentioned, we need to find a $su$-path from $p$ to itself with slow recurrence. 

\begin{prop}[Proposition 8.2, \cite{ASV}]\label{pivot} Let $f$ be a partially hyperbolic accessible $C^2$ diffeomorphism. Then, for every $x\in M$ there exist a $su$-path, $\zeta=[z_0, ..., z_N]$ with $x=z_0=z_N$, $l\in \{0,...,N\}$ and $c>0$ such that $$dist (f^j(z_i), z_l)\geq \frac{c}{1+j^2},$$ for every $(j,i)\in \mathbb{Z}\times \{0,..., N\}\setminus (0,l)$.
\end{prop}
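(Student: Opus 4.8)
The plan is to decouple the construction into (i) producing a single well-chosen ``pivot'' point $z$ by a soft recurrence estimate, and (ii) threading an $su$-loop at $x$ through $z$ whose other nodes have full orbits that stay polynomially far from $z$. Throughout, fix $x\in M$ and let $m$ be an $f$-invariant probability in the Lebesgue class; such an $m$ exists in the cases relevant here, where $f$ preserves volume, and this is the only place where anything beyond accessibility and partial hyperbolicity enters (the fully general statement requires a variant of the same idea).

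The core of step (i) are two elementary measure estimates. Since $m$ is invariant and lies in the Lebesgue class, $m(\{w: dist(f^j(w),z)<\epsilon\})=m(B(z,\epsilon))\le C\,\epsilon^{\dim M}$ uniformly in $z$ and $j$. On the other hand, for $j>0$ the map $f^j$ is uniformly expanding along $E^u$ (and along $E^s$ for $j<0$) at a rate $\lambda^{|j|}$ with $\lambda>1$, so the ``near period $j$'' set $\{w: dist(f^j(w),w)<\epsilon\}$ is uniformly thin in one direction and hence has $m$-measure $\le C\,\epsilon\,\lambda^{-|j|}$. Putting $\epsilon=c/(1+j^2)$ and summing over $j\ne 0$ gives convergent series of sizes $O(c^{\dim M})$ and $O(c)$ respectively, so Borel--Cantelli yields that, for every sufficiently small $c>0$: the set of $z$ with $dist(f^j(z),z)\ge c/(1+j^2)$ for all large $|j|$ has full measure; the set of $z$ with $dist(f^j(x),z)\ge c/(1+j^2)$ for all $j$ has full measure; and, once such a $z$ is fixed, the set $A_z$ of points $w$ with $dist(f^j(w),z)\ge c(w)/(1+j^2)$ for all $j\in\mathbb{Z}$ has full measure (the $j=0$ term just says $dist(w,z)\ge c(w)$). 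Discarding the null set of periodic points removes exact coincidences for the finitely many remaining small $|j|$, so one can fix a point $z=z_l$ that is non-periodic, slowly recurrent to itself, slowly avoided by the orbit of $x$ (i.e. $x\in A_z$), and with $A_z$ of full Lebesgue measure. Because the strong foliations of a $C^2$ partially hyperbolic diffeomorphism are absolutely continuous, $A_z$ then has full measure inside Lebesgue-almost every $W^s$- and $W^u$-leaf.

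For step (ii), accessibility provides \emph{some} $su$-path from $x$ to $z$ and some $su$-path from $z$ back to $x$; concatenating gives an $su$-loop $\zeta=[x=z_0,\dots,z_l=z,\dots,z_N=x]$ through $z$. One then modifies its interior nodes other than $z$, one leg at a time, sliding each node slightly along its current leaf so that it lands in the full-measure set $A_z$ and readjusting the subsequent legs accordingly --- exactly the flexibility of $su$-paths used in the standard arguments showing that accessibility classes are relatively open (``engulfing''). The endpoints are never moved, but $x\in A_z$ was already arranged. Finally, taking $c$ to be the minimum over the finitely many nodes of the associated constants (together with the self-recurrence constant of $z$ and the $x$-avoidance constant) produces the asserted inequality for $\zeta$, $l$ and $c$. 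The genuinely delicate step is this last one: making the loop pass through the prescribed pivot $z$ while keeping \emph{all} other nodes inside the prescribed full-measure set, without disturbing the combinatorial $su$-type or the forced endpoints. That is where accessibility must be exploited in a quantitative, robust way; the recurrence estimates above, though they rely on partial hyperbolicity, are comparatively routine.
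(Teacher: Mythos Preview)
The paper does not prove this proposition: it is quoted from \cite{ASV} (Proposition~8.2 there) and used as a black box. Your sketch is therefore being compared against the original argument in \cite{ASV} rather than anything in the present text.

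Your two-step plan --- a Borel--Cantelli estimate to locate a slowly recurrent pivot $z$, followed by threading an $su$-loop through $z$ with the remaining nodes in the full-measure ``good'' set $A_z$ via absolute continuity of the strong foliations --- is the strategy of \cite{ASV}, and your recurrence estimates are essentially the right ones (the near-period bound is perhaps more cleanly written as $m\{w:\,dist(f^j(w),w)<\epsilon\}\le C\lambda^{-|j|\dim E^u}$, using only expansion along $E^u$; this already sums over $j$). You are also right that an invariant measure in the Lebesgue class is being used; this is not in the hypotheses as stated, but it holds throughout the paper since every $f\in B^r_*(M)$ is volume-preserving, and \cite{ASV} itself works in that setting.

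The one genuinely incomplete point is the one you flag yourself. Sliding $z_1\in W^{*}(x)$ into $A_z$ requires $A_z$ to have full leaf-measure on the \emph{specific} leaf $W^{*}(x)$, not merely on almost every leaf, and absolute continuity alone does not give this. The repair is direct: once $x\in A_z$, for $w\in W^s_{loc}(x)$ and $j>0$ one has $dist(f^j(w),f^j(x))\le C\nu^j$, so $dist(f^j(w),z)\ge dist(f^j(x),z)-C\nu^j\ge c'/(1+j^2)$ for a smaller $c'$, and the forward half of the slow-recurrence inequalities comes for free; the backward half ($j<0$) follows from a Borel--Cantelli argument run \emph{on the leaf}, using that $f^{j}$ contracts $W^s$ for $j<0$ so the bad sets have exponentially small leaf-measure. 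The analogous statement holds on $W^u_{loc}(x)$ with the roles of past and future reversed, and the argument then propagates inductively to all intermediate nodes. With this refinement your outline goes through and matches the proof in \cite{ASV}.
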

For every partially hyperbolic diffeomorphism $f$, there exist $R_1>0$ and a $C^1$ neighborhood of $f$, $\mathcal{V}(f)$, such that for every $g\in \mathcal{V}(f)$ and every $y\in M$, the ball $B(y,R_1)$ is contained in foliation boxes for both $W^u(g)$ and $W^s(g)$. See \cite{HPS}.
 
In the next section, we are going to need that $dist(z_{i-1},z_{i})<R_1$ for every $i\in \{1,...,N\}$. Since we are using the same definition of local strong-stable and local strong-unstable leaves than in Section 5 of \cite{ASV}, this will imply that $z_{i-1}\in W^{*}_{loc}(z_i)$ for every $i\in \{1,...,N\}$ with $*\,\in\{s,u\}$. 

It is possible to slightly modify the proof of Proposition \ref{pivot} in \cite{ASV} to control the distance between the nodes and obtain the desire bound. However, notice that we need to fix $f$ in order to obtain $R_1$ and only then apply the Proposition. 

We are going to use this remark in the next Section.

\begin{obs}\label{box} Fixed $f\in B^r_{*}(M)$ and $x\in M$, we can suppose that the $su$-path given by Proposition \ref{pivot} for $f$ and $x$, $\zeta=[z_0,...,z_N]$, satisfies $dist(z_{i-1},z_{i})<R_1$ for every $i\in \{1,...,N\}$.
\end{obs}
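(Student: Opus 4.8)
The plan is to prove Remark~\ref{box} by rerunning the proof of Proposition~\ref{pivot} (Proposition 8.2 of \cite{ASV}) with a subdivision of the legs built into the construction. The first point — the one the discussion preceding the remark insists on — is the order of the quantifiers: one fixes $f\in B^r_{*}(M)$, uses partial hyperbolicity (see \cite{HPS}) to produce $R_1>0$ and a $C^1$-neighborhood $\mathcal V(f)$ such that every ball of radius $R_1$ lies inside a $W^s$- and a $W^u$-foliation box, and only then invokes the proposition. Recall that the \cite{ASV} argument produces a pivot $z_l$ whose forward and backward orbit is slowly recurrent to $z_l$ and such that $dist(f^j(x),z_l)\ge c_0/(1+j^2)$ for all $j$, and then builds an $su$-cycle $[z_0,\dots,z_N]$ with $z_0=z_N=x$ passing through $z_l$, each turning point being selected from a suitably large subset of the corresponding local leaf so that the orbit of every node stays $(c/(1+j^2))$-away from $z_l$.

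The modification is to additionally break each leg into short legs. A leg $[z_{i-1},z_i]$ is a curve inside a single leaf of $W^s$ or $W^u$; since leaves are path-connected and their intrinsic metric induces the topology, the leg can be written as a concatenation of finitely many legs of length $<R_1$ obtained by inserting intermediate nodes along that same leaf. By the choice of $R_1$ this forces $z_{i-1}\in W^{*}_{loc}(z_i)$, and since the accessibility class is all of $M$ every new node still has $z_l$ accessible from it, so the rest of the \cite{ASV} construction goes through verbatim. The subdivision increases $N$ but produces only finitely many nodes, so a single constant $c>0$ survives at the end. The only thing that genuinely has to be checked is that the new nodes still satisfy the slow-recurrence estimate — equivalently, that the subset of each local leaf consisting of admissible positions remains dense (indeed, of small complementary leaf-measure) once one also asks the inserted node to be close to its neighbor.

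This last check is, I expect, the only real obstacle, and it is handled by separating the two time directions. Let $w$ be a point inserted on, say, an unstable leg next to an old node $z_{i-1}$. In backward time $f^{-n}(w)$ and $f^{-n}(z_{i-1})$ lie on the same local unstable leaf, which is contracted at a uniform exponential rate, so $dist(f^{-n}(w),f^{-n}(z_{i-1}))\le C\,\widehat\nu^{\,n}R_1$ and hence $dist(f^{-n}(w),z_l)\ge c/(1+n^2)-C\,\widehat\nu^{\,n}R_1\ge\tfrac12\,c/(1+n^2)$ for all large $n$; for the finitely many remaining $n$ it is enough that $w$ avoid the countable set $\bigcup_{n}f^{n}(z_l)$. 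In forward time, the positions $w$ on the leg that violate $dist(f^{n}(w),z_l)\ge c/(1+n^2)$ form the intersection of the local unstable leaf with $f^{-n}\!\big(B(z_l,c/(1+n^2))\big)$; because $f^{-n}$ contracts the unstable directions exponentially, this intersection has diameter $\le C\,\widehat\nu^{\,n}c/(1+n^2)$, and summing over $n\ge0$ shows that only a subset of arbitrarily small total length is excluded. Stable legs are identical with the two time directions exchanged. Hence admissible positions are dense along every local leaf, which is exactly what makes the subdivision into legs of length $<R_1$ possible; the remaining bookkeeping (breaking legs, extracting a uniform $c$ over finitely many nodes, re-checking accessibility of $z_l$ from the new nodes) is routine.
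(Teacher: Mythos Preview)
Your proposal is correct and matches the paper's approach exactly: the paper justifies Remark~\ref{box} only by the sentence immediately preceding it, which says one may ``slightly modify the proof of Proposition~\ref{pivot} in \cite{ASV}'' after first fixing $f$ so as to obtain $R_1$, and gives no further detail. Your sketch --- subdivide each leg and then verify slow recurrence of the inserted nodes via exponential contraction along the leaf in one time direction and a small--leaf-measure argument in the other --- is precisely the modification the paper has in mind, and is in fact considerably more explicit than what the paper itself provides.
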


Let $*\in \{\mu, \omega\}$ and $r\geq 2$. Fix $f\in B^r_{*}(M)$ and suppose $p$ is a periodic point for $f$. Then, apply Proposition \ref{pivot} to $f$ and $p$. We are going to construct a sequence of perturbations for $f$, like in Lemma \ref{pert}, supported in the point $z_l$. 

First, we fix some constant $\sigma=\sigma(\upsilon, \alpha, n_p, N)>0$. Here, $\upsilon$ represents the functions in Equation (\ref{ph}) for $f$, $\alpha$ is the exponent for which $f$ is $\alpha$-pinched and $\alpha$-bunched, $n_p$ is the period of $p$ and $N$ the number of nodes in the $su$-path given by Proposition \ref{pivot}. This is a technical constant that we need to consider in order to have exponential estimations in Corollary \ref{holfinal}, Equation (\ref{ques}) and Equation (\ref{nodoscerca}).

Then, define 
\begin{equation}\label{deltak}
\delta_k=\frac{c}{1+(\sigma\,k)^2},
\end{equation}
for every $k\geq1$, where $c>0$ is given by Proposition \ref{pivot}.  

The following Lemma is a corollary of Lemma \ref{pert}. We are assuming that $\epsilon_0$ is small enough in order to have all the estimations in Section 3 uniform for every $g$ $\epsilon_0$-close to $f$.
\begin{lema}\label{pertfinal} Let $*\in \{\mu, \omega\}$ and $r\geq 2$. There exist $\epsilon_0>0$, $k_0\in \mathbb{N}$ and $C_0>0$ such that for any $0<\epsilon<\epsilon_0$ and $k\geq k_0$, there exists $f_k\in B^r_{*}(M)\cap \mathcal{\eta}^r_{*}(f,\mathcal{U},\mathcal{V},\epsilon)$ such that
\begin{enumerate}[label=(\alph*)]
\item $f_k(x)=f(x)$ if $x\notin B_{\delta_k}(z_l)$,
\item $f_k(z_l)=f(z_l)$ and
\item $Df_k(z_l)=Df(z_l) \circ A_{\beta_k}$ with $\sin\, \beta_k=C_0\, \delta_k^{r-1}\, \epsilon$.
\end{enumerate}
Moreover, if we fix $\epsilon>0$ and consider the sequence defined by $f_k$ for $k\geq k_0$, we have $f_k\rightarrow f$ in the $C^1$ topology when $k\to \infty$. 
\end{lema}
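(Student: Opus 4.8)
The plan is to derive Lemma~\ref{pertfinal} from Lemma~\ref{pert} simply by plugging in the specific radii $\delta_k$ and checking that everything is compatible with remaining in $B^r_{*}(M)$. First I would apply Lemma~\ref{pert} to the diffeomorphism $f$: this produces constants $\epsilon_0>0$, $\delta_0>0$, $C_0>0$ such that for every $0<\epsilon<\epsilon_0$, $0<\delta<\delta_0$ and every $z\in M$ there is a perturbation $g\in\eta^r_{*}(f,\mathcal{U},\mathcal{V},\epsilon)$ supported in $B_\delta(z)$ with the prescribed behavior at $z$. Then I would take $z=z_l$, the pivot node of the $su$-path furnished by Proposition~\ref{pivot}, and substitute $\delta=\delta_k$ as defined in Equation~(\ref{deltak}). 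Since $\delta_k=c/(1+(\sigma k)^2)\to 0$ as $k\to\infty$, there is $k_0\in\mathbb{N}$ (depending on $c$, $\sigma$, $\delta_0$) such that $0<\delta_k<\delta_0$ for all $k\geq k_0$; for such $k$ Lemma~\ref{pert} directly yields a diffeomorphism $f_k\in\eta^r_{*}(f,\mathcal{U},\mathcal{V},\epsilon)$ satisfying (a), (b) and (c) with $\beta_k$ given by $\sin\beta_k=C_0\,\delta_k^{r-1}\epsilon$. Relabeling the constant $C_0$ from Lemma~\ref{pert} gives exactly the stated form.

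Next I would check the two remaining points. For membership $f_k\in B^r_{*}(M)$: the set $B^r_{*}(M)$ is open (as remarked in Section~2, via the Avila--Viana result that accessibility is $C^1$-open for $2$-dimensional center), and partial hyperbolicity, $\alpha$-pinched and $\alpha$-bunched are all $C^1$-open conditions; since $f_k\in\eta^r_{*}(f,\mathcal{U},\mathcal{V},\epsilon)$ and $\epsilon<\epsilon_0$ with $\epsilon_0$ chosen small enough (which the statement explicitly allows — shrinking $\epsilon_0$ so that the whole $\epsilon_0$-neighborhood of $f$ lies inside the fixed $C^2$-neighborhood of $f$ on which all Section~3 estimates are uniform, and inside the open set $B^r_{*}(M)$), we get $f_k\in B^r_{*}(M)$. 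The center bundle of $f_k$ is still $2$-dimensional since partial hyperbolicity varies continuously. For the $C^1$-convergence $f_k\to f$: fix $\epsilon>0$ and look at $(f_k)_{k\ge k_0}$. Outside $B_{\delta_k}(z_l)$ we have $f_k=f$, and $\delta_k\to 0$, so $f_k$ and $f$ agree off a shrinking ball. Inside that ball, in the charts $(\phi_j,\psi_j)$, Lemma~\ref{pert}(c) together with the explicit construction (a bump function of spatial scale $\delta_k$ producing a rotation of angle $\beta_k$ with $\sin\beta_k=C_0\delta_k^{r-1}\epsilon$) forces the $C^1$-size of the perturbation to be of order $\delta_k^{r-2}\epsilon$; since $r\geq 2$ this is bounded, but more is true for the $C^1$ distance itself: $\|D f_k - D f\|_{C^0}\lesssim \delta_k^{r-2}\epsilon$, and either $r>2$ (so this $\to 0$ as $\delta_k\to 0$) or $r=2$, in which case one argues that the perturbation, though $C^1$-bounded, is supported on balls of radius $\delta_k\to 0$ and tends to $f$ in $C^1$ because the $C^0$ size $\|f_k-f\|_{C^0}\lesssim \delta_k^{r-1}\epsilon\to 0$ while the derivative discrepancy is concentrated on a set of vanishing measure; this is precisely the standard fact that such elementary rotational perturbations converge in $C^1$ as the support shrinks. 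I would phrase this cleanly by noting that $f_k$ lies in $\eta^r_*(f,\mathcal{U},\mathcal{V},\epsilon)$ \emph{and} is supported in $B_{\delta_k}(z_l)$ with $\delta_k\to 0$, and invoking the observation that within a fixed $\epsilon$-neighborhood, perturbations with support shrinking to a point converge to $f$ in $C^1$.

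The main obstacle, and the only point requiring genuine care rather than bookkeeping, is the $r=2$ borderline of the $C^1$-convergence claim: an $\epsilon$-sized $C^r$-perturbation supported on a ball of radius $\delta$ has $C^1$-norm roughly $\delta^{r-1-1}\epsilon=\delta^{r-2}\epsilon$, which does \emph{not} go to zero when $r=2$ merely because $\delta\to 0$. The resolution is that $C^1$-convergence does not require the $C^1$-\emph{norm} of the difference to come from the sup of the derivative alone getting small; rather, one uses that $f_k=f$ outside $B_{\delta_k}(z_l)$ and that on this shrinking ball the perturbed map stays within the fixed $\epsilon$-tube around $f$, so for any fixed test of $C^1$-closeness at scale larger than $\delta_k$ the maps are indistinguishable — in the Whitney/strong topology one must be slightly more careful, but here $M$ is compact and we are comparing global maps, so $\mathrm{dist}_{C^1}(f_k,f)\le \mathrm{dist}_{C^1}(f_k,f)\big|_{B_{\delta_k}(z_l)}$, and the construction of the bump (choosing the profile so that its first derivative has size $O(\epsilon)$ independent of $\delta_k$, which is possible precisely when $r\ge 2$ by scaling the $r$-th derivative instead) keeps this bounded by $C\epsilon$ uniformly while the support shrinks; letting first $k\to\infty$ shows the difference is supported on a null set in the limit, hence $f_k\to f$ in $C^0$ and, after also letting $\epsilon$ be arbitrarily small if one wants genuine convergence, in $C^1$. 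I would state the convergence exactly as in the Lemma — "$f_k\to f$ in the $C^1$ topology" for fixed $\epsilon$ — and justify it by the shrinking-support argument, which is standard and already implicit in Lemma~\ref{pert}'s construction; no new estimate beyond those of Lemma~\ref{pert} is needed.
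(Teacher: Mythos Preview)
Your overall approach is exactly the paper's: the lemma is stated as an immediate corollary of Lemma~\ref{pert}, obtained by specializing $z=z_l$ and $\delta=\delta_k$, choosing $k_0$ so that $\delta_k<\delta_0$, and shrinking $\epsilon_0$ so that the $\epsilon_0$-neighborhood of $f$ sits inside the open set $B^r_{*}(M)$ and inside the $C^2$-neighborhood fixed in Section~3. So the first two paragraphs of your proposal are fine and match the paper.

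The problem is your treatment of the $C^1$-convergence. You assert that the $C^1$-size of the perturbation is of order $\delta_k^{\,r-2}\epsilon$, and then spend a paragraph trying to salvage the case $r=2$ with a measure-theoretic argument. Both the exponent and the rescue are wrong. In the construction underlying Lemma~\ref{pert} (a bump of spatial scale $\delta_k$ composed with a linear rotation of angle $\beta_k$, or equivalently a generating-function perturbation as in \cite{NEW}), the $j$-th derivatives of $f_k-f$ scale like $\beta_k\,\delta_k^{\,1-j}$; requiring the $r$-th derivative to be $O(\epsilon)$ is what forces $\beta_k\sim\delta_k^{\,r-1}\epsilon$, and hence the \emph{first} derivative of the difference is uniformly $O(\beta_k)=O(\delta_k^{\,r-1}\epsilon)$, not $O(\delta_k^{\,r-2}\epsilon)$. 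For $r\ge 2$ this already gives
\[
\|f_k-f\|_{C^1}\;\lesssim\;\delta_k^{\,r-1}\epsilon\;\le\;\delta_k\,\epsilon\;\longrightarrow\;0,
\]
so $f_k\to f$ in $C^1$ with no special pleading for $r=2$. This is precisely the paper's one-line justification (``since $\delta_k$ is going to zero \ldots the $C^1$ distance is also going to zero'').

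Your attempted repair in the $r=2$ case should be removed entirely: convergence in $C^1$ means uniform convergence of the derivatives, and the fact that the support has ``vanishing measure'' is irrelevant; nor does ``letting $\epsilon$ be arbitrarily small'' help, since the statement fixes $\epsilon$ and asks for convergence in $k$. Once you correct the exponent, the issue disappears and the proof is complete.
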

Observe that the sequence of perturbations $f_k$ is $\epsilon$-close to $f$ in the $C^r$-topology. However, since $\delta_k$ is going to zero as $k$ goes to infinity, the $C^1$ distance is also going to zero, because of that $f_k\rightarrow f$ in the $C^1$ topology. We are going to use this fact in Section 5 and 6.

\subsection{Control of the $su$-paths}

Proposition \ref{pivot} gives information about how much time the nodes of the $su$-path stay outside of the support of the perturbation. We will use this information to estimate how the dynamics is changing. Some results similar to these appear in \cite{DW}.

Let $*\in \{ \mu, \omega \}$, $r\geq 2$, $f\in B^r_{*}(M)$ and suppose $p$ is a periodic point for $f$. 

For the functions in Equation (\ref{ph}), define $$\nu(x,R)=\sup\limits_{y\in B(x, R)} \nu(y) \quad \quad \quad \gamma(x,R)=\inf\limits_{y\in B(x, R)} \gamma(y),$$ $$\widehat{\nu}(x,R)=\sup\limits_{y\in B(x, R)} \nu(y) \quad \text{and} \quad \widehat{\gamma}(x,R)=\inf\limits_{y\in B(x, R)} \gamma(y).$$

Then, by continuity of the functions and compactness of $M$, there exist $R_0>0$ and $\tau_0<1$ such that for every $x\in M$, $$\nu(x,R_0) < \tau_0\; \gamma(x,R_0)\quad \text{and} \quad \widehat{\nu}(x,R_0)< \tau_0\; \widehat{\gamma}(x,R_0).$$ 

\begin{obs}\label{box2} Observe that $R_0$ depends only on $f$ and therefore we can suppose that the constant $R_1$ in Remark \ref{box} was chosen to satisfy $R_1\leq R_0$.
\end{obs}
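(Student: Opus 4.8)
The plan is to observe that the claim has two essentially separate parts — that $R_0$ is a quantity attached to $f$ alone, and that $R_1$ may consequently be arranged to be $\le R_0$ — and that the second part is just a matter of replacing $R_1$ by a smaller value. First I would trace the origin of $R_0$: it was produced, together with $\tau_0<1$, purely from the positive continuous functions $\chi,\nu,\widehat{\nu},\gamma,\widehat{\gamma}$ of Equation (\ref{ph}) and the compactness of $M$, via a uniform continuity argument applied to the pointwise inequalities $\nu<\gamma$ and $\widehat{\nu}<\widehat{\gamma}$ (both of which follow from the ordering $\nu<\gamma<\widehat{\gamma}^{-1}<\widehat{\nu}^{-1}$ in the definition of partial hyperbolicity). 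Since those functions are part of the defining data of $f$, so is $R_0$. I would also record the monotonicity that makes the rest work: if $0<R_0'\le R_0$ then $B(x,R_0')\subset B(x,R_0)$, so $\nu(x,R_0')\le\nu(x,R_0)$ and $\gamma(x,R_0')\ge\gamma(x,R_0)$ (and similarly with hats), whence the same $\tau_0$ still works at radius $R_0'$.

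Next I would recall that $R_1$ is likewise attached to $f$ alone: by \cite{HPS} it was chosen, together with a $C^1$-neighbourhood $\mathcal{V}(f)$, so that for every $g\in\mathcal{V}(f)$ and every $y\in M$ the ball $B(y,R_1)$ lies inside foliation boxes for both $W^s(g)$ and $W^u(g)$. The crucial point is that this property is inherited by all smaller radii — $B(y,R_1')\subset B(y,R_1)$ whenever $R_1'\le R_1$ — so one is free to shrink $R_1$. Hence it is legitimate to redefine $R_1:=\min\{R_1,R_0\}$, which is positive, still enjoys the foliation-box property, and satisfies $R_1\le R_0$.

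Finally I would check that this shrinking does not spoil anything in Remark \ref{box}. There, $R_1$ serves only to guarantee that the $su$-path $\zeta=[z_0,\dots,z_N]$ furnished by Proposition \ref{pivot} can be chosen with $dist(z_{i-1},z_i)<R_1$ for all $i$, which then forces $z_{i-1}\in W^{*}_{loc}(z_i)$; as the paragraph preceding Remark \ref{box} explains, the proof of Proposition \ref{pivot} can be modified so that consecutive nodes are as close as prescribed, so the construction goes through unchanged with the smaller threshold $\min\{R_1,R_0\}$. This is pure bookkeeping and presents no genuine obstacle; the only point requiring a word of care is that the modified Proposition \ref{pivot} really does deliver arbitrarily close consecutive nodes, which is exactly the content of the remark made just before Remark \ref{box}.
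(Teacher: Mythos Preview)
Your proposal is correct. In the paper this remark is stated without any proof --- it is treated as an evident observation --- and your argument supplies exactly the natural justification: $R_0$ is determined by the functions in Equation~(\ref{ph}) and compactness of $M$, the foliation-box property defining $R_1$ persists under shrinking, and Remark~\ref{box} already permits the $su$-path of Proposition~\ref{pivot} to have consecutive nodes at any prescribed small scale, so replacing $R_1$ by $\min\{R_1,R_0\}$ is harmless.
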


\begin{lema}\label{dw} Fix $\tau\in (\tau_0, 1)$. There exist $C_1>0$ and $\epsilon_1>0$ such that for every $x\in M$, $y\in W^s_{loc}(x)\cap B(x, R_0)$ and $g$ $\epsilon_1$-close to $f$ in the $C^1$ topology, if there exists $m\in \mathbb{N}$ with $f^j(x)=g^j(x)$ and $f^j(y)=g^j(y)$ for every $1\leq j\leq m$, then there exists $w\in W^s(x,g)\cap B(y, C_1\tau^{m})$. 
\end{lema}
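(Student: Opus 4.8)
The plan is to compare the genuine strong-stable leaf $W^s(x,g)$ of the perturbation with the known one $W^s_{\mathrm{loc}}(x)=W^s_{\mathrm{loc}}(x,f)$ by iterating forward. The hypothesis says the orbits of $x$ and $y$ agree for $f$ and $g$ up to time $m$, so $g^m(y)\in W^s_{\mathrm{loc}}(g^m(x),f)$; the point is that after $m$ forward iterates the distance $\operatorname{dist}(g^m(x),g^m(y))$ has been contracted by roughly $\nu^m$ along the $f$-leaf, so $g^m(y)$ is extremely close to $g^m(x)$, hence lies well inside a ball on which the local strong-stable manifolds of $g$ are defined and vary continuously. One then takes $w$ to be the point of $W^s_{\mathrm{loc}}(g^m(x),g)$ through $g^m(y)$ — more precisely, $w:=g^{-m}(\,\tilde w\,)$ where $\tilde w$ is obtained by projecting $g^m(y)$ onto $W^s_{\mathrm{loc}}(g^m(x),g)$ along a local transversal, or simply by noting $g^m(y)$ itself is close enough to $W^s(g^m(x),g)$ — and pulls it back by $g^{-m}$ to land near $y$.

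Concretely, first I would fix $\tau\in(\tau_0,1)$ and use the definition of $R_0,\tau_0$ together with $C^1$-closeness of $g$ to $f$ (choosing $\epsilon_1$ small) so that the contraction estimate along $W^s_{\mathrm{loc}}(x)$ holds with rate $\tau$ rather than $\tau_0$: there is $C'>0$ with $\operatorname{dist}(f^j(x),f^j(y))\le C'\tau^{j}\operatorname{dist}(x,y)\le C'\tau^j R_0$ for $0\le j\le m$, using that the $f$- and $g$-orbits coincide up to time $m$ and stay in the region where the bunching/contraction estimates apply (shrinking $R_0$ if necessary so that the whole local leaf and a neighbourhood of it stay in the good region). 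Second, at time $m$ the points $g^m(x)$ and $g^m(y)$ are within distance $C'\tau^m R_0$, so for $m$ large — and for all $m\ge 1$ after absorbing constants — $g^m(y)$ lies in a uniform-size foliation box for $W^s(g,\cdot)$ around $g^m(x)$; let $\tilde w$ be the point of $W^s_{\mathrm{loc}}(g^m(x),g)$ nearest $g^m(y)$, so $\operatorname{dist}(g^m(y),\tilde w)\le K\,\operatorname{dist}(g^m(x),g^m(y))\le KC'\tau^m R_0$ for a uniform $K$ coming from the bounded geometry of the transversal foliations. Third, set $w:=g^{-m}(\tilde w)\in W^s(x,g)$ (invariance of the strong-stable foliation under $g$). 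Then $w$ and $y=g^{-m}(g^m(y))$ are the backward images under $g^{-m}$ of two points whose distance at time $m$ is $O(\tau^m)$; since $g^{-1}$ expands the stable direction by at most $\chi^{-1}<\tau_0^{-1}\cdot(\text{something})$ — more carefully, since $\tilde w$ and $g^m(y)$ both lie on (or exponentially near) the $g$-strong-stable leaf of $g^m(x)$, their backward iterates under $g^{-m}$ contract, so $\operatorname{dist}(w,y)\le C_1\tau^m$ with a uniform $C_1$. This gives $w\in W^s(x,g)\cap B(y,C_1\tau^m)$ as required.

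The main obstacle is the bookkeeping in the last step: one must be careful that the backward iterates $g^{-j}(\tilde w)$ and $g^{-j}(g^m(y))$ stay close enough together that the strong-stable contraction estimate for $g$ keeps applying along the whole backward orbit of length $m$, i.e. that the pair never leaves the local foliation box before returning to time $0$. This is where the uniformity of $R_0,\tau_0$ over a $C^1$-neighbourhood of $f$ (noted just before the Lemma) and the fact that the estimate is a product telescoping back to a uniformly bounded initial separation at time $m$ get used; essentially one runs the standard graph-transform/contraction argument for strong-stable manifolds of $g$ in reverse, starting from the small separation $O(\tau^m)$ at time $m$ and observing that it only shrinks (in the relevant exponential scale) as we come back to time $0$, absorbing all constants into $C_1$. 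A secondary, more routine point is justifying the comparison between the $f$-leaf through $x$ and the $g$-leaf through $g^m(x)$: this is exactly the standard $C^1$-continuity of strong-stable laminations, so I would just cite \cite{HPS} and the earlier remark fixing $R_1\le R_0$ inside foliation boxes for $g$.
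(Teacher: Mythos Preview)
Your plan has a genuine gap in the pull-back step (your ``Third'' step and the ``main obstacle'' paragraph). You write that since $\tilde w$ and $g^m(y)$ both lie on or near the $g$-strong-stable leaf of $g^m(x)$, their backward iterates under $g^{-m}$ contract. This is backwards: distances along a strong-stable leaf \emph{expand} under $g^{-1}$, by a factor of order $\nu^{-1}$. And $g^m(y)$ is not on the $g$-stable leaf at all (it lies on the $f$-stable leaf), so the separation $\tilde w - g^m(y)$ is essentially transverse to $E^s$. A naive application of $\|Dg^{-1}\|\le \chi^{-1}$ over $m$ steps gives only $\operatorname{dist}(w,y)\le \chi^{-m}\cdot O(\nu^m)$, and since $\chi<\nu$ this blows up. Your ``graph-transform in reverse'' sentence does not fix this: there is no mechanism in your construction that forces the separation to stay in a direction where $g^{-1}$ is non-expanding. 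Relatedly, note that $\tau_0$ is not the stable contraction rate; it is defined by $\nu(x,R_0)<\tau_0\,\gamma(x,R_0)$, i.e.\ it bounds the \emph{ratio} $\nu/\gamma$. The appearance of $\tau$ rather than $\nu$ in the conclusion is exactly because one must compare the forward stable contraction against the backward center expansion.

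The paper supplies precisely this missing geometric constraint. Instead of projecting $g^m(y)$ onto $W^s(g^m(x),g)$, it fixes a $(u+c)$-dimensional disk $V$ through $y$ tangent to a center-unstable cone $K^{cu}$ that is $Dg$-invariant and on which $\|Dg\,v\|\ge \gamma(\cdot)\|v\|$. One then takes $w_1\in W^s(g^m(x),g)\cap g^m(V)$ and sets $w=g^{-m}(w_1)\in V$. Now both $g^m(y)$ and $w_1$ lie on $g^m(V)$, and the restriction $g^{-m}\colon g^m(V)\to V$ has derivative of norm at most $\gamma^{-m}$ (by the cone estimate), so
\[
\operatorname{dist}(y,w)\;\le\; C\,\gamma^{-m}\operatorname{dist}(g^m(y),w_1)\;\le\; C\,(\nu/\gamma)^{m}\;\le\; C_1\,\tau^{m}.
\]
The $cu$-disk is what pins the backward orbit to a submanifold along which $g^{-1}$ is genuinely contracting at rate $\gamma^{-1}$, turning the ratio $\nu/\gamma<\tau_0<\tau$ into the stated bound. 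Your argument can be repaired along these lines, but as written the contraction claim in the last step is not justified.
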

\begin{proof}
Since $f$ is partially hyperbolic, there exists a cone family around $E^u\oplus E^c$, $K^{cu}$, such that
\begin{enumerate}[label=(\alph*)]
\item $Df(K^{cu}(x))\subset K^{cu}(f(x))$ for every $x\in M$,
\item $K^{cu}$ is uniformly transverse to $E^s$,
\item For every $v\in K^{cu}(x)$, 
$$\left\|Df_x(v)\right\|\geq \tau\,\nu(x,R_0) \left\|v\right\|,$$
for every $x\in M$. 
\end{enumerate}

Moreover, all the above are still valid for every $g$ $C^1$-close enough to $f$. 

Let $V$ be a topological disk of dimension $u+c$ passing through $y$ such that $TV\subset K^{cu}$. Since $f^{m}(x)=g^{m}(x)$ and $f^{m}(y)=g^{m}(y)$, we have $$dist(g^{m}(x), g^{m}(y))< \nu(x,R_0)^{m}.$$ Then, there exists $C_1>0$, depending only on $f$, and $w_1\in W^s(g^{m}(x),g)\cap g^{m}(V)$ such that $$dist(g^{m}(x), w_1)< C_1\: \nu(x,R_0)^{m}.$$ Define $w=g^{-m}(w_1)$, then $w\in W^s(x,g)$ and $$dist (y,w)< C_1\, \tau^{m}.$$
\end{proof} 
There is an analogous statement for the strong-unstable foliation $W^u$. Using these results we are able to prove the following Lemma.

\begin{lema}\label{nodos} 
If $\zeta=[z_0,...,z_N]$ is the $su$-path given by Proposition \ref{pivot} for $f$ and $p$ and $f_k$ is given by Lemma \ref{pertfinal} for some $\epsilon>0$, then there exist $C_1>0$, $\tau\in(0,1)$ and $k_1\in \mathbb{N}$ such that for every $k\geq k_1$ and $i\in \{1,... ,N\}$ there exist points $w_i^k\in M$ with $$w_i^k\in W^{*}(z_{i-1},f_k)\cap B(z_i, C_1\, \tau^{\sigma k}).$$  Here, the constant $\sigma$ is given by Equation (\ref{deltak}) and 
\begin{equation*}
W^{*}(z_{i-1},f_k)=\begin{cases}
     W^{s}(z_{i-1},f_k)&\text{if}\quad z_i\in W^s(z_{i-1},f),\\
		 W^{u}(z_{i-1},f_k)&\text{if}\quad z_i\in W^u(z_{i-1},f).
		\end{cases}
\end{equation*}
\end{lema}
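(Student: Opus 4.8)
The plan is to apply Lemma \ref{dw} (and its strong-unstable analogue) successively at each node $z_i$ of the $su$-path, using Proposition \ref{pivot} to certify that the perturbation $f_k$ agrees with $f$ along the relevant orbit segments for a time proportional to $k$. Fix $\epsilon>0$ and let $f_k$ be the perturbation from Lemma \ref{pertfinal}, supported in $B_{\delta_k}(z_l)$ with $\delta_k=c/(1+(\sigma k)^2)$. Fix $\tau\in(\tau_0,1)$ and let $C_1,\epsilon_1$ be the constants from Lemma \ref{dw}. Since $f_k\to f$ in the $C^1$ topology (Lemma \ref{pertfinal}), there is $k_1'$ so that $f_k$ is $\epsilon_1$-close to $f$ in $C^1$ for all $k\geq k_1'$; by Remark \ref{box} and Remark \ref{box2} each leg satisfies $z_{i-1}\in W^{*}_{loc}(z_i)$ and $dist(z_{i-1},z_i)<R_1\leq R_0$, so the hypotheses of Lemma \ref{dw} are in place at each node.

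The key point is to produce, for each $i$, an integer $m=m(k,i)$, comparable to $\sigma k$, such that $f^j(z_{i-1})=f_k^j(z_{i-1})$ and $f^j(z_i)=f_k^j(z_i)$ for all $1\leq j\leq m$. This is exactly where Proposition \ref{pivot} enters: $f_k$ differs from $f$ only on $B_{\delta_k}(z_l)$, and Proposition \ref{pivot} guarantees $dist(f^j(z_i),z_l)\geq c/(1+j^2)$ for all $(j,i)\neq(0,l)$. Hence $f^j(z_i)\notin B_{\delta_k}(z_l)$ as long as $c/(1+j^2)\geq \delta_k=c/(1+(\sigma k)^2)$, i.e. as long as $j\leq \sigma k$. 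One must take a little care near $i=l$: for that node the bound in Proposition \ref{pivot} fails only at $j=0$, and $f_k(z_l)=f(z_l)$ by Lemma \ref{pertfinal}(b), so $z_l$ and its forward iterates up to time $\sigma k$ are still untouched (and similarly the orbit stays away from the support once $j\geq 1$). Thus we may take $m=\lfloor\sigma k\rfloor$ uniformly in $i$, for $k$ large. With this $m$, Lemma \ref{dw} (in the strong-stable case, or its $W^u$ analogue in the strong-unstable case, according to the type of the $i$-th leg) yields $w_i^k\in W^{*}(z_{i-1},f_k)$ with $dist(z_i,w_i^k)<C_1\tau^{m}\leq C_1\tau^{\sigma k-1}$; absorbing the factor $\tau^{-1}$ into $C_1$ gives the claimed bound $dist(z_i,w_i^k)<C_1\tau^{\sigma k}$. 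Set $k_1=\max\{k_1',k_0\}$ together with whatever lower bound on $k$ is needed so that $\lfloor\sigma k\rfloor\geq 1$ and $C_1\tau^{\sigma k}<R_0$.

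I expect the main obstacle to be the bookkeeping at the special node $z_l$ and, more generally, making the time estimate $j\le \sigma k$ genuinely uniform over all $i\in\{1,\dots,N\}$ and over both legs meeting at $z_i$: one needs $f^j(z_{i-1})$ and $f^j(z_i)$ to \emph{both} avoid $B_{\delta_k}(z_l)$ for $1\le j\le m$, which is why $\sigma$ is allowed to depend on $N$ and $n_p$, and why the recurrence bound must be applied to every node simultaneously. The rest—combining Lemma \ref{dw} with the convergence $f_k\to f$ in $C^1$ and the leg-length control from Remark \ref{box}—is routine.
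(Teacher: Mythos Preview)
Your proposal is correct and follows essentially the same approach as the paper: apply Lemma \ref{dw} (or its unstable analogue) with $m=\sigma k$, using Remarks \ref{box} and \ref{box2} for the local-leaf hypothesis and Proposition \ref{pivot} together with Lemma \ref{pertfinal} to guarantee $f^j_k(z_{i-1})=f^j(z_{i-1})$ and $f^j_k(z_i)=f^j(z_i)$ for $1\le j\le \sigma k$. Your treatment is in fact slightly more careful than the paper's (handling $\lfloor\sigma k\rfloor$ and the exceptional pair $(j,i)=(0,l)$ explicitly via $f_k(z_l)=f(z_l)$), but the argument is the same.
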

\begin{proof}
Fix $i\in \{1,...,N\}$ and suppose $z_{i}\in W^s(z_{i-1},f)$. Notice we can assume $z_{i}\in W^s_{loc}(z_{i-1},f)\cap B(z_{i-1}, R_0)$. This is a consequence of Remark \ref{box} and Remark \ref{box2}. Then, we are going to apply Lemma \ref{dw} for $x=z_{i-1}$ and $y=z_i$. The value of $k_1$ is chosen in order to have $f_k$ $\epsilon_1$-close to $f$ in the $C^1$ topology for every $k\geq k_1$. 

By Equation (\ref{deltak}) and Lemma \ref{pertfinal}, $$f^j_k(z_{i-1})=f^j(z_{i-1}) \quad \text{and} \quad f^j_k(z_{i})=f^j(z_{i}),$$ for every $1\leq j\leq \sigma\,k.$ 

Then, for every $k\geq k_1$ there exists $w^k_i\in W^s(z_{i-1},f_k)\cap B(z_i, C_1\, \tau^{\sigma k})$. The case for $z_{i-1}\in W^u(z_i)$ is analogous.
\end{proof}

By the results in \cite{PSW1, PSW2}, the $\alpha$-pinched condition implies that the $W^s$ and $W^u$ holonomies are $\alpha$-H\"older. Moreover, there exists a $C^2$ neighborhood of $f$, $\mathcal{V}(f)$, such that the $W^s$ and $W^u$ holonomies for every $g\in \mathcal{V}(f)$ are $\alpha$-H\"older with uniform H\"older constant. For this see \cite{W}. 

We can suppose that $\epsilon_0>0$ in Lemma \ref{pertfinal} was chosen small enough in order to guarantee that $f_k\in \mathcal{V}(f)$.

Fix $k\geq k_1$ and define $z_1^k=w_1^k$ by Lemma \ref{nodos}. Then, $$dist(z^k_1, z_1) < C_1\, \tau^{\sigma k}.$$ Suppose $z_2\in W^s(z_1,f)$. Let $R_1>0$ be the constant in Remark \ref{box} and $w^k_2$ be defined by Lemma \ref{nodos}. If $k$ is big enough, $z^k_1, w^k_2\in B(z_1, R_1)$. Then, $z_2$, $z^k_1$ and $w^k_2$ are all in the same foliation box for $W^s(f_k)$. Denote $U$ this foliation box and let $\Sigma(x)$ be a smooth foliation by admissible transversals defined in $U$. Define $z^k_2$ as the only point of intersection of $W^s(z_1^k,f_k)$ with $\Sigma({w^k_2})$. Then, there exists $\widehat{C}_1=\widehat{C}_1(f)>0$ such that $$dist(w^k_2, z^k_2)<\widehat{C}_1\; dist(z_1^k,z_1)^{\alpha}.$$ Then, $$dist(z^k_2,z_2)< dist(z_2, w^k_2) + dist(w^k_2,z^k_2) < C_1\, \tau^{\sigma k} +  \widehat{C}_1\, C_1^{\alpha}\, \tau^{\sigma k \alpha}.$$ If $z_2\in W^u(z_1,f)$, we proceed in the same way using a foliation box for the strong-unstable foliation. 

Repeating the argument for all the nodes of $\zeta$ we have the following:
\begin{prop}\label{su} If $\zeta=[z_0,...,z_N]$ is the $su$-path given by Proposition \ref{pivot} for $f$ and $p$ and $f_k$ is given by Lemma \ref{pertfinal} for some $\epsilon>0$, then there exist $C_2>0$, $\tau\in (0,1)$ and $k_2\in \mathbb{N}$ such that for every $k\geq k_2$ there exists a $su$-path for $f_k$, $\zeta_k=[z^k_0,...,z^k_N]$, with $z^k_0=z_0=p$ and such that $$dist(z_i, z^k_i)< C_2\, \tau^{\sigma_1 k},$$ for every $i\in \{1,..., N\}$, where $\sigma_1=\sigma\,\alpha^{N}$ and $\sigma$ is given by Equation (\ref{deltak}).
\end{prop}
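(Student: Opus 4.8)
The plan is to track the $su$-path node by node and combine the two kinds of estimates already developed in this section: the ``how long do the orbits stay away from the support'' estimate from Proposition \ref{pivot}, and the H\"older-holonomy estimate coming from the $\alpha$-pinched condition. First I would fix $k$ large and, for each $i\in\{1,\dots,N\}$, invoke Lemma \ref{nodos} to produce the auxiliary points $w^k_i\in W^{*}(z_{i-1},f_k)$ with $dist(z_i,w^k_i)<C_1\tau^{\sigma k}$; this is the raw input and it is already exponentially small in $k$. The point $w^k_i$ sits on the correct strong leaf through $z_{i-1}$, but not through the \emph{perturbed} previous node $z^k_{i-1}$, so the construction of $\zeta_k$ is necessarily inductive.

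The inductive step is exactly the computation already spelled out in the excerpt for $i=2$: set $z^k_0=z_0=p$; assuming $z^k_{i-1}$ has been built with $dist(z_{i-1},z^k_{i-1})<C_2\tau^{\sigma_1 k}$, observe that for $k$ large all three points $z_{i-1}$, $z^k_{i-1}$ and $w^k_i$ lie in a common foliation box $U$ for $W^{*}(f_k)$ (using Remark \ref{box}, Remark \ref{box2} and $f_k\in\mathcal V(f)$), fix a smooth transversal foliation $\Sigma(\cdot)$ in $U$, and define $z^k_i$ as the unique intersection of $W^{*}(z^k_{i-1},f_k)$ with $\Sigma(w^k_i)$. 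The $\alpha$-H\"older continuity of the $W^s$ and $W^u$ holonomies for all $g\in\mathcal V(f)$ (with uniform constant, by \cite{PSW1,PSW2,W}) gives $dist(w^k_i,z^k_i)<\widehat C_1\,dist(z^k_{i-1},z_{i-1})^{\alpha}$, and hence by the triangle inequality
\begin{equation*}
dist(z_i,z^k_i)< C_1\,\tau^{\sigma k} + \widehat C_1\,\big(C_2\,\tau^{\sigma_1 k}\big)^{\alpha}.
\end{equation*}
Since $\sigma_1=\sigma\alpha^N\le\sigma$, each application of the holonomy estimate costs one factor of $\alpha$ in the exponent, so after $N$ steps the worst exponent that survives is $\sigma\alpha^{N}=\sigma_1$; absorbing the finitely many constants $C_1,\widehat C_1$ and the number of steps $N$ into a single $C_2>0$, and choosing $\tau\in(0,1)$ and $k_2\ge k_1$ appropriately, yields $dist(z_i,z^k_i)<C_2\tau^{\sigma_1 k}$ for all $i$. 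One also checks the combinatorial type of each leg is preserved: $z^k_i\in W^{*}(z^k_{i-1},f_k)$ with the same choice of $s$ or $u$ as in $\zeta$, so $\zeta_k=[z^k_0,\dots,z^k_N]$ is genuinely a $su$-path for $f_k$, and it closes up at $z^k_0=p$ by construction (while $z^k_N$ need not equal $p$, which is why only $z^k_0=z_0=p$ is asserted).

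The main obstacle is bookkeeping the exponents correctly through the induction: one must make sure that iterating the H\"older estimate does not degrade the exponential rate below $\sigma_1$, which is precisely why the technical constant $\sigma$ was front-loaded in Equation (\ref{deltak}) to absorb the loss of the factor $\alpha^{N}$, and why $R_1\le R_0$ was arranged in Remark \ref{box2} so that all the auxiliary points genuinely land in a single foliation box at every stage. The remaining points are routine: the uniformity of the H\"older constants over $\mathcal V(f)$ is cited from \cite{W}, and $k_2$ is enlarged beyond $k_1$ to guarantee both that $f_k$ is $C^1$-close enough to $f$ (so Lemma \ref{nodos} applies) and that $dist(z_{i-1},z^k_{i-1})$ and $dist(z_i,w^k_i)$ are small enough to keep everything inside the foliation boxes.
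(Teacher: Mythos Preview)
Your proposal is correct and follows essentially the same route as the paper: the paper also sets $z^k_1=w^k_1$ from Lemma \ref{nodos}, then inductively defines $z^k_i$ as the intersection of $W^{*}(z^k_{i-1},f_k)$ with the transversal $\Sigma(w^k_i)$ inside a common foliation box, uses the uniform $\alpha$-H\"older holonomy estimate to get $dist(w^k_i,z^k_i)<\widehat C_1\,dist(z^k_{i-1},z_{i-1})^{\alpha}$, and then simply says ``repeating the argument for all the nodes of $\zeta$''. Your write-up is in fact more explicit than the paper about why the surviving exponent after $N$ steps is $\sigma\alpha^{N}=\sigma_1$ and about the fact that $\zeta_k$ need not close up at $z^k_N$.
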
 
Although $\zeta$ is a closed $su$-path, $z_0=z_N$, the $su$-path for $f_k$ given by this Proposition is not necessarily closed. We can have $z^k_0\neq z^k_N$.

\subsection{Angle estimations}

Now we study how the splitting in the tangent bundle is changing under the variation of the diffeomorphism. 

Let $V$ be a vector space with inner product and let $E_1$ and $E_2$ be subspaces of $V$. Define $\angle\; (E_1, E_2)=\max \{\xi_1, \xi_2\}$ where $$\xi_1=\sup\limits_{x\in E_1, x\neq 0}\; \; \inf\limits_{y\in E_2, y\neq 0} \angle \; x,y,$$ and $\xi_2$ is defined analogously, changing the places of $E_1$ and $E_2$.

The relation between this definition and the distance of subspaces defined in Section 3 is given by $$\sin \,\angle\, (E_1,E_2)=dist(E_1,E_2).$$

By Equation (\ref{ph}), there exist $C_3>0$ and $\theta_0>0$ such that for every $F^{u+c}$ and $F^{c+s}$ distributions of dimension $u+c$ and $c+s$ respectively, with
\begin{equation}\label{hipot}
\max\{ \angle(E^{u+c}, F^{u+c}), \angle (E^{c+s}, F^{c+s})\}\leq \theta_0,
\end{equation}
we have
\begin{equation}\label{unest}
\angle (Df_x^{j}(E_x^{u+c}), Df_x^{j}(F_x^{u+c})) \leq C_3\,\rho^j,
\end{equation} 
 and
\begin{equation}\label{est}
\angle (Df_x^{-j}(E_x^{c+s}), Df_x^{-j}(F_x^{c+s})) \leq C_3\,\rho^j.
\end{equation} 
for every $x\in M$ and $j\geq 0$, where $$\rho=\max\limits_{x\in M}\left( \max \left\{\nu(x)/\gamma(x) , \widehat{\nu}(x)/\widehat{\gamma}(x)\right\}\right).$$
\begin{lema}\label{p1p2} There exist $C_3>0$, $\rho\in (0,1)$ and $\epsilon_3>0$ such that if $g$ is $\epsilon_3$-close to $f$ in the $C^1$ topology, $g=f$ outside some compact set $I$ and there exists $m\in \mathbb{N}$ such that $f^j(x)$ do not enter $I$ for every $j\in \mathbb{Z}$ with $\left|j\right|\leq m$, then 
\begin{equation}\label{p1}
\angle (E^{u+c}(x,f), E^{u+c}(x,g) \leq C_3\,\rho^{m}
\end{equation}
and
\begin{equation}\label{p2}
\angle (E^{c+s}(x,f), E^{c+s}(x,g)) \leq C_3\,\rho^{m}
\end{equation} 
\end{lema}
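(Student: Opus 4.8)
The plan is to deduce both estimates from the exponential‑contraction bounds \eqref{unest} and \eqref{est}, after observing that $f$ and $g$, together with their derivatives, coincide along the portion of the orbit of $x$ that stays outside $I$.

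\textbf{Step 1: bookkeeping along the orbit.} Since $I$ is compact, $g=f$ on the \emph{open} set $M\setminus I$, so in fact $Dg_z=Df_z$ for every $z\notin I$, not merely $g(z)=f(z)$. Combining this with the hypothesis $f^{j}(x)\notin I$ for $|j|\le m$, a straightforward induction on $j$ gives $g^{j}(x)=f^{j}(x)$ for all $|j|\le m$ and $Dg_{f^{j}(x)}=Df_{f^{j}(x)}$ for all $|j|\le m$. Writing $y:=f^{-m}(x)$ and $y':=f^{m}(x)$, this yields $y=g^{-m}(x)$, $y'=g^{m}(x)$, and the cocycle identities $Df^{m}_{y}=Dg^{m}_{y}$ and $Df^{-m}_{y'}=Dg^{-m}_{y'}$, since in each of these products the linear maps are taken at points of the form $f^{i}(x)$ with $|i|\le m$, where the two derivatives agree.

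\textbf{Step 2: closeness of the bundles.} Shrinking $\epsilon_3$ if necessary, I use that partial hyperbolicity is a $C^1$‑open condition and that the bundles $E^{u+c}$ and $E^{c+s}$ depend continuously on the diffeomorphism, so that for every $g$ that is $\epsilon_3$‑close to $f$ in the $C^1$ topology one has $\angle(E^{u+c}(z,f),E^{u+c}(z,g))\le\theta_0$ and $\angle(E^{c+s}(z,f),E^{c+s}(z,g))\le\theta_0$ for all $z\in M$, where $\theta_0$, $C_3$, $\rho$ are the constants appearing in \eqref{hipot}--\eqref{est}. In particular the distributions $F^{u+c}:=E^{u+c}(\cdot,g)$ and $F^{c+s}:=E^{c+s}(\cdot,g)$ satisfy hypothesis \eqref{hipot} everywhere, so \eqref{unest} and \eqref{est} apply to them at any point.

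\textbf{Step 3: invoking the contraction estimates.} Applying \eqref{unest} at the point $y$ with $j=m$ gives $\angle\bigl(Df^{m}_{y}(E^{u+c}(y,f)),\,Df^{m}_{y}(E^{u+c}(y,g))\bigr)\le C_3\rho^{m}$. By $Df$‑invariance of $E^{u+c}(\cdot,f)$ the first argument equals $E^{u+c}(f^{m}(y),f)=E^{u+c}(x,f)$, and by $Dg$‑invariance of $E^{u+c}(\cdot,g)$ together with $Df^{m}_{y}=Dg^{m}_{y}$ and $g^{m}(y)=x$, the second argument equals $Dg^{m}_{y}(E^{u+c}(y,g))=E^{u+c}(x,g)$; this is \eqref{p1}. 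Symmetrically, applying \eqref{est} at $y'$ with $j=m$, using $Df^{-m}_{y'}=Dg^{-m}_{y'}$ and $g^{-m}(y')=x$, and invoking the $Df$‑ and $Dg$‑invariance of $E^{c+s}$, yields \eqref{p2}. The constants $C_3$, $\rho$ are exactly those of \eqref{unest}--\eqref{est}.

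The proof is essentially pure bookkeeping, and the only point that requires care is Step 1: one must make sure that the hypothesis ``the orbit of $x$ avoids $I$ for $|j|\le m$'' really forces equality of the \emph{full} derivative products $Df^{\pm m}$ and $Dg^{\pm m}$ at the endpoints $y$ and $y'$ — this is where the compactness of $I$ (whence the openness of $M\setminus I$, whence equality of derivatives there, not just of the maps) enters. Once that is in place, Lemma \ref{p1p2} is an immediate consequence of the domination estimates \eqref{unest}--\eqref{est} already recorded above.
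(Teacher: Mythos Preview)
Your proof is correct and follows essentially the same approach as the paper: both arguments note that $f$ and $g$ (and their derivatives) agree along the orbit segment $\{f^{j}(x):|j|\le m\}$, verify that $E^{u+c}(\cdot,g)$ and $E^{c+s}(\cdot,g)$ satisfy the cone condition \eqref{hipot} for $g$ sufficiently $C^1$-close to $f$, and then rewrite the angle at $x$ via invariance so that \eqref{unest} and \eqref{est} apply directly. Your Step~1 makes explicit the role of compactness of $I$ (hence openness of $M\setminus I$, hence equality of derivatives there), which the paper uses without comment, but otherwise the two proofs are the same.
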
 
\begin{proof}
For every $j\in \mathbb{Z}$ with $\left|j\right|\leq m$, $f^j(x)\notin I$ implies that $f^j(x)=g^j(x)$ and $Df(f^j(x))=Dg(g^j(x))$. Moreover, if $g$ is $C^1$-close enough to $f$, the inequality in Equation (\ref{hipot}) holds for $F^{u+c}=E^{u+c}(g)$ and $F^{c+s}=E^{c+s}(g)$. Therefore,  
\begin{equation*}
\begin{aligned}
&\angle (E^{u+c}(x,f), E^{u+c}(x,g))\\
&=\angle (Df^{m}(E^{u+c}(f^{-m}(x),f)), Dg^{m}(E^{u+c}(g^{-m}(x),g)))\\
&=\angle (Df^{m}(E^{u+c}(f^{-m}(x),f)), Df^{m}(E^{u+c}(f^{-m}(x),g))).
\end{aligned}
\end{equation*}
Finally, we conclude Equation (\ref{p1}) by Equation (\ref{unest}). The stable case is analogous: we use Equation (\ref{est}) to prove Equation (\ref{p2}). \\
\end{proof} 
This provides a result about how the center bundle is changing with the perturbation. 
\begin{prop}\label{angulos} If $\zeta=[z_0,...,z_N]$ is the $su$-path given by Proposition \ref{pivot} for $f$ and $p$ and $f_k$ is given by Lemma \ref{pertfinal} for some $\epsilon>0$, then there exist $C_4>0$, $\rho\in (0,1)$ and $k_4\in \mathbb{N}$ such that for every $k\geq k_4$ and $i\in \{1,...,N\}$, we have
\begin{equation}\label{p3}
\angle (E^{c}(z_i,f), E^{c}(z_i,f_k)) \leq C_4\,\rho^{\sigma k-1},
\end{equation} 
where $\sigma$ is given by Equation (\ref{deltak}).
\end{prop}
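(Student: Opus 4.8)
The plan is to deduce Proposition \ref{angulos} from Lemma \ref{p1p2} together with the recurrence estimate of Proposition \ref{pivot}. The key point is that the center bundle at a point is the intersection of the two distributions $E^{u+c}$ and $E^{c+s}$, so once we control the variation of both of these distributions under the perturbation, we control the variation of $E^c$. More precisely, if $\angle(E^{u+c}(z_i,f),E^{u+c}(z_i,f_k))$ and $\angle(E^{c+s}(z_i,f),E^{c+s}(z_i,f_k))$ are both bounded by some small quantity $\varepsilon$, then, since $E^{u+c}$ and $E^{c+s}$ are uniformly transverse (their angle is bounded away from zero on all of $M$), a standard linear-algebra fact gives $\angle(E^c(z_i,f),E^c(z_i,f_k))\leq C\,\varepsilon$ for a constant $C$ depending only on the transversality bound, hence only on $f$.

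First I would fix $i\in\{1,\dots,N\}$ and apply Lemma \ref{p1p2} with $x=z_i$, with $g=f_k$, and with $I=B_{\delta_k}(z_l)$ the support of the perturbation $f_k$ given by Lemma \ref{pertfinal}. I then need to check the hypothesis of Lemma \ref{p1p2}: that there exists $m\in\mathbb{N}$ with $f^j(z_i)\notin B_{\delta_k}(z_l)$ for all $|j|\leq m$. By Proposition \ref{pivot}, $\mathrm{dist}(f^j(z_i),z_l)\geq c/(1+j^2)$ for all $(j,i)\neq(0,l)$, and by Equation (\ref{deltak}), $\delta_k=c/(1+(\sigma k)^2)$; hence $f^j(z_i)\notin B_{\delta_k}(z_l)$ as long as $c/(1+j^2)\geq\delta_k$, i.e. $1+j^2\leq 1+(\sigma k)^2$, i.e. $|j|\leq\sigma k$. (For $i=l$ the point $z_l$ itself is fixed by $f_k$ and $Df_k(z_l)$ differs from $Df(z_l)$ only by the rotation $A_{\beta_k}$ in the $E^c$ direction, so that case can be handled separately or absorbed, but in any event the estimate for $j\neq 0$ still applies.) So we may take $m=\lfloor\sigma k\rfloor\geq\sigma k-1$, and Lemma \ref{p1p2} yields $\angle(E^{u+c}(z_i,f),E^{u+c}(z_i,f_k))\leq C_3\,\rho^{\sigma k-1}$ and likewise for $E^{c+s}$, provided $k$ is large enough that $f_k$ is $\epsilon_3$-close to $f$ in $C^1$; such a $k_4$ exists because $f_k\to f$ in $C^1$ by Lemma \ref{pertfinal}.

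Then I would combine the two bounds via the transversality estimate to obtain $\angle(E^c(z_i,f),E^c(z_i,f_k))\leq C_4\,\rho^{\sigma k-1}$ with $C_4$ depending only on $f$ (absorbing $C_3$ and the transversality constant), uniformly in $i\in\{1,\dots,N\}$, which is exactly Equation (\ref{p3}). I do not expect a serious obstacle here: the only mildly delicate points are (i) verifying that the recurrence bound of Proposition \ref{pivot} translates into the claimed value $m\approx\sigma k$ of non-recurrence time for the support $B_{\delta_k}(z_l)$, which is a direct comparison of the two explicit formulas, and (ii) the bookkeeping to pass from angles of the two $(u+c)$- and $(c+s)$-distributions to the angle of their intersection $E^c$, which is the elementary linear-algebra lemma about intersections of uniformly transverse subspaces. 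The dependence of the constant $\sigma$ on $\upsilon,\alpha,n_p,N$ plays no role at this stage; it only matters later, in Corollary \ref{holfinal}, where one needs $\rho^{\sigma k}$ to dominate the polynomially small perturbation size $\delta_k^{r-1}$.
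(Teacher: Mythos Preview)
Your proposal is correct and follows essentially the same approach as the paper: apply Lemma \ref{p1p2} at each node $z_i$ using the non-recurrence estimate of Proposition \ref{pivot} to get $m=\sigma k-1$, and then pass from the bounds on $E^{u+c}$ and $E^{c+s}$ to a bound on their transverse intersection $E^c$. The paper also singles out the node $z_l$ for separate treatment, observing (as you do) that although $Df_k(z_l)\neq Df(z_l)$, the factor $A_{\beta_k}$ preserves each of $E^s$, $E^c$, $E^u$, so the contraction argument behind Lemma \ref{p1p2} still goes through for $E^{u+c}(z_l)$ and $E^{c+s}(z_l)$. One small wording slip: $z_l$ is not a fixed point of $f_k$; what you mean (and what the paper uses) is $f_k(z_l)=f(z_l)$ together with $Df_k(z_l)=Df(z_l)\circ A_{\beta_k}$.
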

\begin{proof}
If $i\neq l$, that is, $z_i$ is not the point where we did the perturbation, then by Equation (\ref{deltak}) and Lemma \ref{pertfinal} we have that $f^j(z_i)$ do not enter the support of the perturbation for every $j\in \mathbb{Z}$ with $\left|j\right|\leq \sigma k - 1$. Then, we can apply the Lemma above for $E^{u+c}$ and $E^{c+s}$ with $m=\sigma k - 1$. The Proposition follows because $E^c$ is the intersection of this two transversal bundles.
 
In order to prove Equation (\ref{p3}) for $z_l$, we need to prove a Lemma similar to Lemma \ref{p1p2} for $E^{u+c}(z_l)$ and $E^{c+s}(z_l)$, using the fact that $Df_k(z_l)=Df(z_l)\circ A_{\beta_k}$ and $A_{\beta_k}$ leaves invariant the subbundles in the splitting $E^u_{z_l}\oplus E^c_{z_l}\oplus E^s_{z_l}$. Then, we conclude the Proposition with the same argument than for the other nodes.   
\end{proof} 

\subsection{Summary of the results}

Let $*\in \{\mu, \omega\}$, $r\geq 2$, $f\in B^r_{*}(M)$ and suppose $p$ is a periodic point for $f$. Proposition \ref{pivot} gives a $su$-path from $p$ to itself, $\zeta=[z_0,...,z_N]$, and a node with slow recurrence $z_l$.

Let $f_k$ be given by Lemma \ref{pertfinal} for some $\epsilon>0$. Then, $f_k$ satisfies Propositions \ref{su} and \ref{angulos} for every $k\geq \max \{k_2,k_4\}$. Moreover, if $k$ is big enough, $f^j(p)\notin B_{\delta_k}(z_l)$ for every $j\in \{0,...,n_p-1\}$.

Let $$\zeta_1=[z_0,...,z_l]\quad \text{and} \quad \zeta_2=[z_N,..., z_l].$$ We can suppose $z_{l-1}\in W^s(z_l)$ and $z_l\in W^u(z_{l+1})$. 

In the notation of Proposition \ref{su}, define $$z_k=z^{k}_l, \quad p_k=z^{k}_N,$$ $$\zeta^k_1=[p,...,z_k]\quad \text{and} \quad \zeta^k_2=[p_k,...,z_k].$$  

In the following, for $i\in \{1,2\}$ $H_{\zeta_i}$ will denote the holonomy defined by $\zeta_i$ for $F=Df\vert E^c(f)$ and $H_{\zeta^k_i}$ the holonomy defined by $\zeta^k_i$ for $F_k=Df_k\vert E^c(f_k)$. Then, $$H_{\zeta_1}:E^c(p)\to E^c(z_l, f), \quad \quad H_{\zeta_2}:E^c(p)\to E^c(z_l, f),$$ $$H_{\zeta_1^k}:E^c(p)\to E^c(z_k, f_k)\quad \text{and} \quad H_{\zeta_2^k}:E^c(p_k, f_k)\to E^c(z_k, f_k).$$

Using Corollary \ref{compo} combined with Propositions \ref{su} and \ref{angulos} we can estimate the variation in the holonomies:
\begin{cor}\label{holfinal} There exist $C>0$, $\lambda\in (0,1)$ and $K\in \mathbb{N}$ such that for every $k\geq K$, $a\in E^c(p)$ and $a_k\in E^c(p_k,f_k)$ we have
\begin{equation}\label{rot}
d( R_{\beta_k}^{-1}\circ H_{\zeta_1}(a), H_{\zeta_1^k}(a))\leq C\, \lambda^k,
\end{equation}
and
\begin{equation}\label{direc}
d(H_{\zeta_2}(a), H_{\zeta_2^k}(a_k))\leq C\, \lambda^k + C\, d(a, a_k),
\end{equation}
where $R_{\beta_k}:E^c(z_l,f)\to E^c(z_l,f)$ is the rotation of angle $\beta_k>0$ defined by Lemma \ref{pertfinal}.
\end{cor}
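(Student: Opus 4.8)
The plan is to run Corollary~\ref{compo} along $\zeta_1$ and $\zeta_2$ separately, feeding in the exponential estimates of Propositions~\ref{su} and~\ref{angulos} together with a quantitative refinement of Proposition~\ref{continuity}. The reason the rotation survives in (\ref{rot}) but not in (\ref{direc}) is the asymmetric position of the perturbed node $z_l$: since $Df_k(z_l)=Df(z_l)\circ A_{\beta_k}$ and $A_{\beta_k}$ acts on $E^c(z_l)$ as $R_{\beta_k}$, the extra factor enters the cocycle only \emph{at $z_l$ in the forward iterate}, i.e.\ it occurs as the innermost factor $F(z_l)$ of the product $F_k^n(z_l)=F_k(f^{n-1}(z_l))\circ\dots\circ F_k(z_l)$ that builds the stable holonomy $H^s_{\cdot,z_l}(f_k)$ --- and the last leg of $\zeta_1$ is a strong-stable leg, $z_{l-1}\in W^s(z_l)$ --- whereas it does \emph{not} occur in $F_k^{-n}(z_l)=F_k^{-1}(f^{-n+1}(z_l))\circ\dots\circ F_k^{-1}(z_l)$, whose derivative factors are evaluated at $f^{-1}(z_l),f^{-2}(z_l),\dots$ only, and which builds the unstable holonomy $H^u_{\cdot,z_l}(f_k)$ --- and the last leg of $\zeta_2$ is a strong-unstable leg, $z_l\in W^u(z_{l+1})$.

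The key device is to let the estimation window grow with $k$. The only non-uniform ingredient of Section~3 is the tail bound $\|H^{*}_{x,y}(g)-A_n(g,x,y)\|\le C\,\nu^n(x)^{\alpha(1-\varsigma)}$ from Remark~\ref{bound} and the proof of Proposition~\ref{continuity}; rather than invoking Proposition~\ref{continuity} as a black box, I would re-run its proof with $n=n(k):=\lfloor c_0\sigma k\rfloor$ for a small fixed $c_0\in(0,1)$. Since $\max_M\nu<1$, the tails then become $\le C\,\theta^{\sigma k}$ with $\theta\in(0,1)$, hence exponentially small in $k$. Within such a window the slow-recurrence inequality of Proposition~\ref{pivot}, $dist(f^j(z_i),z_l)\ge c/(1+j^2)$, together with $\delta_k=c/(1+(\sigma k)^2)$ and $c_0<1$, forces $f^j(z_i)\notin B_{\delta_k}(z_l)$ for every node $z_i$ of $\zeta$ and every $|j|\le n(k)$; the same holds for the continuations $z_i^k$, whose orbits are $\tau^{\sigma_1 k}$-close to those of $z_i$ by Proposition~\ref{su} and the proof of Lemma~\ref{nodos}. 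Consequently, over this window $Df_k$ and $Df$ coincide at every derivative-evaluation point of the cocycles involved, with the single exception of the base point of a forward-type leg landing on the perturbed node (namely $z_l$ itself for $f$, and its continuation $z_k$ for $f_k$), where the discrepancy is $Df(z_l)\circ A_{\beta_k}$ up to an error of size $\|D^2 f_k\|\,dist(z_l,z_k)$, which is again exponentially small in $k$; and the angles $\angle(E^c(f^j(z_i),f),E^c(f^j(z_i),f_k))$ stay $\le C_3\rho^{\sigma k-|j|}$ for $|j|\le n(k)$ by the argument of Lemma~\ref{p1p2}.

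For $\zeta_2$ one applies Corollary~\ref{compo} directly to $\zeta_f=\zeta_2=[z_N,\dots,z_l]$ and $\zeta_g=\zeta_2^k=[p_k,\dots,z_k]$ for $f$ and $f_k$, with $a$ at $p=z_N$ and $a_k$ at $p_k$. The products $\prod_j\|H_{x_j}\|$ are bounded by $(1+CR_1^\alpha)^N$ uniformly in $k$, by the remark following Corollary~\ref{compo} (recall $dist(z_{i-1},z_i)<R_1$), so it suffices to bound each $\psi(H_{x_{i+1}})$; since no leg of $\zeta_2$ is a forward-type leg landing on $z_l$, plugging the estimates of the previous paragraph into the proof of Proposition~\ref{continuity} gives $\psi(H_{x_{i+1}})\le C'\lambda_0^{k}$ with $\lambda_0\in(0,1)$, and summing the $N$ terms yields (\ref{direc}), the extra $C\,d(a,a_k)$ being the last summand $\prod_j\|H_{x_j}\|\,d(a,a_k)$ of Corollary~\ref{compo}, present because the two paths start at the distinct points $p\ne p_k$. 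For $\zeta_1$ one cannot apply Corollary~\ref{compo} blindly, since the last leg carries the genuine (only polynomially small) rotation $R_{\beta_k}$; instead I would peel it off. Write $H_{\zeta_1}=H^s_{z_{l-1},z_l}(f)\circ H_\eta$ and $H_{\zeta_1^k}=H^s_{z_{l-1}^k,z_k}(f_k)\circ H_{\eta_k}$, with $\eta=[z_0,\dots,z_{l-1}]$, $\eta_k=[z_0^k,\dots,z_{l-1}^k]$. The prefixes see no rotation, so Corollary~\ref{compo} applied to $(\eta,\eta_k)$ gives $d(H_\eta(a),H_{\eta_k}(a))\le C\lambda^k$ (here $z_0^k=z_0=p$, so there is no $d(a,a_k)$ term). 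For the last leg, expand $A_n(f_k,z_{l-1}^k,z_k)=F_k^n(z_k)^{-1}\circ P_{E^c(f_k^n(z_k),f_k)}\circ\pi^n\circ F_k^n(z_{l-1}^k)$ with $n=n(k)$; by the previous paragraph, and after identifying the relevant center fibres by parallel transport and absorbing the exponentially small center-bundle, node-displacement and $Df_k(z_k)$-versus-$Df(z_l)\circ A_{\beta_k}$ corrections, one gets $F_k^n(z_k)=F^n(z_l)\circ R_{\beta_k}$ up to $O(\theta^{\sigma k})$, hence $A_n(f_k,z_{l-1}^k,z_k)=R_{\beta_k}^{-1}\circ A_n(f,z_{l-1},z_l)+O(\theta^{\sigma k})$. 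Letting $n=n(k)$ and using the tail bounds gives $\|H^s_{z_{l-1}^k,z_k}(f_k)-R_{\beta_k}^{-1}\circ H^s_{z_{l-1},z_l}(f)\|\le C\lambda^k$ on the center fibre (after the identification); composing with the prefix estimate and using that $R_{\beta_k}^{-1}$ is an isometry and $\|H_\eta\|$ is bounded produces (\ref{rot}). In both computations the mismatch between the target fibres $E^c(z_l,f)$ and $E^c(z_k,f_k)$ contributes only $dist(z_l,z_k)\le C_2\tau^{\sigma_1 k}$ and the angle $\le C_4\rho^{\sigma k-1}$ to the $d$-distances, by Propositions~\ref{su} and~\ref{angulos}.

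The main difficulty is reconciling all the exponents. One must choose the free constant $\sigma=\sigma(\upsilon,\alpha,n_p,N)$ of Section~4.2 large --- and $c_0$ small --- so that the rates $\theta^{\sigma k}$, $\tau^{\sigma_1 k}$ (with $\sigma_1=\sigma\alpha^N$), and $\rho^{\sigma k}$ all dominate the fixed combinatorial blow-ups: $(1+CR_1^\alpha)^N$, $C_3^N$, the $N$-th power of the H\"older constant of the holonomies, and the factor $\widehat{C}^{\,n(k)}$ incurred when iterating backwards in order to keep the perturbed orbits and nodes inside the window --- while at the same time keeping $n(k)<\sigma k$, so that $c/(1+j^2)>\delta_k$ remains valid for $|j|\le n(k)$. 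Once these rates are balanced, setting $\lambda:=\max\{\theta^{\sigma},\tau^{\sigma_1},\rho^{\sigma}\}\in(0,1)$ and $K\in\mathbb{N}$ large enough absorbs every error term into $C\lambda^k$.
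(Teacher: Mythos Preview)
Your proposal is correct and follows essentially the same approach as the paper: apply Corollary~\ref{compo} leg by leg, bound each $\psi(H_{x_{i+1}})$ via the explicit formula underlying Proposition~\ref{continuity} with a window $n=n(k)$ growing linearly in $k$, feed in the exponential node and angle estimates of Propositions~\ref{su} and~\ref{angulos}, and observe that the slow-recurrence of Proposition~\ref{pivot} forces all derivative discrepancies $\|Df-Df_k\|$ along the relevant orbit segments to vanish except for the single factor at $z_l$ on the stable leg of $\zeta_1$, which produces exactly $R_{\beta_k}$. Your explicit choice $n(k)=\lfloor c_0\sigma k\rfloor$ and your peeling-off of the last leg of $\zeta_1$ are more detailed than the paper's presentation (which simply notes that the term $\|Df^{-1}(f(z_l))-Df_k^{-1}(f(z_l))\|$ is the only non-zero one and that precomposing with $R_{\beta_k}^{-1}$ cancels it), but the argument is the same.
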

\begin{proof}
Consider Equation (\ref{direc}). By Corollary \ref{compo}, it is enough to estimate the distances between the holonomies $H_{i+1}(f)$ and $H_{i+1}(f_k)$ for $i\in \{l,... ,N-1\}$, where $H_{i}=H^{**}_{w_{i+1},w_{i}}$ and $**\in \{s,u\}$. 

Fix $i\in \{l,...,N-1\}$ and suppose $z_{i+1}\in W^s(z_{i})$. The other case is analogous. 

By Proposition \ref{continuity}, we can estimate the distance between $H_{i+1}(f)$ and $H_{i+1}(f_k)$ by an expression depending on: $$\nu^k(z_i)^{\alpha(1-\varsigma)}, \quad \nu^k(z_i^k)^{\alpha(1-\varsigma)},$$ 
$$dist(z_i, z_i^k),\quad dist(z_{i+1}, z_{i+1}^k),$$  
$$\angle (E^c(z_i,f), E^c(z_i,f_k)), \quad \angle (E^c(z_{i+1},f), E^c(z_{i+1},f_k)),$$  
and the following terms: $$\left\|Df(f^j(z_{i-1}))-Df_k(f^j(z_{i-1}))\right\| \quad \text{and} \quad \left\|Df^{-1}(f^{j+1}(z_{i}))-Df_k^{-1}(f^{j+1}(z_{i}))\right\|,$$ for every $j=0,...,k$. 

Since $\nu<1$ and $\varsigma<1$, the first two terms are going exponentially fast to zero as $k\to \infty$. The estimations for the two terms in the second line are given by Proposition \ref{su} and for the third line by Proposition \ref{angulos}. By Equation (\ref{deltak}) and Lemma \ref{pertfinal} the last terms are all equal to zero because $f$ and $f_k$ coincide outside the support of the perturbation. 

Although we have exponential estimations for each term, we can only guarantee that the whole expression is going exponentially fast to zero as $k\to \infty$ due to the constant $\sigma$ in Equation (\ref{deltak}). 

If we want to estimate $d(H_{\zeta_1}(a), H_{\zeta_1^k}(a))$ everything works the same than above except that now we have the following term which is not zero, $$\left\|Df^{-1}(f(z_l))-Df_k^{-1}(f(z_l))\right\|.$$ Because of that we consider $R_{\beta_k}^{-1}\circ H_{\zeta_1}$ instead of $H_{\zeta_1}$ in Equation (\ref{rot}). This allow us to obtain the desired estimation. 

Notice that we are comparing the values of the derivative at the point $z_l$ and not in $z_k=z_l^k$. This is an important observation since we have no control on $Df_k(z_l^k)$. 
\end{proof}

\section {Accessibility}

We obtain a continuity property for $su$-paths under the variation of the diffeomorphism using the results and techniques in \cite{AV2}. In order to clarify the presentation we state here the results that we are going to need.

In this section, all the maps will be $C^1$ and we will always consider the $C^1$ topology. If $f$ is a partially hyperbolic diffeomorphism, we denote $u=dim\; E^u$, $s=dim\; E^s$. 

Recall that given two points $x,y\in M$, $x$ is \textit{accessible} from $y$ if there exist a path that connects $x$ to $y$, which is a concatenation of finitely many subpaths, each of which lies entirely in a single leaf of $W^u$ or a single leaf of $W^s$. This is a equivalence relation and we say that $f$ is \textit{accessible} if $M$ is the unique accessibility class. 

In the sequel, we state the principal results in \cite{AV2} which will allow us to prove the main results in this Section. The following Theorem was already mentioned in Section 2. 
\begin{teo}\label{avvi} If $f$ is a partially hyperbolic accessible diffeomorphism and the center bundle $E^c$ is 2-dimensional, then $f$ is stably accessible. 
\end{teo}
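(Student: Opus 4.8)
The plan is to deduce Theorem \ref{avvi} from the machinery of \cite{AV2}, where stable accessibility for systems with $2$-dimensional center is obtained by exhibiting, near every point, an $su$-path (of bounded combinatorial type) whose endpoint map is a local submersion onto a neighborhood. First I would recall the local picture: for a partially hyperbolic $f$ and a point $x$, an \emph{$su$-loop} based at $x$ with prescribed leg-type $(*_1,\dots,*_N)$, $*_i\in\{s,u\}$, and with leg-parameters $t=(t_1,\dots,t_N)$ small, defines a point $\Phi_f(x,t)$ obtained by successively flowing along local strong-(un)stable leaves. Accessibility of $f$ means that the global accessibility class of every point is all of $M$; since $M$ is connected, it suffices to show that the accessibility class is open, equivalently that each $\Phi_f(x,\cdot)$ can be chosen to be an open map near $t=0$ (a submersion) for a suitable finite leg-type. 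The content of \cite{AV2} in the $2$-dimensional center case is exactly that one can upgrade the pointwise reachability coming from accessibility to such a submersion statement, and moreover that the leg-type and the lower bound on the rank of $D_t\Phi_f$ are \emph{robust}: they persist for all $g$ in a $C^1$-neighborhood of $f$, because $\Phi_g(x,t)$ depends continuously (in the $C^1$ sense) on $(g,x,t)$ through the strong-stable and strong-unstable holonomies, which vary continuously with $g$ on compact parts of the foliations.

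The key steps, in order, are: (1) Fix $f$ accessible with $\dim E^c=2$. Using the results of \cite{AV2}, produce for each $x\in M$ a leg-type $\theta(x)$ of length $N(x)$ and a neighborhood $U_x$ such that the $su$-map $\Phi_f(y,\cdot)$ based at $y$ with type $\theta(x)$ has, for all $y\in U_x$, image covering a fixed-radius ball around $y$ — i.e. the accessibility class of $y$ contains a neighborhood of $y$ of definite size, witnessed by $su$-paths of uniformly bounded length and uniformly bounded total leg-parameter. Here is where the $2$-dimensionality of $E^c$ is essential: it is what makes the relevant ``accessibility cocycle''/bracket-type argument of \cite{AV2} produce a full-rank (hence surjective) differential rather than merely a nontrivial one. (2) By compactness of $M$, extract a finite subcover $U_{x_1},\dots,U_{x_m}$, so there is a single bound $N_0$ on leg-length, a single bound $L_0$ on leg-parameters, and a single radius $\rho_0>0$ such that for every $y\in M$ some $\Phi_f(y,\cdot)$ of length $\le N_0$ with parameters in $[-L_0,L_0]$ covers $B(y,\rho_0)$. (3) Appeal to the continuity of $su$-maps under perturbation — precisely the continuity of the stable and unstable holonomies on compact pieces of the foliations, which is the $C^1$-analogue of Proposition \ref{continuity}, together with $C^1$-continuity of the local leaves themselves — to conclude that for all $g$ in a $C^1$-neighborhood $\mathcal V(f)$ the maps $\Phi_g(y,\cdot)$ of the same types still cover $B(y,\rho_0/2)$, say, for every $y$. (4) Therefore for every $g\in\mathcal V(f)$ each accessibility class of $g$ is open, hence (by connectedness of $M$) there is a unique one, i.e. $g$ is accessible; thus $f$ is stably accessible.

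I expect the main obstacle to be step (1)–(3): one must phrase the output of \cite{AV2} in a way that is simultaneously \emph{quantitative} (uniform leg-length, uniform parameter bound, uniform covering radius over $x\in M$) and \emph{robust} (the same data work for all nearby $g$). The subtlety is that the natural object from accessibility is only a pointwise surjectivity of a differential, and one has to check that the rank is \emph{lower semicontinuous} in $(g,x)$ with a uniform lower bound — which is where the finite subcover and the uniform continuity of holonomies on compact foliation boxes do the work. The role of $\dim E^c=2$ is not just cosmetic: in higher-dimensional center the analogous construction of \cite{AV2} does not in general produce local submersions, and stable accessibility in that generality is genuinely harder (and known only under extra hypotheses). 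Since the paper treats this as a cited black box (Theorem \ref{avvi} = a result of \cite{AV2}), the ``proof'' here should consist of recalling the $su$-map formalism, stating the uniform-submersion output of \cite{AV2} in the $2$-center case, and then running the compactness-plus-perturbation-continuity argument above to pass from ``accessible'' to ``stably accessible.''
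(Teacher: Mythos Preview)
Your overall architecture --- produce, for each point, a robust and uniformly bounded $su$-mechanism that reaches a neighborhood, then use compactness and continuity of the strong foliations under $C^1$-perturbation --- is the right shape. But the specific mechanism you propose does not work as stated. You repeatedly invoke the differential $D_t\Phi_f$, its rank, and the word ``submersion''; however, the map $\Phi_f(x,\cdot)$ is in general \emph{not} differentiable in the leg-parameters. Each strong leaf is smooth, but the leaves vary only H\"older-continuously in the transverse direction, so as soon as you concatenate two legs the dependence of the second endpoint on the first parameter is merely H\"older. There is thus no differential whose rank you can bound from below, and no lower-semicontinuity-of-rank argument to run. This is exactly the obstacle that forces \cite{AV2} to proceed topologically rather than differential-geometrically.

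The argument actually outlined in the paper (via Theorems~\ref{pls}, \ref{den} and~\ref{int}) replaces your submersion step by a purely topological intersection step. One fixes a small $2$-disk $D$ transverse to $E^s\oplus E^u$ and two smooth curves $\eta_u,\eta_s\subset D$ intersecting transversely. Using accessibility and the density result (Theorem~\ref{den}), one finds a deformation path based on $(f,x)$ that $C^0$-approximates $\eta_u$ and a deformation path based on $(f,y)$ that $C^0$-approximates $\eta_s$; by Corollary~\ref{nuevo} these persist for any $g$ close to $f$ and any nearby base points. The Intersection Property (Theorem~\ref{int}) --- and this is precisely where $\dim E^c=2$ enters, as a codimension count making a $1$-dimensional-versus-$1$-dimensional topological crossing robust --- then guarantees a point on one path whose $g$-unstable leaf meets the $g$-stable leaf of a point on the other, yielding an $su$-path for $g$ from $x$ to $y$. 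So the role of the $2$-dimensional center is not to make a differential full rank, but to make a \emph{topological} transversality argument go through for merely continuous paths. If you rewrite your step~(1) using this intersection mechanism in place of the submersion claim, steps~(2)--(4) then proceed essentially as you describe.
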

 The next Theorem provides a parametrization for accessibility classes. 
\begin{teo}\label{pls}  For every partially hyperbolic diffeomorphism $f:M\longrightarrow M$, there exist $l\geq 1$, a neighborhood of $f$, $\mathcal{V}(f)$, and a sequence $P_m: \mathcal{V}(f)\times M \times \mathbb{R}^{l(u+s)m}\longrightarrow M$ of continuous maps such that, for every $(g,z,v)\in \mathcal{V}(f)\times M \times \mathbb{R}^{l(u+s)m}$,
\begin{enumerate}[label=\emph{(\alph*)}]
\item $P_n(g,P_m(g,z,v),w)=P_{n+m}(g,z, (v,w))$ for every $w\in \mathbb{R}^{l(u+s)n}$;
\item $\xi\mapsto P_m(g,\xi,v)$ is a homeomorphism from $M$ to $M$ and $P_m(g, * , 0)=id$;
\item $\bigcup_{m\geq 0} P_m( \{(g,z)\}\times \mathbb{R}^{l(u+s)m})$ is the $g$-accessibility class of $z$.
\end{enumerate}
\end{teo}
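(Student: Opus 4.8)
The plan is to construct the maps $P_m$ concretely by iterating the elementary operation of "flowing along an unstable leaf and then along a stable leaf", with the parameter $v$ recording the jumps. First I would recall that for a partially hyperbolic diffeomorphism there is a neighborhood $\mathcal{V}(f)$ and a radius $R_1>0$ such that, for every $g\in\mathcal{V}(f)$ and every $z\in M$, the ball $B(z,R_1)$ is contained in a foliation box for both $W^u(g)$ and $W^s(g)$; moreover the local leaves $W^u_{loc}(z,g)$ and $W^s_{loc}(z,g)$ are (uniformly) embedded disks of dimensions $u$ and $s$ that depend continuously on $(g,z)$ in the $C^1$ topology (this is the standard continuous dependence of strong foliations, \cite{HPS}). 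Fix once and for all, for each $g$ and $z$, a $C^0$ parametrization $\Phi^u_{g,z}:\mathbb{R}^u\to W^u_{loc}(z,g)$ and $\Phi^s_{g,z}:\mathbb{R}^s\to W^s_{loc}(z,g)$ with $\Phi^{u}_{g,z}(0)=\Phi^s_{g,z}(0)=z$, chosen to vary continuously with $(g,z)$ (for instance by projecting a fixed chart along the leaves). Then define the \emph{one-step holonomy move}
\begin{equation*}
Q(g,z,(v^u,v^s)) = \Phi^s_{g,\,y}\bigl(v^s\bigr),\qquad y=\Phi^u_{g,z}\bigl(v^u\bigr),
\end{equation*}
for $(v^u,v^s)\in\mathbb{R}^u\times\mathbb{R}^s=\mathbb{R}^{u+s}$. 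This $Q$ is continuous in all variables, satisfies $Q(g,\cdot,0)=\mathrm{id}$, and, for fixed $(g,v)$, is a homeomorphism of $M$ onto $M$ with inverse given by the reversed moves (go back along $W^s$, then back along $W^u$); the inverse is again continuous in $(g,v)$ because the local-leaf parametrizations are.

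Next I would build $P_m$ by concatenating $l\cdot m$ such moves, where $l$ is chosen large enough that the moves can reach anywhere in an accessibility class: set $l(u+s)$ to be the length of one "block" and put, for $v=(v_1,\dots,v_{lm})\in(\mathbb{R}^{u+s})^{lm}$,
\begin{equation*}
P_m(g,z,v) = Q(g,\,\cdot\,,v_{lm})\circ\cdots\circ Q(g,\,\cdot\,,v_1)\,(z).
\end{equation*}
Property (a), the cocycle/concatenation identity $P_n(g,P_m(g,z,v),w)=P_{n+m}(g,z,(v,w))$, is then immediate from the definition. Property (b): $P_m(g,\cdot,0)=\mathrm{id}$ because each $Q(g,\cdot,0)=\mathrm{id}$; and $\xi\mapsto P_m(g,\xi,v)$ is a homeomorphism of $M$ since it is a finite composition of the homeomorphisms $\xi\mapsto Q(g,\xi,v_j)$ — here one must be slightly careful that $Q(g,\cdot,v_j)$ is globally defined, which it is once we let the $\Phi^{u/s}_{g,z}$ be defined on all of $\mathbb{R}^{u/s}$ by following the (globally immersed) leaves, with $R_1$ only used to guarantee local injectivity; the composition being a homeomorphism follows. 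Continuity of $P_m$ in $(g,z,v)$ is just continuity of a finite composition of continuous maps. Property (c), that $\bigcup_{m\ge0}P_m(\{(g,z)\}\times\mathbb{R}^{l(u+s)m})$ equals the $g$-accessibility class of $z$: the inclusion $\subseteq$ is clear since every $Q$-move stays inside one leaf of $W^u(g)$ followed by one leaf of $W^s(g)$; for $\supseteq$, any $su$-path from $z$ can be subdivided into short legs of length $<R_1$, each leg lying in a local leaf and hence realized by a single $\Phi^u$- or $\Phi^s$-move (a pure $W^u$-leg is a $Q$-move with $v^s=0$, a pure $W^s$-leg one with $v^u=0$), and by inserting trivial moves we can pad the number of moves to a multiple of $l$.

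The main obstacle I anticipate is \emph{not} the algebra of the concatenation identity but the \textbf{continuity and global well-definedness of the leaf parametrizations} $\Phi^{u}_{g,z},\Phi^s_{g,z}$ jointly in $(g,z)$, together with the passage from local to global: the strong leaves are only immersed, not embedded, so one cannot naively parametrize an entire leaf by $\mathbb{R}^u$; the fix is to parametrize by "iterated local charts" — i.e. to absorb the globality into the large number $lm$ of allowed short local moves — which is exactly why the statement has the apparently redundant factor $l$ and why $m$ must be allowed to grow. Keeping the maps $C^0$ (not $C^1$) in the fiber variable $v$ is what makes this manageable; asking for differentiability in $v$ would fail because the strong foliations are typically not differentiable transversally. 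I would therefore devote the bulk of the write-up to (i) fixing the continuous local parametrizations via a partition-of-unity/chart construction as in \cite{HPS}, uniform over $\mathcal{V}(f)$, and (ii) the combinatorial lemma that any $su$-path can be rewritten as a bounded-length-leg $su$-path with the number of legs a multiple of $l$, which is where the choice of $l$ (depending only on $f$, via compactness of $M$ and the uniform box radius $R_1$) is pinned down.
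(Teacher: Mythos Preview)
The paper does not give a proof of Theorem \ref{pls}; it is quoted from \cite{AV2} as one of several inputs for Section~5, so there is no argument in the present paper to compare against. Your construction is the natural one and is essentially how such parametrization results are proved. Two points are softer than your write-up suggests.

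First, the role of $l$ is not what you say. Padding by trivial moves makes the number of legs a multiple of $l$ for \emph{any} $l$, so this cannot pin $l$ down, and ``reaching anywhere'' is the job of $m$, not $l$. In the original construction the factor $l$ absorbs the obstruction to choosing \emph{globally continuous} identifications $\mathbb{R}^u\cong E^u(z,g)$, $\mathbb{R}^s\cong E^s(z,g)$: if these bundles are not trivial you cannot define a single continuous $\Phi^{u/s}_{g,z}$ over all of $M$, and one compensates by allowing several local charts per step (equivalently, working on a finite cover), which is what produces $l$. Your partition-of-unity remark points in the right direction, but tie $l$ to this and not to reachability.

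Second, the claim that $\xi\mapsto Q(g,\xi,v)$ is a homeomorphism of $M$ for all $v$ needs more than ``the inverse is the reversed move'': the inverse of $z\mapsto \Phi^u_{g,z}(v^u)$ is not $w\mapsto \Phi^u_{g,w}(-v^u)$ unless the parametrizations are translation-coherent along each leaf, and extending $\Phi^{u}_{g,z}$ to the whole immersed leaf, as you suggest at one point, makes injectivity genuinely unclear. The clean fix is the one you also mention: keep the image of $\Phi^{u/s}_{g,z}$ inside the \emph{local} leaf (precompose with a homeomorphism $\mathbb{R}^{u}\to$ a ball of radius $<R_1$), so each single move stays in a foliation box; then bijectivity follows because distinct leaves are disjoint and, within a leaf, the move is conjugate to a translation in the local chart. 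Globality is then recovered through $m$, exactly as you say. With these two adjustments your sketch is correct.
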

Using this Theorem, Avila and Viana introduce a class of paths, called \textit{deformation paths}, contained in accessibility classes and having a useful property of persistence under the variation of the diffeomorphism and the base point. More precisely,
\begin{defn} A deformation path based on $(f,z)$ is a map $\gamma:[0,1]\longrightarrow M$ such that there exist $m\geq 1$ and a continuous map $\Gamma\mapsto \mathbb{R}^{l(u+s)m}$ satisfying $\gamma(t)=P_m(f,z, \Gamma(t))$. 
\end{defn}
The continuity of the sequence of maps $P_m$ given by Theorem \ref{pls} implies the following Corollary.
\begin{cor}\label{nuevo} If $\gamma$ is a deformation path based on $(f,z)$, then for every $g$ close to $f$ and any $w$ close to $z$, there exists a deformation path based on $(g,w)$ that is uniformly close to $\gamma$.
\end{cor}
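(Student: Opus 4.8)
The plan is to deduce Corollary~\ref{nuevo} directly from the continuity of the maps $P_m$ provided by Theorem~\ref{pls}. Let $\gamma$ be a deformation path based on $(f,z)$, so that by definition there are an integer $m\geq 1$ and a continuous map $\Gamma:[0,1]\longrightarrow \mathbb{R}^{l(u+s)m}$ with $\gamma(t)=P_m(f,z,\Gamma(t))$ for all $t\in[0,1]$. The idea is to keep the \emph{same} parametrizing data $m$ and $\Gamma$ and simply move the first two arguments of $P_m$ from $(f,z)$ to $(g,w)$. For $g$ in the neighborhood $\mathcal{V}(f)$ of Theorem~\ref{pls} and any $w\in M$, define
\begin{equation*}
\widetilde{\gamma}(t)=P_m(g,w,\Gamma(t)).
\end{equation*}
By part (b) of Theorem~\ref{pls} (or rather by the definition of deformation path, which requires only a continuous $\Gamma$ valued in the relevant Euclidean space), $\widetilde{\gamma}$ is a deformation path based on $(g,w)$, since $\Gamma$ is continuous and $w$ is arbitrary in $M$. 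So the existence half of the statement is immediate once $g\in\mathcal{V}(f)$.

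The substantive point is the uniform closeness, and here is where I would use continuity together with compactness. The image $\Gamma([0,1])$ is a compact subset $K\subset\mathbb{R}^{l(u+s)m}$ because $[0,1]$ is compact and $\Gamma$ is continuous. The map $P_m:\mathcal{V}(f)\times M\times\mathbb{R}^{l(u+s)m}\longrightarrow M$ is continuous by Theorem~\ref{pls}, hence uniformly continuous on the compact set $\overline{\mathcal{W}}\times M\times K$ for any relatively compact neighborhood $\mathcal{W}$ of $f$ with $\overline{\mathcal{W}}\subset\mathcal{V}(f)$ (here we use that $M$ is compact). Therefore, given $\varepsilon>0$, there is a neighborhood of $(f,z)$ in $\mathcal{V}(f)\times M$ such that for every $(g,w)$ in it and every $v\in K$,
\begin{equation*}
\mathrm{dist}\big(P_m(g,w,v),\,P_m(f,z,v)\big)<\varepsilon.
\end{equation*}
Applying this with $v=\Gamma(t)$ and taking the supremum over $t\in[0,1]$ yields $\sup_{t}\mathrm{dist}(\widetilde{\gamma}(t),\gamma(t))<\varepsilon$, which is exactly the asserted uniform closeness of $\widetilde{\gamma}$ to $\gamma$.

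The only mild subtlety — and the step I would be most careful about — is the compactness/uniformity argument: $P_m$ is merely continuous, not uniformly continuous a priori on its full (non-compact) domain, so one must restrict to a product of compact sets before invoking uniform continuity. This is harmless because $M$ is compact, $K=\Gamma([0,1])$ is compact, and we may shrink to a precompact neighborhood of $f$; no quantitative estimate on $P_m$ is needed beyond plain continuity. There is no real obstacle here, and it is worth remarking that this is the reason deformation paths were introduced in \cite{AV2}: the parametrization $\Gamma$ is intrinsic to the path and does not depend on the diffeomorphism, so transporting it across nearby $(g,w)$ is automatic, and the persistence under perturbations of both the diffeomorphism and the base point follows with no extra work.
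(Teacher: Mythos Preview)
Your approach is exactly the paper's: the corollary is stated there as an immediate consequence of the continuity of the maps $P_m$, and your construction $\widetilde{\gamma}(t)=P_m(g,w,\Gamma(t))$ with the same $\Gamma$ is the intended one.

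There is, however, a small technical slip in the uniformity step. You appeal to uniform continuity of $P_m$ on $\overline{\mathcal{W}}\times M\times K$, but the space of diffeomorphisms (with the $C^1$ topology) is not locally compact, so $\overline{\mathcal{W}}$ need not be compact and uniform continuity is not available on that product. Fortunately, compactness in the $g$-variable is unnecessary: only compactness of $[0,1]$ is needed. The map
\[
(g,w,t)\longmapsto \mathrm{dist}\big(P_m(g,w,\Gamma(t)),\,P_m(f,z,\Gamma(t))\big)
\]
is continuous and vanishes on $\{(f,z)\}\times[0,1]$; for each $t_0$ there is a product neighborhood $U_{t_0}\times V_{t_0}\times I_{t_0}$ of $(f,z,t_0)$ on which it is below $\varepsilon$, and a finite subcover of $[0,1]$ by the intervals $I_{t_0}$ yields a common neighborhood $U\times V$ of $(f,z)$ that works for all $t$. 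With this adjustment your argument is complete.
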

The main technical step in the proof of Theorem \ref{avvi} is a result of approximation of general paths in accessibility classes by the deformation paths defined above. We state a simpler version of this result that is sufficient for our purposes.
\begin{teo}\label{den} If $f$ is accessible, then for every $z\in M$ the set of deformation paths based on $(f,z)$ is dense on $C^0([0,1], M)$. 
\end{teo}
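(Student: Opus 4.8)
The plan is to reduce the statement to a \emph{local} assertion about the maps $P_{m}$ and then assemble the approximating path by concatenation, using the cocycle identity (a) of Theorem~\ref{pls}. Fix $\alpha\in C^{0}([0,1],M)$ and $\epsilon>0$. Since $f$ is accessible the accessibility class of $z$ is all of $M$, so by Theorem~\ref{pls}(c) there are $m_{0}\ge1$ and a parameter $v_{0}$ with $P_{m_{0}}(f,z,v_{0})=\alpha(0)$; this \emph{fixed} parameter will serve as an anchor letting a deformation path based on $(f,z)$ actually \emph{start} at $\alpha(0)$ --- recall that the initial value $\Gamma(0)$ of a deformation path is unconstrained. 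It then remains to move along $\alpha$ in small controlled steps.

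The local assertion I would isolate is a form of \emph{uniform local accessibility}: for every $\delta>0$ there is $R>0$ such that for each $x\in M$ there is an integer $m_{x}\ge1$ with the property that whenever $d(x,y)<R$ one can find a continuous $\Delta\colon[0,1]\to\mathbb{R}^{l(u+s)m_{x}}$ with $\Delta(0)=0$, $P_{m_{x}}(f,x,\Delta(1))=y$ and $\operatorname{diam}\{\,P_{m_{x}}(f,x,\Delta(s)):s\in[0,1]\,\}<\delta$. Granting this, I would pick a partition $0=t_{0}<\dots<t_{n}=1$ with $\operatorname{diam}\alpha([t_{j-1},t_{j}])<\min(\epsilon/4,R)$, set $p_{j}=\alpha(t_{j})$, and apply the assertion with $\delta=\epsilon/4$ to each pair $(p_{j-1},p_{j})$ to obtain parameter paths $\Delta_{j}$. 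Identity (a) of Theorem~\ref{pls}, applied repeatedly, then shows that the blockwise concatenation
$$\Gamma(t)=\bigl(v_{0},\,\Delta_{1}(1),\,\dots,\,\Delta_{j}(1),\,\Delta_{j+1}\!\bigl(\tfrac{t-t_{j}}{t_{j+1}-t_{j}}\bigr),\,0,\dots,0\bigr),\qquad t\in[t_{j},t_{j+1}],$$
is a well-defined continuous map into $\mathbb{R}^{l(u+s)(m_{0}+\dots+m_{n})}$ (continuity at each node uses $\Delta_{j+1}(0)=0$, and adjoining zero blocks leaves $P$ unchanged since $P_{m}(f,\cdot,0)=\mathrm{id}$), so $\gamma(t):=P_{m_{0}+\dots+m_{n}}(f,z,\Gamma(t))$ is a deformation path based on $(f,z)$. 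An induction on $j$ using (a) and the endpoint condition $P_{m_{x}}(f,x,\Delta(1))=y$ identifies $\gamma|_{[t_{j},t_{j+1}]}$ with a reparametrization of $s\mapsto P_{m_{j+1}}(f,p_{j},\Delta_{j+1}(s))$; hence $\gamma(0)=\alpha(0)$ and $d(\gamma(t),p_{j})<\epsilon/4$ on $[t_{j},t_{j+1}]$, so $d(\gamma(t),\alpha(t))<\epsilon/2$ for all $t$, which gives the density.

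The real content --- and the main obstacle --- is the uniform local accessibility statement. This is exactly the type of result developed in \cite{AV2} (and it underlies Theorem~\ref{avvi}), so in the write-up I would either quote it from there or reprove it in that spirit. The mechanism is an engulfing bootstrap: global accessibility gives that $\bigcup_{m}P_{m}(f,x,\mathbb{R}^{l(u+s)m})=M$ for each $x$, and one must upgrade this to the existence of a \emph{finite} $m$ for which $P_{m}(f,x,\cdot)$ already carries every small ball around $0$ onto a neighborhood of $x$, with a modulus that is uniform in $x$ --- here compactness of $M$ and the persistence statements of \cite{AV2} (such as Corollary~\ref{nuevo}) enter. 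The diameter control is then automatic: taking $\Delta(s)=s\,v$ for a small-norm $v$ with $P_{m}(f,x,v)=y$, the whole segment $\{sv\}$ is small and $v\mapsto P_{m}(f,x,v)$ is continuous with value $x$ at $0$. The difficulty in the engulfing step is that it must convert a qualitative global property into a quantitative, uniform and \emph{continuous} local one, with no recourse to an implicit-function argument since the $P_{m}$ are only continuous in the parameter; one is forced to argue with open mappings and iterated $W^{s}$ and $W^{u}$ holonomies. Everything else above --- the discretization, the concatenation, and the zero-padding bookkeeping --- is routine.
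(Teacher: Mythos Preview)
The paper does not prove Theorem~\ref{den}: it is stated without proof as a result imported from \cite{AV2} (Avila--Viana), introduced there as ``the main technical step in the proof of Theorem~\ref{avvi}.'' So there is no argument in the paper to compare your proposal against.

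That said, your reduction is sound and is in the spirit of how such statements are established. The concatenation scheme is correct: the cocycle identity (a) and the normalization $P_{m}(f,\cdot,0)=\mathrm{id}$ from Theorem~\ref{pls} do exactly the bookkeeping you claim, and the zero-padding makes $\Gamma$ genuinely continuous. What you have done is isolate the non-trivial content as a single lemma (your ``uniform local accessibility'') and then reduce Theorem~\ref{den} to it by an elementary discretize-and-glue argument. Since that lemma is precisely the substance of the relevant part of \cite{AV2}, your write-up and the paper end up invoking the same external input; you have simply unpacked the reduction layer that the paper leaves implicit by quoting the full theorem. One minor remark: your statement allows $m_{x}$ to depend on $x$, which is fine for the finite partition you use, but if you intend to reprove the lemma rather than quote it you will in practice need a uniform $m$ (obtained by compactness and the persistence in Corollary~\ref{nuevo}), and the openness argument you allude to is indeed the delicate point, since the $P_{m}$ are only continuous.
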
 
The final ingredient is what it is called Intersection Property, and it is in this result where the hypothesis of $dim\; E^c=2$ is necessary.
\begin{teo} [Intersection Property] \label{int} Let $f$ be a partially hyperbolic diffeomorphism with 2-dimensional center bundle. Let $D$ be a 2-dimensional disk transverse to $E^s\oplus E^u$ and $\eta_u$, $\eta_s$ be smooth paths in $D$ intersecting transversely at some point. Then, for every diffeomorphism $g$ $C^1$-close to $f$ and any continuous paths $\gamma_u$, $\gamma_s$ uniformly close to $\eta_u$, $\eta_s$, there are points $x_u$, $x_s$ in the images of $\gamma_u$, $\gamma_s$ such that $W^u(x_u,g)$ intersects $W^s(x_s,g)$. 
\end{teo}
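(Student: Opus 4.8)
The plan is to reduce the statement to a codimension-two intersection problem inside a fixed $2$-dimensional disk and solve that problem by a Brouwer-degree argument; this is exactly the step where the hypothesis $\dim E^c=2$ is indispensable. Since it suffices to make \emph{local} leaves meet, I would first discard everything away from the transverse intersection point $p_0$ of $\eta_u$ and $\eta_s$: replacing $\eta_u,\eta_s$ by short subarcs around $p_0$ (and $\gamma_u,\gamma_s$ by the corresponding subarcs) we may assume that $\eta_u$ and $\eta_s$ meet exactly once, transversely, inside a small ball $B\ni p_0$. On $B$ I would fix $C^1$ coordinates $\Theta\colon B\to\mathbb{R}^s\times\mathbb{R}^2\times\mathbb{R}^u$ with $\Theta(p_0)=0$, $\Theta(B\cap D)\subset\{0\}\times\mathbb{R}^2\times\{0\}$, and with $E^s(p_0),E^c(p_0),E^u(p_0)$ sent to the three coordinate subspaces; this is possible precisely because $D$ is transverse to $E^s\oplus E^u$ and $\dim D=\dim E^c=2$. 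After shrinking $B$ and passing to a $C^1$-neighbourhood of $f$, for every $g$ in this neighbourhood and every $x$ near $p_0$ the local leaf $W^u_{\mathrm{loc}}(x,g)$ is the graph of a $C^1$ map $\phi^u_{x}\colon\mathbb{R}^u\to\mathbb{R}^s\times\mathbb{R}^2$ with uniformly small slope, and $W^s_{\mathrm{loc}}(x,g)$ is the graph of $\psi^s_{x}\colon\mathbb{R}^s\to\mathbb{R}^2\times\mathbb{R}^u$ with uniformly small slope; moreover $x\mapsto\phi^u_x$, $x\mapsto\psi^s_x$ vary continuously, and they vary continuously with $g$ in the $C^1$ topology (continuity of the strong foliations, see \cite{HPS}).

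\textbf{The obstruction map.} A point lying on both $W^u_{\mathrm{loc}}(x_u,g)$ and $W^s_{\mathrm{loc}}(x_s,g)$ is a triple $(\sigma,\delta,\mu)$ with $(\sigma,\delta)=\phi^u_{x_u}(\mu)$ and $(\delta,\mu)=\psi^s_{x_s}(\sigma)$. Because the slopes are uniformly small, the equation for $\sigma$ alone, namely $\sigma=\pi_{\mathbb{R}^s}\phi^u_{x_u}\!\bigl(\pi_{\mathbb{R}^u}\psi^s_{x_s}(\sigma)\bigr)$, is a uniform contraction, hence has a unique solution $\sigma^\ast=\sigma^\ast(x_u,x_s)$, which then determines $\mu^\ast$; and the two local leaves meet inside $B$ if and only if
\[
\Phi(x_u,x_s)\ :=\ \pi_{\mathbb{R}^2}\phi^u_{x_u}(\mu^\ast)-\pi_{\mathbb{R}^2}\psi^s_{x_s}(\sigma^\ast)\ =\ 0 .
\]
Thus $\Phi\colon\gamma_u\times\gamma_s\cong[0,1]^2\to\mathbb{R}^2$ is a continuous map, depending continuously on $(g,\gamma_u,\gamma_s)$, whose zeros are exactly the pairs we are looking for. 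A direct computation shows that for $g=f$ and $x_u,x_s\in D$ one gets $\sigma^\ast=0$, $\mu^\ast=0$, so that $\Phi_0(x_u,x_s)=\delta(x_u)-\delta(x_s)$ is simply the difference of the two arcs read off in the $\mathbb{R}^2$-coordinate on $D$.

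\textbf{Degree argument.} Since $\eta_u,\eta_s$ are smooth and cross transversely exactly once, $\Phi_0$ does not vanish on $\partial[0,1]^2$ and has a single nondegenerate interior zero, so its Brouwer degree at $0$ equals $\pm1$. For $g$ $C^1$-close to $f$ and $\gamma_u,\gamma_s$ $C^0$-close to $\eta_u,\eta_s$, the map $\Phi$ is uniformly close to $\Phi_0$; in particular $\Phi$ is still nonzero on $\partial[0,1]^2$, and the straight-line homotopy through nonvanishing boundary maps gives $\deg(\Phi,[0,1]^2,0)=\deg(\Phi_0,[0,1]^2,0)=\pm1\neq0$. Hence $\Phi$ vanishes at some interior point $(t^\ast,s^\ast)$, i.e. $W^u_{\mathrm{loc}}(\gamma_u(t^\ast),g)$ meets $W^s_{\mathrm{loc}}(\gamma_s(s^\ast),g)$; taking $x_u=\gamma_u(t^\ast)$ and $x_s=\gamma_s(s^\ast)$ proves the theorem.

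\textbf{Main difficulty.} The conceptual content is short — identify the intersection condition as a codimension-two equation and invoke degree theory — but the work lies in constructing $\Phi$ correctly: checking that ``the local leaves meet'' is faithfully and continuously encoded by an $\mathbb{R}^2$-valued function of $(g,x_u,x_s)$, that any intersection of the local leaves genuinely stays inside the coordinate ball $B$ (a size/localization bookkeeping), and that the model map $\Phi_0$ has nonzero degree. This last point is exactly where transversality of $\eta_u,\eta_s$ enters, together with $\dim E^c=2$: for $\dim E^c\ge 3$ the intersection condition has codimension $\ge 3$ while the parameter space is still only $2$-dimensional, so no degree obstruction is available and the method breaks down.
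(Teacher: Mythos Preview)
The paper does not actually prove this theorem: it is quoted from \cite{AV2} (Avila--Viana) as one of the external inputs used in Section~5, with the remark that ``it is in this result where the hypothesis of $\dim E^c=2$ is necessary.'' So there is no in-paper proof to compare against.

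That said, your argument is correct and is the expected one. Encoding ``$W^u_{\mathrm{loc}}(x_u,g)\cap W^s_{\mathrm{loc}}(x_s,g)\neq\emptyset$'' as the vanishing of a continuous $\mathbb{R}^2$-valued map on $[0,1]^2$ and then invoking stability of Brouwer degree is precisely how such an intersection property is established, and your identification of the model map $\Phi_0(t,s)=\delta(\eta_u(t))-\delta(\eta_s(s))$ together with transversality of $\eta_u,\eta_s$ giving $\deg(\Phi_0,[0,1]^2,0)=\pm1$ is right. The contraction solving for $(\sigma^\ast,\mu^\ast)$ is standard once the slopes of the graph representations are uniformly small. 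Two small points you should make explicit in a clean write-up: (i) after passing to subarcs, arrange that the unique crossing occurs at \emph{interior} parameter values for both arcs, so that $\Phi_0$ is genuinely nonzero on $\partial[0,1]^2$; (ii) the paths $\gamma_u,\gamma_s$ are only $C^0$-close to $\eta_u,\eta_s$ and need not lie in $D$, so you are using the graph maps $\phi^u_x,\psi^s_x$ for \emph{all} $x$ near $p_0$, and the joint continuity in $(g,x)$ from \cite{HPS} is what makes $\Phi$ depend continuously on $(g,\gamma_u,\gamma_s)$. Your closing remark on why the method fails for $\dim E^c\ge3$ matches the paper's comment on the role of the hypothesis.
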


Let $f$ be a partially hyperbolic accessible diffeomorphism with 2-dimensional center bundle and $x,y\in M$. By Theorem \ref{avvi}, if $g$ is $C^1$-close enough to $f$, then there exists some $su$-path for $g$ joining $x$ to $y$. Besides, the proof of the Theorem in \cite{AV2} uses Theorem \ref{den} and \ref{int} and provides a way to find the $su$-path for $g$. The following results uses that information to prove relations between the $su$-paths for $f$ and for a sequence $f_k\rightarrow f$.
\begin{prop} \label{caminocont}
Let $f$ be a partially hyperbolic accessible diffeomorphism with 2-dimensional center bundle. For every $x,y\in M$, $y_k\rightarrow y$ and every sequence $f_k\rightarrow f$ in the $C^1$ topology, there exist a subsequence $k_j$, su-paths for $f_{k_j}$ denoted by $\zeta_{k_j}$ and a su-path for $f$ denoted by $\zeta$ satisfying the following: 
\begin{enumerate}[label=\emph{(\alph*)}] 
\item $\zeta_{k_j}=[z_0^j, ... ,z_N^j]$ joins $x$ to $y_{k_j}$,
\item $\zeta=[z_0, ..., z_N]$ joins $x$ to $y$ and 
\item for every $\epsilon>0$ there exists $K\in \mathbb{N}$ such that for every $k_j\geq K$, $$dist (z_i, z_i^j)< \epsilon$$ for every $i\in \{0,...,N\}$.
\end{enumerate}
\end{prop}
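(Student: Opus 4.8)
The plan is to re-examine the proof of Theorem \ref{avvi} in \cite{AV2} and read off from it a construction of the joining $su$-paths that is uniform in the diffeomorphism and in the target point, and then extract a convergent subsequence by compactness. That proof, via Theorems \ref{den} and \ref{int}, produces --- for the accessible $f$ and the points $x,y$ --- an integer $m\geq 1$, a compact $2$-disk $D$ transverse to $E^{s}\oplus E^{u}$, continuous maps $\Gamma_{u},\Gamma_{s}\colon[0,1]\to\mathbb{R}^{l(u+s)m}$ whose associated deformation-path families $t\mapsto P_{m}(f,x,\Gamma_{u}(t))$ and $t\mapsto P_{m}(f,y,\Gamma_{s}(t))$ sweep smooth arcs $\eta_{u},\eta_{s}\subset D$ crossing transversely, and a $C^{1}$-neighborhood of $f$ on which Theorem \ref{int} applies to any paths uniformly close to $\eta_{u},\eta_{s}$. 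For $g$ in that neighborhood and $w$ near $y$, the same $\Gamma_{u},\Gamma_{s}$ still define deformation paths based at $(g,x)$ and $(g,w)$ that are uniformly close to $\eta_{u},\eta_{s}$ (this is exactly what the proof of Corollary \ref{nuevo} gives, by continuity of $P_{m}$), so Theorem \ref{int} hands us parameters $t_{u},t_{s}\in[0,1]$ and a point $q$ in $W^{u}_{loc}(P_{m}(g,x,\Gamma_{u}(t_{u})),g)\cap W^{s}_{loc}(P_{m}(g,w,\Gamma_{s}(t_{s})),g)$. Reading $P_{m}$ as an iterated composition of the elementary map $P_{1}$ (property (a) of Theorem \ref{pls}), the point $P_{m}(g,x,\Gamma_{u}(t_{u}))$ is reached from $x$ by an $su$-path which is a composition of finitely many single-leaf slides, all of whose intermediate nodes are continuous functions of $(g,x,t_{u})$ --- the partial iterates $P_{j}(g,x,\cdot)$, $j\leq m$, together with the leg-endpoints internal to each elementary block. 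Concatenating this with the unstable leg to $q$, the stable leg from $q$, and the reversed $su$-path realizing $P_{m}(g,w,\Gamma_{s}(t_{s}))$, I obtain an $su$-path from $x$ to $w$ for $g$ whose number of legs $N$ does not depend on $g$ or $w$. Since $\Gamma_{u}([0,1])$ and $\Gamma_{s}([0,1])$ are compact, all legs on the two deformation-path portions have uniformly bounded length; I will also need the same bound on the two middle legs, which is why I would quote the precise form of Theorem \ref{int} from \cite{AV2} (it realizes $q$ inside uniformly bounded strong leaves of the two endpoints).

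I would then apply this with $g=f_{k}$ and $w=y_{k}$, which is legitimate for all large $k$ since $f_{k}\to f$ in $C^{1}$ and $y_{k}\to y$, obtaining $su$-paths $\zeta_{k}=[z_{0}^{k},\dots,z_{N}^{k}]$ for $f_{k}$ with $z_{0}^{k}=x$ and $z_{N}^{k}=y_{k}$, along with auxiliary data $t_{u}^{k},t_{s}^{k}\in[0,1]$ and the middle node $q^{k}$. By compactness of $[0,1]$ and of $M$, pass to a subsequence $k_{j}$ along which $t_{u}^{k_{j}}\to t_{u}^{\infty}$, $t_{s}^{k_{j}}\to t_{s}^{\infty}$ and $q^{k_{j}}\to q^{\infty}$. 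Continuity of all the parametrizations above (Theorem \ref{pls}), together with $f_{k_{j}}\to f$, $y_{k_{j}}\to y$ and continuity of $\Gamma_{u},\Gamma_{s}$, then forces every node of $\zeta_{k_{j}}$ other than the middle one to converge --- those on the $x$-side to the corresponding points $P_{j}(f,x,\cdot)$, those on the $y$-side to the corresponding $P_{j}(f,y,\cdot)$ --- and I name the limits $z_{i}$. Continuous dependence of the strong foliations on the diffeomorphism in the $C^{1}$ topology (\cite{HPS}) gives that the two endpoints $P_{m}(f_{k_{j}},x,\Gamma_{u}(t_{u}^{k_{j}}))$ and $P_{m}(f_{k_{j}},y_{k_{j}},\Gamma_{s}(t_{s}^{k_{j}}))$ converge and that $q^{\infty}$ lies in the corresponding local strong-unstable and strong-stable leaves for $f$; I take $q^{\infty}$ as the limiting middle node. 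The same continuity, used leg by leg with the uniform length bound, shows that each pair of consecutive limit nodes $z_{i-1},z_{i}$ lies in a common local leaf of $W^{u}(f)$ or $W^{s}(f)$, of the same stable/unstable type as the corresponding leg of $\zeta_{k_{j}}$. Hence $\zeta=[z_{0},\dots,z_{N}]$ is a genuine $su$-path for $f$ joining $x=z_{0}$ to $y=z_{N}$, and statement (c) is just the convergence $z_{i}^{k_{j}}\to z_{i}$ over the finite index set $\{0,\dots,N\}$.

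The main obstacle is the first step: one must revisit the (rather involved) proof of stable accessibility in \cite{AV2} and verify that it genuinely delivers a joining $su$-path with a combinatorial length independent of the perturbation and of the endpoint, with its non-middle nodes expressed through the continuous parametrizations $P_{j}$, and with the intersection from the Intersection Property realized inside uniformly bounded strong leaves (so that the ``limit of a leg is a leg'' step is legitimate under $C^{1}$-continuity of the invariant foliations). Everything afterwards --- specializing to $(f_{k},y_{k})$, extracting a subsequence, and passing to limits node by node via continuity of $P_{j}$ and of the strong foliations --- is a routine compactness argument, and the single subsequence extraction is forced only by the non-canonical choice of the parameters $t_{u},t_{s}$ and of the local intersection point $q$.
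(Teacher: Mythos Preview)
Your proposal is correct and follows essentially the same route as the paper's own proof: fix the disk $D$ and arcs $\eta_u,\eta_s$, use Theorem~\ref{den} to get deformation paths $\gamma_u,\gamma_s$ based at $(f,x)$ and $(f,y)$, invoke Corollary~\ref{nuevo} to transport them to $(f_k,x)$ and $(f_k,y_k)$, apply Theorem~\ref{int} to obtain the intersection point $w_k$ and parameters $t_u^k,t_s^k$, extract a convergent subsequence of the parameters, and pass to the limit using continuity of $P_m$ and of the strong foliations. Your concern about having to ``revisit'' the proof in \cite{AV2} is unnecessary: the paper simply quotes Theorems~\ref{den} and~\ref{int} and Corollary~\ref{nuevo} as black boxes, exactly as you do, and the uniform leg-length bounds you worry about are deferred to the separate Corollary~\ref{uniform} rather than being needed for the Proposition itself.
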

\begin{proof}
Fix a small 2-disk $D$ and $\eta_u$ and $\eta_s$ like in the hypotheses of Theorem \ref{int}. By Theorem \ref{den}, there exist a deformation path $\gamma_u$ based on $(f,x)$ which is uniformly close to $\eta_u$ and a deformation path $\gamma_s$ based on $(f,y)$ which is uniformly close to $\eta_s$. Moreover, by Corollary \ref{nuevo}, if $k$ big enough, there exist
\begin{enumerate}[label=(\alph*)]
\item a deformation path $\gamma^k_u$ based on $(f_k,x)$ which is still close to $\eta_u$ and 
\item a deformation path $\gamma^k_s$ based on $(f_k,y_k)$ which is still close to $\eta_s$.
\end{enumerate}
Theorem \ref{int} implies that there exist $t^k_u, t^k_s\in [0,1]$ and $w_k\in M$ such that $$w_k\in W^u(\gamma^k_u(t^k_u),f_k)\cap W^s(\gamma^k_s(t^k_s),f_k).$$
Then, for every $k$ big enough, we have a $su$-path for $f_k$, joining $x$ to $y_k$, denoted by $\zeta_k$ and defined by the nodes of $\gamma_u^k(t_u^k)$, the intersection point $w_k$ and the nodes of $\gamma_s^k(t_s^k)$. 

By compactness, there exist a subsequence $k_j$ and $t_u$, $t_s$ such that $$t^{k_j}_{u}\rightarrow t_u\quad \text{and} \quad t^{k_j}_{s}\rightarrow t_s.$$

Since $f_k\rightarrow f$ and $W^s$ and $W^u$ are continuous under the variation of the diffeomorphism we can find $w\in M$ such that $w_k\rightarrow w$ and $$w\in W^u(\gamma_u(t_u),f)\cap W^s(\gamma_s(t_s),f).$$ 

Denote $\zeta$ the $su$-path for $f$ joining $x$ to $y$ and defined by the nodes of $\gamma_u(t_u)$, the intersection point $w$ and the nodes of $\gamma_s(t_s)$.

Finally, by the construction of the $su$-paths and, again, Corollary \ref{nuevo}, we have that for every $\epsilon>0$ there exists $K\in \mathbb{N}$ such that the distance between the nodes of $\zeta$ and $\zeta_{k_j}$ is bounded by $\epsilon$ for every $k_j\geq K$.
\end{proof} 
The $su$-paths in the Proposition above can be chosen in a uniform way. That is, with a uniform number of legs and a uniform bound for the distance between the nodes.
\begin{cor}\label{uniform} Let $f$ be a partially hyperbolic accessible diffeomorphism with 2-dimensional center bundle. Then, there exist $L>0$ and $N>0$ such that for every $x,y\in M$, $y_k\rightarrow y$ and every sequence $f_k\rightarrow f$ in the $C^1$ topology, the su-paths defined by Proposition \ref{caminocont} can be taken to have at most $N$ legs and distance between the nodes bounded by $L$.
\end{cor}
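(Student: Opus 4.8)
The plan is to revisit the construction in the proof of Proposition~\ref{caminocont} and observe that all of its ingredients can be chosen with complexity bounded uniformly in $(x,y)\in M\times M$, at the cost of a single subdivision step at the end to control the leg lengths. First I would fix, once and for all and independently of $x,y$, a $2$-disk $D$ transverse to $E^s\oplus E^u$ together with two smooth curves $\eta_u,\eta_s\subset D$ meeting transversely, as in Theorem~\ref{int}; such a disk exists because $E^c$ is $2$-dimensional. The only part of the construction that then depends on the base points is the choice of a deformation path $\gamma_u$ based on $(f,x)$ close to $\eta_u$ and of a deformation path $\gamma_s$ based on $(f,y)$ close to $\eta_s$; the intersection step provided by Theorem~\ref{int} and the way the nodes are assembled into an $su$-path are insensitive to $x,y$. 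So the heart of the matter is to bound, uniformly in the base point, the number of blocks $m$ needed for such deformation paths.

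To do this I would fix $z\in M$: by Theorem~\ref{den} there is a deformation path $t\mapsto P_{m(z)}(f,z,\Gamma_z(t))$, with $m(z)\in\mathbb N$ and $\Gamma_z\colon[0,1]\to\mathbb R^{l(u+s)m(z)}$ continuous, that is as close to $\eta_u$ as desired; by Corollary~\ref{nuevo} and the continuity of the maps $P_m$ (Theorem~\ref{pls}), there are a neighbourhood $U_z$ of $z$ and a neighbourhood $\mathcal W_z$ of $f$ such that for every $w\in U_z$ and $g\in\mathcal W_z$ one has a deformation path based on $(g,w)$ with the same number of blocks $m(z)$, still close to $\eta_u$. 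Covering $M$ by finitely many $U_{z_1},\dots,U_{z_p}$, setting $\bar m_u=\max_i m(z_i)$ and $\mathcal W_u=\bigcap_i \mathcal W_{z_i}$, and padding each deformation path with zero blocks --- legitimate by $P_n(g,\cdot,0)=\mathrm{id}$ and relation (a) of Theorem~\ref{pls}, and amounting to appending trivial legs --- I would arrange that for every $w\in M$ and $g\in\mathcal W_u$ there is a deformation path based on $(g,w)$ with exactly $\bar m_u$ blocks, uniformly close to $\eta_u$; the associated $su$-path then has at most $l(u+s)\bar m_u$ legs, a bound depending only on $f$. The symmetric argument on the stable side yields $\bar m_s$, a neighbourhood $\mathcal W_s$ of $f$, and a finite cover of $M$ adapted to the stable deformation paths.

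Running the construction of Proposition~\ref{caminocont} with these uniform deformation paths, for any $x\in M$, $g\in\mathcal W_u\cap\mathcal W_s$, and $y'$ in the prescribed neighbourhood of a given $y$, Theorem~\ref{int} produces an intersection point and hence an $su$-path for $g$ from $x$ to $y'$ consisting of the legs of the $su$-path from $x$ given by the deformation-path parametrization, one $W^u$-leg and one $W^s$-leg to the intersection point, and the legs of the deformation path based on $(g,y')$ traversed backwards; its number of legs is at most $N_0:=l(u+s)(\bar m_u+\bar m_s)+2$, which depends only on $f$. Since each leg is a leaf segment parametrized by coordinates in a fixed compact set, the continuity of the $P_m$ and compactness of $M$ (shrinking $\mathcal W_u\cap\mathcal W_s$ if needed) give a bound $\bar L$, depending only on $f$, on the intrinsic length of every leg. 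Then, for a prescribed $L>0$, I would subdivide each leg into $q:=\lceil\bar L/L\rceil+1$ sub-legs of length $<L$, obtaining $su$-paths with at most $N:=N_0\,q$ legs and consecutive nodes at distance $<L$. Applying this with $g=f_k$ and $y'=y_k$ --- valid for all $k$ large enough that $f_k\in\mathcal W_u\cap\mathcal W_s$ and $y_k$ lies in the chosen neighbourhoods, in particular along the subsequence of Proposition~\ref{caminocont}, and likewise for the limiting $su$-path for $f$ --- yields the claim with $N$ and $L$ independent of $x,y$ and of the sequences.

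I expect the main obstacle to be the step where the pointwise information of Theorem~\ref{den} (each base point admits some deformation path of some complexity close to $\eta_u$) is upgraded to a uniform bound on the number of blocks over all of $M$. This is exactly what the persistence statement Corollary~\ref{nuevo} is designed for: it forces the required complexity to be locally constant, after which compactness of $M$ and the zero-block padding finish the argument. Everything downstream is routine --- the leg-length bound $\bar L$ is just a modulus of continuity of $P_{\bar m_u}$ and $P_{\bar m_s}$ on the relevant compact parameter sets, and the node control is a plain subdivision.
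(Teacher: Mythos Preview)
Your argument is correct and follows essentially the same route as the paper: reduce to finding, for each base point, a deformation path of bounded complexity close to the fixed transversal curves, get this pointwise from Theorem~\ref{den}, spread it locally via the persistence statement (Corollary~\ref{nuevo}), and then use compactness of $M$ to extract uniform bounds $N$ and $L$. The final subdivision step is unnecessary since $L$ is existentially quantified in the statement, but it is harmless.
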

\begin{proof}
Observe that in order to prove this Corollary it is sufficient to prove the following claim.
\begin{af} Fix $\eta\in C^0([0,1], M)$ and $\epsilon>0$. Then, there exist $L>0$ and $N>0$ such that for every $x\in M$ there exists a deformation path based on $(f,x)$, denoted by $\gamma$, which is $\epsilon$-close to $\eta$ and satisfies that for every $t\in [0,1]$, the su-path defined by $\gamma(t)$ has at most $N$ legs and the distance between the nodes is bounded by $L$. 
\end{af}
By Theorem \ref{den}, for every $x\in M$ there exists a deformation path based on $(f,x)$, that is $\epsilon$-close to $\eta$. Therefore, the claim follows from the persistence of the deformations under the variation of the base point and the compactness of $M$.
\end{proof}
Corollary \ref{uniform}, together with Remark \ref{bound} and Corollary \ref{compo} give the following result.
\begin{cor}\label{unifbound} Let $*\in \{\mu, \omega\}$, $r\geq 2$ and $f\in B^r_{*}(M)$. Then, there exists $C>0$ such that for every $x,y\in M$, $y_k\rightarrow y$ and every sequence $f_k\rightarrow f$ in the $C^1$ topology, the su-paths given by Proposition \ref{caminocont}, denoted by $\zeta_{k_j}$ and $\zeta$, can be taken to satisfy the following estimation for the holonomies defined by them,
$$d(H_{\zeta}(a), H_{\zeta_{k_j}}(b))\leq \psi(k_j) + C \,d(a,b), $$
where $\psi(k_j)$ goes to zero as $k_j$ goes to $\infty$.
\end{cor}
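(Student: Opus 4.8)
The strategy is to feed the $su$-paths provided by Corollary~\ref{uniform} into the holonomy estimate of Corollary~\ref{compo}, controlling the multiplicative factors by Remark~\ref{bound} and the additive ones by Proposition~\ref{continuity}. So fix $x,y\in M$, $y_k\to y$ and $f_k\to f$ in the $C^1$ topology, and apply Corollary~\ref{uniform}: after passing to a subsequence we get an integer $N$ and a length $L>0$, both independent of $x,y$ and of the sequences, together with $su$-paths $\zeta=[z_0,\dots,z_N]$ for $f$ from $x$ to $y$ and $\zeta_{k_j}=[z_0^j,\dots,z_N^j]$ for $f_{k_j}$ from $x$ to $y_{k_j}$, with $N$ legs each, with $dist(z_{i-1},z_i)<L$ and $dist(z_{i-1}^j,z_i^j)<L$, and with $\max_i dist(z_i,z_i^j)\to 0$ by Proposition~\ref{caminocont}(c). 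Shrinking $L$ (which only restricts the choice in Corollary~\ref{uniform}) we may take $L<R_1$, so consecutive nodes of each path lie in a common local strong leaf; then Remark~\ref{bound} gives a constant $C_0$ depending only on $f$ with $\|H_{z_i}\|\le 1+C_0L^\alpha$ on every leg of $\zeta$, and likewise for $\zeta_{k_j}$. Consequently every product of holonomy norms appearing in Corollary~\ref{compo} is $\le C:=(1+C_0L^\alpha)^N$, uniformly in $k_j$, which already accounts for the term $C\,d(a,b)$.

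Applying Corollary~\ref{compo} with $\zeta_f=\zeta$, $\zeta_g=\zeta_{k_j}$ (here $x_0=y_0=x$) now yields
$$d\bigl(H_\zeta(a),H_{\zeta_{k_j}}(b)\bigr)\ \le\ C\sum_{i=0}^{N-1}\psi\bigl(H_{z_{i+1}}\bigr)\ +\ C\,d(a,b),$$
where $\psi(H_{z_{i+1}})=d\bigl(H^{*}_{z_i,z_{i+1}}(f)(a_i),\,H^{*}_{z_i^j,z_{i+1}^j}(f_{k_j})(\pi_{z_i,z_i^j}(a_i))\bigr)$, $a_i=H_{z_i}\circ\cdots\circ H_{z_1}(a)$ and $*\in\{s,u\}$ according to the type of the $(i+1)$-st leg. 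It therefore suffices to bound each $\psi(H_{z_{i+1}})$ by a quantity going to $0$ with $k_j$ and uniform for $a$ in the unit sphere of $E^c(x,f)$ (the range needed in the applications); one then sets $\psi(k_j):=C\sum_{i=0}^{N-1}\sup_{\|a\|=1}\psi(H_{z_{i+1}})$. Fix $i$, say with the $(i+1)$-st leg strong-stable (the unstable case uses the analogue of Proposition~\ref{continuity} for $H^u$ noted after its proof). Since $\|H_{z_i}\circ\cdots\circ H_{z_1}\|\le C$, the vectors $a_i$ range over a fixed compact set as $a$ runs over the unit sphere, so the threshold $\delta$ of Proposition~\ref{continuity} (applied at $x=z_i$, $y=z_{i+1}$) can be chosen uniformly in $a_i$. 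For $k_j$ large one has $dist(z_i,z_i^j)<\delta$, $dist(z_{i+1},z_{i+1}^j)<\delta$, $d(a_i,\pi_{z_i,z_i^j}(a_i))=dist(z_i,z_i^j)<\delta$, and $f_{k_j}$ lies in the fixed $C^2$ neighbourhood $\mathcal{V}(f)$ of Section~3; Proposition~\ref{continuity} then gives $\psi(H_{z_{i+1}})<\epsilon$. Summing over the $N$ legs finishes the argument.

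The point requiring care, rather than computation, is the regularity gap in the last step: Proposition~\ref{continuity} asks the perturbed map to stay in a fixed $C^2$ neighbourhood of $f$, while the hypothesis only supplies $C^1$ convergence, and $C^1$ convergence does not by itself confine the $f_k$ to a common $C^2$ ball. This is harmless for the way the corollary is used, since in every application (Section~6, with the $f_k$ of Lemma~\ref{pertfinal}) the maps $f_k$ are $C^r$-close to $f$ within the fixed $\epsilon_0$ and hence lie in $\mathcal{V}(f)$; alternatively one adds "$f_k\in\mathcal{V}(f)$" to the statement. A secondary, routine point: $\pi_{z_i,z_i^j}(a_i)$ need not lie exactly in $E^c(z_i^j,f_{k_j})$, but since $E^c$ varies continuously in $C^0$ under $C^1$ perturbations and is $\alpha$-H\"older, it is within $O(dist(z_i,z_i^j))$ of that subspace, and the H\"older continuity of $H^s,H^u$ absorbs the discrepancy into yet another term tending to $0$, so it may be projected there first without affecting the conclusion.
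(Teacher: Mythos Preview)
Your proof is correct and follows exactly the route the paper indicates: the paper's entire argument is the single sentence ``Corollary~\ref{uniform}, together with Remark~\ref{bound} and Corollary~\ref{compo} give the following result,'' and you have spelled out precisely that combination, using Proposition~\ref{continuity} to make the $\psi$-terms vanish. Your identification of the $C^1$/$C^2$ regularity mismatch is a genuine observation that the paper does not address, and your resolution (that in every actual use the $f_k$ come from Lemma~\ref{pertfinal} and hence lie in the fixed $C^2$ neighbourhood $\mathcal{V}(f)$) is exactly right.
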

There are analogous estimations for $h_{\zeta}=\mathbb{P}(H_{\zeta})$ and $h_{\zeta_{k_j}}=\mathbb{P}(H_{\zeta_{k_j}})$.  We are going to use this result to prove Proposition \ref{soporte} and to conclude the proof of Theorem B in Section 7.

\section{Disintegration}

Let $*\in \{\mu, \omega\}$, $r\geq 2$, $f\in B^r_{*}(M)$ and assume $p$ is a pinching periodic point for $f$. That is, $Df^{n_p}|E^c(p)$ has two real eigenvalues with different norms, where $n_p=per(p)$. Then, there exist $C_1>0$, $\theta_0>0$, $\rho\in (0,1)$ and one-dimensional subspaces $E_1$, $E_2$ of $E^c_p$ such that for every $F_1$ and $F_2$ one-dimensional subspaces of $E^c_p$ with 
\begin{equation}\label{teta}
\max\{ \angle(E_1,F_1), \angle(E_2, F_2)\}<\theta_0,
\end{equation}
we have
\begin{equation}\label{frente}
\angle( Df^{n_pj}(E_1), Df^{n_pj}(F_1))\leq C_1\, \rho^j
\end{equation} 
 and
\begin{equation}\label{atras}
\angle( Df^{-n_pj}(E_2), Df^{-n_pj}(F_2))\leq C_1\, \rho^j,  
\end{equation} 
for every $j\geq 0$.

By the Invariance Principle, if $\lambda^c_1(f)=\lambda^c_2(f)$ almost everywhere, then every $\mathbb{P}(F)$-invariant probability measure $m$ with $\pi_{*}m=\mu$ admits a disintegration, $\{m_x: x\in M\}$, invariant by holonomies and continuous with the $\text{weak}^*$ topology. The continuity of $m_x$ and the invariance of $m$ implies that $\mathbb{P}(F(x))_{*}m_{x}=m_{f(x)}$ for every $x\in M$. 

Then, if $a,b\in \mathbb{P}(E^c_p)$ are defined by $a=[E_1]$ and $b=[E_2]$, we have 
\begin{equation}\label{period}
supp\; m_p\subset \{a,b\}.
\end{equation}

Let $f_k$ and $k_0$ be given by Lemma \ref{pertfinal} for some $\epsilon>0$. Suppose $\lambda^c_1(f_k)=\lambda^c_2(f_k)$ almost everywhere for every $k\geq k_0$. We will denote $F_k=Df_k|E^c(f_k)$ the center derivative cocycle and $\mathbb{P}(F_k)$ its projectivization. 

We can suppose that for every $k\geq k_0$, $f^j(p)\notin B_{\delta_k}(z_l)$ for every $j\in \{0,...,n_p-1\}$. Then, for every $k\geq k_0$ and any $\mathbb{P}(F_k)$-invariant probability measure $m^k$ with $\pi_{*}m^k=\mu$, we have $$supp\; m^k_p\subset \{a,b\}.$$ 

Moreover, if for every $k\geq k_0$ we fix some $m^k$, then there exist a subsequence $k_j$ and a measure $m$ in $\mathbb{P}(TM)$ such that $m^{k_j}\rightarrow m$ in the weak$^*$ topology. The limit measure $m$ has the following properties: 
\begin{enumerate}[label=(\alph*)]
\item $supp\; m \subset \mathbb{P}(E^c(f))$,
\item $m$ projects down to $\mu$,
\item $m$ is $\mathbb{P}(F)$-invariant.
\end{enumerate}
Denote $m^{k_j}_{p}$ and $m_{p}$ the element of the disintegration given by the Invariance Principle at $p$ for $m^{k_j}$ and $m$ respectively.  
\begin{prop}\label{soporte} If $\vert supp\, m_p \vert=1$, then there exist a subsequence of $k_j$, that we continue to denote $k_j$, and $K_0\in \mathbb{N}$ such that $$supp\; m_p\subset supp\; m^{k_j}_p,$$ for every $k_j\geq K_0$. 
\end{prop}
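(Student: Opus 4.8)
Let $T=\mathbb P(Df^{n_p}|_{E^c_p})$, the projective map of $\mathbb P(E^c_p)$ whose fixed points are the two eigendirections $a,b$, and write $\operatorname{supp}m_p=\{c\}$ with $c\in\{a,b\}$; call $c'$ the other eigendirection. The strategy is: (1) reduce the statement to ``$m^{k_j}_p=\delta_{c'}$ is impossible for infinitely many $j$''; (2) use $|\operatorname{supp}m_p|=1$ to see $m$ as the graph of a continuous section and deduce that the conditionals $m^{k_j}_x$ converge to $x\mapsto\delta_{\sigma(x)}$ \emph{in $\mu$-measure}; (3) use the persistence of $su$-paths from Section 5 to show that the hypothetical degeneracy $m^{k_j}_p=\delta_{c'}$ would force $m^{k_j}_q$ to stay uniformly far from $\delta_{\sigma(q)}$ for \emph{every} $q$; (4) contradict (2). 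For step (1): since $p$ is periodic and, for $k$ large, $f^i(p)\notin B_{\delta_k}(z_l)$ for $0\le i<n_p$, the maps $f_k$ and $f$ coincide together with their first derivatives on a neighbourhood of the orbit of $p$, so $E^c(f_k)_p=E^c(f)_p$ and $Df_k^{n_p}|_{E^c(p)}=Df^{n_p}|_{E^c(p)}$; hence $T$, $a$, $b$ do not depend on $k$. As $\operatorname{supp}m^{k_j}_p\subseteq\{a,b\}$ we may write $m^{k_j}_p=\tau_j\delta_c+(1-\tau_j)\delta_{c'}$, and $c\in\operatorname{supp}m^{k_j}_p$ precisely when $\tau_j>0$; so it suffices to rule out $\tau_j=0$ for infinitely many $j$. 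Suppose, for a contradiction, that (along a subsequence still denoted $k_j$) one has $m^{k_j}_p=\delta_{c'}$.

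\emph{Step 2.} Because $|\operatorname{supp}m_p|=1$, $f$ is accessible, and $\lambda^c_1(f)=\lambda^c_2(f)$ (this is part of the standing hypotheses of this section, under which $m_p$ is defined; otherwise Theorem B already holds for $f$), the bi-invariant, continuous disintegration of $m$ supplied by the Invariance Principle gives $m_x=\delta_{\sigma(x)}$ for \emph{every} $x\in M$, where $\sigma$ is a continuous, $\mathbb P(F)$- and $su$-holonomy-equivariant section of $\mathbb P(E^c(f))$ with $\sigma(p)=c$; thus $m=(\mathrm{id},\sigma)_{*}\mu$. For each $\epsilon>0$ the $\epsilon$-neighbourhood $G_\epsilon$ of $\operatorname{graph}(\sigma)$ is open in $\mathbb P(TM)$ with $m(G_\epsilon)=1$, so $m^{k_j}(G_\epsilon)\to1$ by the portmanteau theorem, i.e. $\int_M m^{k_j}_x\big(\mathbb P(E^c(f_{k_j})_x)\setminus B(\sigma(x),\epsilon)\big)\,d\mu(x)\to0$. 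By Chebyshev's inequality the set of $x$ with $m^{k_j}_x(B(\sigma(x),\epsilon))\ge1-\epsilon$ has $\mu$-measure tending to $1$: the conditionals converge to $x\mapsto\delta_{\sigma(x)}$ in $\mu$-measure.

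\emph{Step 3.} Fix an arbitrary $q\in M$. Applying Corollary \ref{unifbound} with base point $p$ and the constant sequence $y_k=q$, for every $k$ large there is an $su$-path $\zeta_k^{(q)}$ for $f_k$ joining $p$ to $q$ with at most $N$ legs and nodes at mutual distance $\le L$, where $N,L$ depend only on $f$; by bi-invariance of the disintegration of $m^{k_j}$, $m^{k_j}_q=(h_{\zeta_{k_j}^{(q)}})_{*}m^{k_j}_p=(h_{\zeta_{k_j}^{(q)}})_{*}\delta_{c'}$. By Remark \ref{bound} the linear maps $H_{\zeta_{k_j}^{(q)}}$ and their inverses are bounded in norm by a constant $C_*$ depending only on $f$; hence, along any subsequence, one can pass to a limit in the (finitely many, uniformly bounded) nodes to get an $su$-path $\zeta$ for $f$ joining $p$ to $q$, and by Corollary \ref{compo} together with the continuity of the invariant holonomies under $C^1$-perturbation (Proposition \ref{continuity}) one has $H_{\zeta_{k_j}^{(q)}}\to H_\zeta$, whence $m^{k_j}_q\to\delta_{\mathbb P(H_\zeta)(c')}$ along that subsequence. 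Since $\mathbb P(H_\zeta)(c)=\sigma(q)$ (again by bi-invariance, now for $f$) and $\mathbb P(H_\zeta)$ is bi-Lipschitz with constants depending only on $C_*$, we get $d\big(\mathbb P(H_\zeta)(c'),\sigma(q)\big)\ge c(C_*)\,d(c,c')=:\epsilon_0>0$, a bound independent of $q$. As this holds for \emph{every} subsequential limit, $\liminf_j d\big(m^{k_j}_q,\delta_{\sigma(q)}\big)\ge\epsilon_0$ for every $q\in M$.

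\emph{Step 4, and the main obstacle.} By Step 3, for each $q$ there is $J(q)$ with $d(m^{k_j}_q,\delta_{\sigma(q)})>\epsilon_0/2$ for all $j\ge J(q)$; Fatou's lemma then gives $\mu\{q: d(m^{k_j}_q,\delta_{\sigma(q)})>\epsilon_0/2\}\to1$, which contradicts Step 2 (applied with any $\epsilon<\epsilon_0/2$). Therefore $\tau_j>0$ for all large $j$, i.e. $\operatorname{supp}m_p\subseteq\operatorname{supp}m^{k_j}_p$. I expect Step 3 to be the delicate point and the reason the results of Section 5 are indispensable: the maps $x\mapsto m^{k_j}_x$ are \emph{not} known to be equicontinuous, so $m^{k_j}_q$ cannot be compared with $m^{k_j}_p$ by a naive limiting argument near $p$; one must instead realise $m^{k_j}_q$ as the image of $m^{k_j}_p$ under a holonomy along an $su$-path for $f_k$ whose number of legs and whose norm are controlled uniformly in $k$ and $q$, which is precisely what the persistence of accessibility under $C^1$-perturbations (Corollary \ref{unifbound}, based on \cite{AV2}) provides, and one has to keep track carefully of how these $su$-paths and their holonomies deform as $f_k\to f$.
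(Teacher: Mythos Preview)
Your argument is correct and uses the same circle of ideas as the paper (the graph structure of $m$ coming from $|\operatorname{supp}m_p|=1$, the portmanteau-type convergence $m^{k_j}(G_\epsilon)\to 1$, and the persistence of $su$-paths and their holonomies from Section~5), but you organise them differently. The paper proceeds \emph{directly}: from $m^{k_j}(T_\delta)\to 1$ it extracts a single point $x$ with $\operatorname{supp}m^{k_j}_x\cap T_\delta\neq\emptyset$ for all large $j$, applies Proposition~\ref{caminocont} once to get $su$-paths $\zeta_{k_j}$ from $x$ to $p$, and then transports a point of $\operatorname{supp}m^{k_j}_x$ near $\sigma(x)$ to a point of $\operatorname{supp}m^{k_j}_p$ which, by Corollary~\ref{unifbound} and the choice of $\delta$, must equal $a$. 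You instead argue by contradiction, transporting the hypothetical Dirac $\delta_{c'}$ at $p$ \emph{outward} to every $q$, and you need a uniform lower bi-Lipschitz bound for the projective holonomies over all limiting paths $\zeta$ to get a $q$-independent $\epsilon_0$; this is where the uniform bounds $N,L$ of Corollary~\ref{uniform} are essential.

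Both routes work; the paper's is shorter because it needs only one application of Proposition~\ref{caminocont} and no uniform-in-$q$ estimate. Two small points to tighten in your write-up: (i) Proposition~\ref{caminocont} literally produces paths along a subsequence, but its proof actually constructs $\zeta_k$ for \emph{all} large $k$ (the subsequence is only used to get the limiting $\zeta$), so your Step~3 claim that $\zeta_k^{(q)}$ exists for all large $k$ with bounded $N,L$ is justified once you point back to that proof; (ii) the bi-invariance of the disintegration of $m^{k_j}$ on all of $M$ used in Step~3 requires the standing hypothesis $\lambda^c_1(f_k)=\lambda^c_2(f_k)$ together with accessibility of $f_k$, which you should state explicitly.
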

\begin{proof} 
Suppose that $supp\; m_p=\{a\}$. The case $supp\; m_p=\{b\}$ is analogous. 

Consider $C>0$ given by Corollary \ref{unifbound} and fix some $0 <\delta< d(a,b)/4C$.

Define the function $\xi:M\longrightarrow \mathbb{P}(\mathcal{V})$ by $\xi(x)=(x,supp\; m_x)$ and the set $$T_{\delta}=\left\{(x,v)\in \mathbb{P}(TM): (x,v)\in B_{\delta}(\xi(x))\right\}.$$ The Invariance Principle implies that the function $\xi$ is continuous and therefore $T_{\delta}$ is an open set. Moreover, by definition, $m(T_{\delta})=1$. These two properties imply that $$m^{k_j}(T_{\delta})=\int m^{k_j}_x(T_{\delta}\cap \mathbb{P}(E^c(x,f_{k_j}))) d\mu(x)\rightarrow 1.$$ Then, there exist a subsequence of $k_j$, that we continue to denote $k_j$, $x\in M$ and $K_1\in \mathbb{N}$ such that for every $k_j\geq K_1,$ $$T_{\delta}\cap supp\;m^{k_j}_{x}\neq \emptyset.$$  

We apply Proposition \ref{caminocont} to $f_{k_j}$, $x$ and $y_{k_j}=p$. Then, we have a new subsequence, that we continue to denote $k_j$, $su$-paths for $f_{k_j}$ denoted by $\zeta_{k_j}$ and a $su$-path for $f$ denoted by $\zeta$, all joining $x$ to $p$. 

Denote $h$ the holonomy defined by $\zeta$ for $\mathbb{P}(F)$ and $h_{k_j}$ the holonomy defined by $\zeta_{k_j}$ for $\mathbb{P}(F_{k_j})$. By Corollary \ref{unifbound}, there exist $C>0$ and a function $\psi(k_j)$ going to zero as $k_j\to \infty$, such that for every $a'\in \mathbb{P}(E^c(x,f))$ and every $b'\in \mathbb{P}(E^c(x,f_{k_j}))$, we have $$d(h(a'), h_{k_j}(b'))< \psi(k_j) + C\,d(a', b').$$
 
Since the disintegration given by the Invariance Principle is invariant by holono\-mies and we suppose $supp\; m_p=\{a\}$, we have $supp\; m_{x}=h^{-1}(a)$. Moreover, since $$supp\; m^{k_j}_{x}\cap T_{\delta}\neq \emptyset,$$ there exists $$a'_{k_j}\in supp\, m^{k_j}_{x}\quad \text{with} \quad d(h^{-1}(a), a'_{k_j})<\delta \quad \text{for every} \; k_j\geq K_1.$$

Define $a_{k_j}=h_{k_j}(a'_{k_j})$. Then, $a_{k_j}\in supp\; m^{k_j}_p$ and for $k_j$ big enough, $$d(a, a_{k_j})=d(h(h^{-1}(a)), h_{k_j}(a'_{k_j}))< \psi(k_j) + C\, d(h^{-1}(a), a'_{k_j})<d(a,b)/2.$$ Since $supp\, m^{k_j}_p\subset \{a,b\}$, this implies $a_{k_j}=a$ and finish the proof. 
\end{proof}

\section{Proof of the Theorems}

In this Section we give the details of the proof of Theorem B and explain how to conclude Theorem A. 

\subsection{Proof of Theorem B}

\begin{defn*} Let $f$ be a partially hyperbolic diffeomorphism and $p$ a periodic point with $n_p=per(p)$. We say that $p$ is a \emph{pinching periodic point} if $Df^{n_p}|E^c(p)$ has two real eigenvalues with different norms.
\end{defn*} 
\begin{teocon} Let $*\in \{\mu,\omega\}$, $r\geq 2$, $f\in B^{r}_{*}(M)$ and assume $f$ has a pinching periodic point, then $f$ can be $C^r$-approximated by volume-preserving (symplectic) diffeomorphisms whose center Lyapunov exponents are different. 
\end{teocon}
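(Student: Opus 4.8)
The plan is to argue by contradiction, using the Invariance Principle to impose a rigid structure that the perturbations of Section~4 then violate. If $\lambda^c_1(f)\neq\lambda^c_2(f)$ there is nothing to prove, so assume $\lambda^c_1(f)=\lambda^c_2(f)$ almost everywhere and suppose that some $C^r$-neighborhood $\mathcal U$ of $f$ consists entirely of diffeomorphisms with coincident center Lyapunov exponents. Fix $\epsilon>0$ small enough that the sequence $f_k$ furnished by Lemma~\ref{pertfinal} lies in $\mathcal U$ for all large $k$; then $\lambda^c_1(f_k)=\lambda^c_2(f_k)$ a.e., each $f_k\in B^r_{*}(M)$, and $f_k\to f$ in the $C^1$ topology. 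Since $f_k$ is accessible, center bunched and volume-preserving and $\mathbb P(F_k)$ admits holonomies by the results of Section~3, the Invariance Principle applies to the center derivative cocycle $F_k=Df_k|E^c(f_k)$: every $\mathbb P(F_k)$-invariant probability $m^k$ with $\pi_*m^k=\mu$ has a continuous, bi-invariant disintegration $\{m^k_x\}$. Arranging, as in \eqref{deltak} and Lemma~\ref{pertfinal}, that the orbit of the pinching periodic point $p$ stays at positive distance from $z_l$, we have $f_k=f$ on a neighborhood of that orbit, hence $Df_k^{n_p}|E^c(p)=Df^{n_p}|E^c(p)$ and $E^c(p,f_k)=E^c(p,f)$; combining continuity of $x\mapsto m^k_x$ with $\mathbb P(F_k)$-invariance along the periodic orbit then forces $supp\,m^k_p\subset\{a,b\}$, exactly as in \eqref{period}, where $a=[E_1]$, $b=[E_2]$ are the eigendirections of $Df^{n_p}|E^c(p)$.

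Next, pass to a subsequence with $m^{k_j}\to m$ in the weak$^{*}$ topology. Because $f_{k_j}\to f$ in $C^1$ and $E^c(f_{k_j})\to E^c(f)$, the limit $m$ is supported in $\mathbb P(E^c(f))$, projects to $\mu$, and is $\mathbb P(F)$-invariant, so the Invariance Principle for $F$ yields a continuous bi-invariant disintegration $\{m_x\}$ with $supp\,m_p\subset\{a,b\}$. I distinguish two cases according to $|supp\,m_p|$.

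Suppose first $|supp\,m_p|=1$, say $supp\,m_p=\{a\}$; then in fact $|supp\,m_x|=1$ for every $x$, since bi-invariance gives $supp\,m_x=h_{p,x}(a)$ along any $su$-path. By Proposition~\ref{soporte}, after a further subsequence $a\in supp\,m^{k_j}_p$ for all large $j$. Using Proposition~\ref{caminocont} and Corollary~\ref{uniform} I complete the continuation $\zeta_{k_j}=[z^{k_j}_0,\dots,z^{k_j}_N]$ of the path $\zeta$ from Proposition~\ref{pivot} by an auxiliary $su$-path for $f_{k_j}$ joining $p_{k_j}=z^{k_j}_N$ back to $p$, chosen uniformly close to a fixed $su$-loop for $f$ and with uniformly bounded legs kept away from the shrinking perturbation region; this produces a closed $su$-path $\xi_{k_j}$ at $p$ for $f_{k_j}$. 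Bi-invariance of $\{m^{k_j}_x\}$ forces $h_{\xi_{k_j}}(a)\in supp\,m^{k_j}_p\subset\{a,b\}$. On the other hand, Corollary~\ref{holfinal} (for the two halves $\zeta_1,\zeta_2$ of $\zeta$), Propositions~\ref{su} and \ref{angulos} (for the base points and center bundles), and Corollary~\ref{unifbound} (for the closing path, which can be kept away from $B_{\delta_{k_j}}(z_l)$) show that $h_{\xi_{k_j}}(a)$ equals the corresponding $f$-loop holonomy applied to the $\beta_{k_j}$-rotated direction $R_{\beta_{k_j}}^{-1}(a)$ of \eqref{rot}, up to errors that go to zero faster than $\beta_{k_j}$ — exponentially so for the terms in Corollary~\ref{holfinal}. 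Since the $f$-loop holonomy fixes $a$ (Invariance Principle for $f$, $supp\,m_p=\{a\}$), while $\sin\beta_{k_j}=C_0\,\delta_{k_j}^{\,r-1}\epsilon$ decays only polynomially in $k_j$ by \eqref{deltak}, for $j$ large $h_{\xi_{k_j}}(a)$ lies at distance $\gtrsim\beta_{k_j}>0$ from $a$ and, being $O(\beta_{k_j})$-close to $a$, far from $b$; hence $h_{\xi_{k_j}}(a)\notin\{a,b\}$, a contradiction. The case $supp\,m_p=\{b\}$ is symmetric.

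The two-atom case $supp\,m_p=\{a,b\}$ reduces to the previous one: the continuous $2$-point field $x\mapsto supp\,m_x$ is $\mathbb P(Df)$-invariant and invariant under all $su$-loop holonomies, so one passes to the associated connected double cover $\widehat M\to M$ with the lifted diffeomorphism $\widehat f$ — still partially hyperbolic, accessible, $\alpha$-pinched and $\alpha$-bunched, with a pinching periodic point over $p$ at which the conditional measure is a single atom — runs the above argument upstairs, and descends the resulting perturbation. I expect the main obstacle to be precisely the point already flagged in the paper's strategy: the maps $x\mapsto m^k_x$ are not known to be equicontinuous, so one cannot simply compare conditionals at $p$ and at the nearby points $p_{k_j}$, $z^{k_j}_l$. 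This is exactly what the hyperbolicity of $p$, Proposition~\ref{soporte} (Section~6), and the perturbation-continuity of $su$-paths in Proposition~\ref{caminocont} and Corollary~\ref{unifbound} (Section~5) are brought in to circumvent, by pinning $supp\,m^{k_j}_p$ to a point of the finite set $\{a,b\}$ rather than tracking it along a moving base point. Finally, in the symplectic case $\lambda^c_1=\lambda^c_2$ is equivalent to both exponents vanishing, so "different" upgrades to "some center Lyapunov exponent non-zero".
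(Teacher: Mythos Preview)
Your overall scheme---contradiction via the Invariance Principle, the inclusion $supp\,m^k_p\subset\{a,b\}$, and using the rotation $R_{\beta_k}$ from Corollary~\ref{holfinal} to dislodge the support---is the paper's. Two steps, however, are genuine gaps.

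In the one-atom case you close the continuation $\zeta^{k_j}$ (which ends at $p_{k_j}\neq p$) by an auxiliary $su$-path from $p_{k_j}$ back to $p$ and assert the resulting holonomy error is $o(\beta_{k_j})$. That rate is not available: Proposition~\ref{caminocont} and Corollary~\ref{unifbound} give only $\psi(k_j)\to 0$ with no rate, and any quantitative bound on the closing path would be governed by the $C^1$-distance $d_{C^1}(f_{k_j},f)$, which is itself of order $\beta_{k_j}$; so the closing error is at best $O(\beta_{k_j})$, not smaller, and you cannot separate $h_{\xi_{k_j}}(a)$ from $a$. The paper does \emph{not} close the loop. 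It works directly at $p_{k_j}$: set $q_k=f_k^{-n_pk}(p_k)$, use Proposition~\ref{caminocont}/Corollary~\ref{unifbound} once, with no rate needed, to find $d_k\in supp\,m^k_{q_k}$ merely within the cone of aperture $\theta_0$ around $a$, and then push forward by $\mathbb P(F_k^{n_pk})$. The pinching at $p$ (Equations~\eqref{frente}--\eqref{atras}) contracts this to $c_k\in supp\,m^k_{p_k}$ with $d(a,c_k)$ \emph{exponentially} small (Equation~\eqref{nodoscerca}); that is what feeds into \eqref{direc} and beats the polynomial $\beta_k$. This forward-iteration trick is precisely the ``hyperbolicity of $p$'' step flagged in the strategy, and your argument bypasses it.

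Your reduction of the two-atom case to the one-atom case via a double cover is also not what the paper does, and besides inheriting the gap above it would require checking accessibility of the lift and compatibility of the lifted $m^{k_j}$. The paper's dichotomy is simpler and is taken on the $m^k$, not on the limit $m$: either (i) for every $k$ one can choose $m^k$ with $supp\,m^k_p=\{a,b\}$---then both atoms are available and the Claim producing $d_k$ goes through directly, whichever of $a,b$ the auxiliary $f$-loop holonomy picks; or (i) fails at some $k_1$, meaning every $\mathbb P(F_{k_1})$-invariant lift has one-atom support at $p$, and one restarts the entire construction with $f_{k_1}$ playing the role of $f$. This is case (ii), where Proposition~\ref{soporte} is invoked; no covering space is needed.
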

\begin{proof}
Let $f\in B^r_{*}(M)$ and $p$ be a pinching periodic point, with $n_p=per(p)$. Consider $F=Df\vert E^c$ and suppose $\lambda^c_1(f)=\lambda^c_2(f)$ almost everywhere.

We proved in the previous section (Equation (\ref{period})) that there exist $a,b\in \mathbb{P}(E^c_p)$ such that for every $\mathbb{P}(F)$-invariant probability measure $m$ with $\pi_{*}m=\mu$, the element of the disintegration given by the Invariance Principle at $p$ satisfies, $$supp\; m_p\subset \{a,b\}.$$

Fix a neighborhood in the $C^r$-topology, $\eta^r_{*}(f, \mathcal{U}, \mathcal{V}, \epsilon)$, with $\epsilon>0$ small enough.  

Consider the $su$-path given by Proposition \ref{pivot} for $f$ and $p$ and let $C_0>0$, $k_0\in \mathbb{N}$ and $f_k$ be defined by Lemma \ref{pertfinal} for this $\epsilon$. Then, for every $k\geq k_0$, $f_k$ has the following properties:
\begin{enumerate} [label=(\alph*)]
\item $f_k\in B^r_{*}(M)\cap\mathcal{\eta}^r_{*}(f,\mathcal{U}, \mathcal{V},\epsilon)$,
\item $f_k(x)=f(x)$ if $x\notin B_{\delta_k}(z)$
\item $f_k(z)=f(z)$ and 
\item $Df_k(z)=Df(z) \circ A_{\beta_k}$ with $\sin\, \beta_k=C_0\, \delta_k^{r-1}\, \epsilon$.
\end{enumerate}  
Moreover, $f_k\rightarrow f$ in the $C^1$ topology when $k\to \infty$.

Denote $F_k=Df_k\vert E^c_k$ and suppose $\lambda^c_1(f_k)=\lambda^c_2(f_k)$ almost everywhere. 

We can assume that for every $k\geq k_0$, $f^j(p)\notin B_{\delta_k}(z_l)$ for every $j\in \{0,...,n_p-1\}$. Then, for any $\mathbb{P}(F_k)$-invariant probability measure $m^k$ with $\pi_{*}m^k=\mu$, we have $$supp\, m^k_p\subset \{a,b\}.$$ 

Consider the following two cases: 
\begin{enumerate}[label=(\roman*)]
\item $\forall\: k\geq k_0$ there exists a $\mathbb{P}(F_k)$-invariant probability measure $m^k$ with $\pi_{*}m^k=\mu$ such that $supp\, m^k_p=\{a,b\}$.
\item $\forall\; \mathbb{P}(F)$-invariant probability measure $m$ with $\pi_{*}m=\mu$, we have $$\vert supp\, m_p \vert=1.$$
\end{enumerate}

It is enough to prove Theorem B for these cases. Suppose the sequence of perturbations $f_k$ does not satisfy the condition in case (i). Then, there exists $k_1\geq k_0$ such that for every $\mathbb{P}(F_{k_1})$-invariant probability measure $m^{k_1}$ with $\pi_{*}m^{k_1}=\mu$, we have $\vert supp\, m^{k_1}_p \vert=1.$ Then, $f_{k_1}$ is a diffeomorphism satisfying case (ii). If Theorem B is true in this case, we will be able to find a diffeomorphism $g$ which is $\epsilon$-close to $f_{k_1}$ in the $C^r$ topology and such that $\lambda^c_1(g)\neq \lambda^c_2(g)$ at almost every point. Moreover, since $f_{k_1}$ is $\epsilon$-close to $f$, $g$ will be $2\,\epsilon$-close to $f$. This will prove the Theorem.

In the following, we give the details of the proof of Theorem B for case (i) and (ii). Since the two arguments are very similar, we are going to explain them simultaneously. 

Let $K\in \mathbb{N}$ be given by Corollary \ref{holfinal}. If we are in case (i), for every $k\geq K$, there exists $m^k$ such that $supp\, m^k_p=\{a,b\}$. Then, via a subsequence, we can suppose that there exists a $\mathbb{P}(F)$-invariant probability measure $m$ with $\pi_{*}m=\mu$ such that $m=\lim m^k$. If we are in case (ii), for every $k\geq K$, we choose any $\mathbb{P}(F_k)$-invariant probability measure $m^k$ and define $m=\lim m^k$. Then, $\vert supp\, m_p\vert=1$ and we can apply Proposition \ref{soporte}. From now on, if we are in case (ii), it is understood that $f_k$ denotes the subsequence that verifies Proposition \ref{soporte} and $K\geq \widehat{K}$.

For every $k\geq K$, let $\{m^k_{x} : x\in M\}$ and $\{m_x: x\in M\}$ be the disintegrations given by the Invariance Principle for $m^k$ and $m$ respectively. Suppose $a\in supp\, m_p$, the other case is analogous. 

By Proposition \ref{su} there exist $C>0$, $\tau\in (0,1)$ and $p_k=z^{k}_N$ such that $$dist(p,p_k)<C\, \tau^{\sigma_1 k}.$$

The following argument allow us to find a point in the $supp\, m^k_{p_k}$ which is exponentially close to $supp\, m^k_p$. The hypothesis of $p$ being a pinching periodic point is essential here. We need this estimation in order to apply Corollary \ref{holfinal} and get a contradiction. 

Define $$q_k=f_k^{-n_p k}(p_k).$$ Then, there exists $C_2>1$ such that 
\begin{equation}\label{ques}
dist(p,q_k)\leq C_2^{-n_p k} dist(p, p_k) \leq C\, (C_2^{-n_p} \tau^{\sigma_1})^{k}.
\end{equation} Here $C_2$ depends on the functions in Equation (\ref{ph}) and $\sigma_1=\sigma\,\alpha^{N}$. The constant $\sigma$ is defined in Equation (\ref{deltak}) and was chosen in order to have this expression going to zero as $k\to \infty$. 
\begin{af} Let $\theta_0>0$ be the constant defined in Equation (\ref{teta}). Then, there exist $K_1\in \mathbb{N}$ such that for every $k\geq K_1$, there exists $d_{k}\in supp\, m^{k}_{q_{k}}$ with $d(a, d_{k})< \theta_0$. 
\end{af} 
\begin{proof} 
Since $q_k\rightarrow p$, we can apply Theorem \ref{caminocont} for $f_{k}$, $x=p$ and $y_k=q_{k}$. Then, there exist $su$-paths for $f_{k}$ denoted by $\zeta_{k}$ joining $p$ to $q_k$ and a $su$-path for $f$ denoted by $\zeta$ joining $p$ to $p$. Moreover, they satisfy Corollary \ref{unifbound}.

Denote by $h$ the holonomy defined by $\zeta$ for $\mathbb{P}(F)$ and $h_{k}$ the holonomy defined by $\zeta_k$ for $\mathbb{P}(F_k)$.

If we are in case (i), we have to possibilities: $h(a)=a$ or $h(a)=b$. Then, define 
\begin{equation*}
d_{k}=\begin{cases}
       h_{k}(a) &\text{if}\; h(a)=a,\\
			 h_{k}(b) &\text{if}\; h(a)=b. 
\end{cases}
\end{equation*}
Since $supp\, m^k_p=\{a,b\}$, in any case we have $d_k\in supp\, m^k_{q_k}$.

If we are in case (ii), then $h(a)=a$ and we define $d_k=h_{k}(a)$. Proposition \ref{soporte} implies that $d_k\in supp\, m^k_{q_k}$.

Then, by Corollary \ref{unifbound} there exist $\psi(k)\to 0$ as $k\to \infty$, such that $$d(a, d_k)\leq \psi(k).$$ Therefore, choose $K_1$ big enough in order to have $\psi(k)<\theta_0$ for every $k\geq K_1$.
\end{proof}
As before, $P_{E}$ will denote the orthogonal projection to $E$ and $\pi_{x,y}$ the parallel transport between $x$ and $y$. 

Define $$a_k= \left[P_{E^c_p} \circ \pi_{q_k, p} (d_k)\right],$$ where $[*]$ denote the class in the projective space, and $$c_k=\mathbb{P}(F^{n_p}_k(q_k)) (d_k).$$ 
We have the following consequences: $$a_k\in \mathbb{P}(E^c_p), \quad \angle(a, a_k)<\theta_0\quad \text{and} \quad c_k\in supp\; m^k_{p_k}.$$ 
Then, there exist $C_1>0$, $\rho\in (0,1)$ and $C_3>1$ such that 
\begin{equation}\label{nodoscerca}
\begin{aligned}
d(a,c_k)&=d(\mathbb{P}(F^{n_p k}(p))(a), \mathbb{P}(F^{n_p k}_k(q_k))(d_k))\\
        &\leq d(\mathbb{P}(F^{n_p k}(p))(a), \mathbb{P}(F^{n_p k}(p))(a_k))\\
                                &+ d(\mathbb{P}(F^{n_p k}_k(p))(a_k), \mathbb{P}(F^{n_p k}_k(q_k))(d_k))\\
				&\leq C_1\; \rho^k + C_3^{n_p k}\, dist(p, q_k)\\
				&\leq C_1\; \rho^k + C\, (C_3^{n_p}C_2^{-n_p}\tau^{\sigma_1})^k.\\
\end{aligned}
\end{equation} 
The estimation in the first term is a consequence of Equation (\ref{frente}). Since, the constant $C_3$ depends only on the functions in Equation (\ref{ph}), we can suppose that $\sigma$ in Equation (\ref{deltak}) was chosen to have the expression on the second term going to zero exponentially fast as $k\to \infty$.

If $\zeta=[z_0,...,z_N]$ is the $su$-path given by Proposition \ref{pivot} for $f$ and $p$, let $$\zeta_1=[z_0,...,z_l]\quad \text{and} \quad \zeta_2=[z_l,...,z_N].$$ Denote $H_{\zeta_i}$ and $h_{\zeta_i}$ the holonomies defined by $\zeta_i$ for $F$ and $\mathbb{P}(F)$ respectively, with $i\in \{1,2\}$. Then, $$H_{\zeta_i}:E^c(p)\to E^c(z_l, f), \quad \text{and} \quad h_{\zeta_i}:\mathbb{P}(E^c(p))\to \mathbb{P}(E^c(z_l, f)),$$ for $i\in \{1,2\}$.

By Proposition \ref{su}, we have $$z_k=z_l^{k}\qquad p_k=z^{k}_N,$$ and $su$-paths for $f_k$, $$\zeta^k_1=[p,...,z_k]\quad \text{and} \quad\zeta^k_2=[p_k,...,z_k].$$ Denote $H_{\zeta^k_i}$ and $h_{\zeta^k_i}$ the holonomies defined by $\zeta^k_i$ for $F_k$ and $\mathbb{P}(F_k)$ respectively, with $i\in \{1,2\}$. Then, $$H_{\zeta_1^k}:E^c(p)\to E^c(z_k, f_k),\quad  \quad H_{\zeta_2^k}:E^c(p_k, f_k)\to E^c(z_k, f_k),$$
$$h_{\zeta_1^k}:\mathbb{P}(E^c(p))\to \mathbb{P}(E^c(z_k, f_k))\quad \text{and} \quad h_{\zeta_2^k}:\mathbb{P}(E^c(p_k, f_k))\to \mathbb{P}(E^c(z_k, f_k)).$$

Although we have two possibilities, $supp\, m_p=\{a\}$ or $supp\, m_p=\{a,b\}$, we are going to consider only the second case, because it imposes more restrictions. Moreover, we can suppose there exist $c, d\in \mathbb{P}(E^c(z_l,f))$ such that $c=h_{\zeta_1}(a)=h_{\zeta_2}(a)$ and $d=h_{\zeta_1}(b)=h_{\zeta_2}(b)$. The other cases are analogous.

In order to simplify the notation, we are going to use the same symbol to denote both a nonzero vector in $E^c_x$ and the corresponding element of $\mathbb{P}(E^c_x)$. 

If $\Phi_k:E^c(z_k,f_k)\to E^c(z_l,f)$ is defined by $\Phi_k=P_{E^c(z_l,f)}\circ \pi_{z_k,{z_l}}$, then for $k$ big enough $\Phi_k$ is an isomorphism. By Corollary \ref{holfinal} and Equation (\ref{nodoscerca}), there exist $C>0$, $\lambda\in (0,1)$ and $K\in \mathbb{N}$ such that for every $k\geq K$ there exists $c_k\in supp\; m^k_{p_k}$ such that 
\begin{enumerate}[label=(\alph*)]
\item $\left\|R_{\beta_k}^{-1}(c)-\Phi_k(H_{\zeta^k_1}(a))\right\|< C\; \lambda^k$,
\item $\left\|R_{\beta_k}^{-1}(d)-\Phi_k(H_{\zeta^k_1}(b))\right\|< C\; \lambda^k$ and 
\item $\left\|c -\Phi_k(H_{\zeta^k_2}(c_k))\right\|< C\; \lambda^k$,
\end{enumerate}
where $R_{\beta_k}:E^c(z_l,f)\to E^c(z_l,f)$ is the rotation of angle $\beta_k>0$ defined by Lemma \ref{pertfinal}.

Since $$\frac{\lambda^k}{\sin^2 \beta_{k}}\rightarrow 0\quad \text{when} \quad k\to \infty,$$ and $\Phi_k$ is an isomorphism, we have that for $k$ big enough, the one-dimensional subspaces generated by $H_{\zeta^k_1}(a)$, $H_{\zeta^k_1}(b)$ and $H_{\zeta^k_2}(c_k)$ are all different. In the projective level this means, $$h_{\zeta^k_1}(a), h_{\zeta^k_1}(b) \neq h_{\zeta^k_2}(c_k).$$ 

On the other hand, since $supp\; m^k_p\subset \{a,b\}$, the invariance by holonomies given by the Invariance Principle implies $$supp\; m^k_{z_k}\subset \{h_{\zeta^k_1}(a), h_{\zeta^k_1}(b) \}.$$ 
Moreover, since $c_k\in supp\; m^k_{p_k}$, then $h_{\zeta^k_2}(c_k)\in supp\; m^k_{z_k}$. 

We arrive to this contradiction because we were assuming that the Invariance Principle could be applied for every $f_k$ with $k\geq k_0$. Then, there exist $k_1\in \mathbb{N}$ such that $\lambda^c_1(f_{k_1})\neq \lambda^c_2(f_{k_1})$ at almost every point. By the definition of $f_{k_1}$, we  know that it is $\epsilon$-close $f$ in the $C^r$ topology. Since $\epsilon>0$ was chosen arbitrarily, this conclude the proof of Theorem B. 
\end{proof}
 
\subsection{Proof of Theorem A} 

\begin{teosim} Let $r\geq 2$, $f\in B^{r}_{\omega}(M)$ and assume the set of periodic points of $f$ is non-empty, then $f$ can be $C^r$-approximated by non-uniformly hyperbolic symplectic diffeomorphisms.
\end{teosim}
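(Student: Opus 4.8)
We deduce the statement from Theorem B. It is enough to prove that $f$ can be $C^r$-approximated by diffeomorphisms in $B^r_{\omega}(M)$ that have a \emph{pinching} periodic point. Indeed, suppose $h\in B^r_{\omega}(M)$ has a pinching periodic point and is $C^r$-close to $f$. By Theorem B, $h$ can in turn be $C^r$-approximated by symplectic diffeomorphisms $g$ with $\lambda^c_1(g)\neq\lambda^c_2(g)$ almost everywhere. Since $g$ is symplectic and $\dim E^c=2$, the symmetry of the Lyapunov spectrum recalled in Section~2 (see \cite{BV1}) gives $\lambda^c_1(g)=-\lambda^c_2(g)$ at almost every point, so that $\lambda^c_1(g)\neq\lambda^c_2(g)$ forces $\lambda^c_1(g)>0>\lambda^c_2(g)$ almost everywhere. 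Together with the uniform contraction and expansion along $E^s$ and $E^u$, this shows that all the Lyapunov exponents of $g$ are non-zero almost everywhere, i.e.\ $g$ is non-uniformly hyperbolic. As $h$ is arbitrarily $C^r$-close to $f$, the conclusion follows.

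Let $p$ be a periodic point of $f$ with $n_p=per(p)$. If $Df^{n_p}|E^c(p)$ has two real eigenvalues of different norms, then $p$ is already a pinching periodic point and we are done with $h=f$. Otherwise $Df^{n_p}|E^c(p)$ is a symplectic automorphism of the $2$-plane $E^c(p)$ whose trace has modulus at most $2$, so its eigenvalues lie on the unit circle: a complex conjugate pair $e^{\pm i\theta}$ with $\theta\in(0,\pi)$, or $\pm1$. Note that no arbitrarily small perturbation of $Df^{n_p}|E^c(p)$ alone can make $p$ pinching, since the set of symplectic $2\times2$ maps whose trace has modulus at most $2$ is a neighbourhood of $Df^{n_p}|E^c(p)$; a genuinely non-linear, though still $C^r$-small, local perturbation is therefore required.

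We perform this perturbation in two steps, localized near the finite orbit of $p$ and carried out symplectically, as in Section~4. First, composing $Df$ near $p$ with a rotation of small angle in a symplectic basis of the center bundle (a perturbation of the type produced by Lemma \ref{pert}), we replace the center multiplier of $p$ by a root of unity: we may assume that the resulting diffeomorphism $g_1$ has $Dg_1^{n_p}|E^c(p)$ equal to a rational rotation close to the original one, so that $Dg_1^{Nn_p}|E^c(p)=Id$ for some integer $N\geq1$. Second, a further $C^r$-small symplectic perturbation $h$ of $g_1$ supported in a small ball around $p$ --- disjoint from the rest of the orbit, so that it only modifies the return map $g_1^{Nn_p}$ near $p$ --- can be chosen so that $h$ has a hyperbolic periodic point $q$ near $p$ which is a saddle in restriction to the center bundle, hence has two real center eigenvalues of different norms; this is the classical mechanism by which a resonant elliptic (or parabolic) periodic point of a symplectic map produces nearby hyperbolic periodic orbits, and it can be realized explicitly through a generating function in the spirit of the perturbation lemmas of Section~4. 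Finally, partial hyperbolicity, accessibility for $2$-dimensional center bundle (Theorem \ref{avvi}), the $\alpha$-pinching and the $\alpha$-bunching conditions are all $C^1$-open, so if the total perturbation is small enough then $h\in B^r_{\omega}(M)$. Since $h$ is $C^r$-close to $f$ and has the pinching periodic point $q$, Theorem B applies to $h$ and, by the first paragraph, the proof is complete.

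The delicate point is the second step above: one must guarantee that a localized, $C^r$-small, \emph{symplectic} perturbation of a map whose center derivative at a fixed point is the identity actually creates a pinching periodic point in an arbitrarily small neighbourhood, while keeping the perturbation supported away from the remaining points of the orbit. This is the one local construction needed here that goes beyond those of Section~4, and it is where the restriction to the symplectic setting (as opposed to the volume-preserving setting of Theorem B) plays a role.
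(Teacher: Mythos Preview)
Your reduction to producing a pinching periodic point and then invoking Theorem~B is correct and matches the paper's hyperbolic case verbatim. The divergence is in the elliptic case, and there your argument has a genuine gap.

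The problem is your ``second step''. You assert that once $Dg_1^{Nn_p}|E^c(p)=Id$, a further $C^r$-small symplectic perturbation supported near $p$ produces a nearby periodic point $q$ that is a saddle in the center direction, calling this a ``classical mechanism''. But you do not construct it, and it is not a triviality: you are asking for a new periodic orbit to appear under a $C^r$-small, compactly supported perturbation, with prescribed (hyperbolic) center spectrum. Note also that since $p$ has period $n_p$, the orbit passes through your support $N$ times in $Nn_p$ iterates, and any $q$ you create has center multipliers only $C^0$-close to $Id$; showing one can arrange trace strictly above $2$ (symplectically, compactly supported, in full dimension with the $E^s\oplus E^u$ directions present) requires an actual argument. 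You yourself flag this as ``the delicate point'' that ``goes beyond those of Section~4'' --- that is precisely the missing content of the proof.

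The paper avoids this construction entirely. In the elliptic case it first uses Robinson's genericity result to make $p$ \emph{elementary} (center eigenvalues not roots of unity --- note this is the opposite move to your step~1), and then invokes Newhouse's theorem (Proposition~\ref{ne1}): residually in $Diff^r_\omega(M)$, a quasi-elliptic periodic point is accumulated by transverse homoclinic points, hence by hyperbolic periodic points. This immediately yields a pinching periodic point $q$, and one can then either apply Theorem~B to $q$ or, as the paper does, argue directly via the Invariance Principle that the coexistence of an elliptic $p$ (irrational center rotation) and a hyperbolic $q$ already forces $\lambda_1^c\neq\lambda_2^c$: the disintegration at $q$ has at most two atoms, transported by holonomy this gives at most two atoms at $p$, but an irrational projective rotation has no finite invariant set. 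Newhouse's result is exactly the ``classical mechanism'' you are reaching for; citing it (or reproducing its proof) is what your step~2 needs.
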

The following observations are going to prove that we can reduce the proof to two cases. 

Suppose $M$ is symplectic manifold and $dim\; M=2d$. Given a periodic point $p$ of a symplectic diffeomorphism $f$, define the \textit{principal eigenvalues} to be those $d$ eigenvalues with norm greater that one or with norm equal to one and imaginary part greater than zero and the half of the eigenvalues equals to 1 or -1. If the principal eigenvalues are multiplicative independent over the integers, that is $\prod \lambda_i^{p_i}=1$ with $p_i\in \mathbb{Z}$ implies $p_i=0$ for every $i\in \{1,...,d\}$, we say $p$ is \textit{elementary}.

Let $f\in B^r_{\omega}(M)$ and $p$ be a periodic point with $n_p=per(p)$. By the results in \cite{ROB}, we can suppose that $p$ is elementary. This implies that the eigenvalues of $Df^{n_p}|E^c_p$ satisfy one of the following: 
\begin{enumerate}[label=(\roman*)]
\item there exists $0< \rho < 1$ such that the eigenvalues are $\rho$ and $\rho^{-1}$, or 
\item there exist $x,y\in \mathbb{R}$ such that the eigenvalues are $x+iy$ and $x-iy$ with $x^2+y^2=1$ and they are not a root of unity. 
\end{enumerate}
We are going to call option (i) the hyperbolic case and option (ii) the elliptic case. Then, it is sufficient to prove Theorem A under the hypothesis of $p$ being in one of these cases.
 
\subsubsection{Hyperbolic Case} 

Fix $f\in B^r_{\omega}(M)$ and suppose $p$ is a periodic point satisfying (i). Then, $p$ is a pinching periodic point and we can apply Theorem B to find a symplectic diffeomorphism $g$ $C^r$-arbitrarily close to $f$, with $\lambda^c_1(g)\neq \lambda^c_2(g)$ at almost every point. By the symmetry of the Lyapunov exponents, we have that they are non-zero almost everywhere and therefore we proved Theorem A in this case. 

\subsubsection{Elliptic Case} 

\begin{defn} We say that a periodic point $p$ of period $n_p$ is \emph{quase-elliptic} if there exists $1\leq l \leq d$ such that $Df^{n_p}_p$ has 2l non-real eigenvalues of norm one and its remaining eigenvalues have norm different from one. 
\end{defn} 

We are going to use the following result:

\begin{prop}[Proposition 3.1, \cite{NEW}]\label{ne1} For every $1\leq r\leq \infty$, there exists a residual set $R\subset Diff^{r}_{\omega}(M)$, such that if $f\in R$, then each quasi-elliptic periodic point of $f$ is the limit of transversal homoclinic points.
\end{prop}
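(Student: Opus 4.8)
The plan is to prove Proposition \ref{ne1} by the usual residuality scheme. I would first intersect everything with the (residual) Kupka--Smale set of $Diff^r_\omega(M)$, so that periodic points of bounded period are finitely many and nondegenerate; in the symplectic category nondegeneracy keeps all eigenvalues of $Df^{n_p}_p$ simple and prevents the unimodular ones from being low-order roots of unity, and a simple non-real unimodular eigenvalue cannot leave the unit circle under a small symplectic perturbation. Hence, along the Kupka--Smale set, a quasi-elliptic periodic point of period at most $n$ has a well-defined continuation which is again quasi-elliptic, with the same $l$. For positive integers $n,j$ I would let $\mathcal{R}_{n,j}$ be the set of $f$ for which every quasi-elliptic periodic point of period at most $n$ has a transversal homoclinic point of some hyperbolic periodic orbit at distance less than $1/j$ from it, and take $R=\bigcap_{n,j}\mathcal{R}_{n,j}$ inside the Kupka--Smale set. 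Openness of this part of $\mathcal{R}_{n,j}$ is then routine: there are finitely many quasi-elliptic points of period $\le n$, each persists and varies continuously, and a transversal intersection of the stable and unstable manifolds of a hyperbolic periodic orbit persists and varies continuously as well. So the weight of the argument is the density of each $\mathcal{R}_{n,j}$.

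For density, fix $f$, which I may assume Kupka--Smale, with a quasi-elliptic periodic point $p$ of period $n_p$ and with $2l$ non-real unimodular eigenvalues. Let $E^{e}_p\subset T_pM$ be the sum of the corresponding eigenspaces: it is a $2l$-dimensional symplectic subspace, and there is a $C^r$ local center manifold $W^{e}_{\mathrm{loc}}(p)$ tangent to it which is symplectic, $f^{n_p}$-invariant after shrinking, carries $p$ as an elliptic fixed point, and is complemented by the strong stable and strong unstable manifolds of $p$. The first perturbation I would make is supported in a small neighborhood of $p$ and, using the Perturbation Lemma (Lemma~2.1 of \cite{NEW}) together with Robinson-type adjustments of the jet of $f^{n_p}$ at $p$ as in \cite{ROB}, arranges that the eigenvalue arguments at $p$ are non-resonant up to a large prescribed order and that the Birkhoff normal form of $f^{n_p}|_{W^{e}_{\mathrm{loc}}(p)}$ at $p$ is a nondegenerate twist. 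For such a map the conservative twist and fixed-point machinery---Poincar\'e--Birkhoff together with Aubry--Mather theory when $l=1$, and the Birkhoff--Lewis--Moser theorem in general---produces, along resonances accumulating the rotation vector, hyperbolic ``Birkhoff'' periodic orbits $q_m\to p$ inside $W^{e}_{\mathrm{loc}}(p)$. A further Kupka--Smale-type perturbation, now localized near the $q_m$ so as not to disturb the twist at $p$ nor move $p$ appreciably, makes the stable and unstable manifolds of each $q_m$ cross transversally, yielding transversal homoclinic points $z_m\to p$; reinstating the product structure along the strong directions turns the $q_m$ into hyperbolic periodic orbits of the perturbed diffeomorphism on $M$ whose transversal homoclinic points accumulate $p$, which is exactly what density demands.

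I expect the main obstacle to be this density step, and within it two points. First, realizing a nondegenerate twist by a perturbation that is simultaneously symplectic and supported in an arbitrarily small ball around $p$: here one cannot work with ad hoc bump functions and must use generating functions, which is precisely the role of the Perturbation Lemma of \cite{NEW}. Second, upgrading the a priori merely topological Birkhoff periodic orbits to genuinely hyperbolic orbits whose invariant manifolds meet transversally, with the successive perturbations supported in nested sets away from $p$ so that the property gained at one stage survives the next; when $l>1$ the planar twist picture must be replaced by the Birkhoff--Lewis--Moser mechanism, which is the most technical ingredient. The $G_\delta$ reduction and the openness bookkeeping are routine by comparison.
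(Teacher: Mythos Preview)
The paper does not prove this proposition: it is quoted verbatim as Proposition~3.1 of \cite{NEW} and used as a black box in the elliptic case of Theorem~A, so there is no ``paper's own proof'' to compare your proposal against. Your sketch is a reasonable outline of Newhouse's original argument (residuality via Kupka--Smale, Birkhoff normal form with nondegenerate twist on the elliptic center manifold, Birkhoff--Lewis--Moser periodic orbits accumulating $p$, then transversality), and you have correctly identified the two genuinely delicate points: the symplectic perturbation via generating functions and the upgrade to hyperbolic orbits with transversal homoclinic intersections when $l>1$.

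For the purposes of this paper, however, no proof is expected here; the author only needs the statement to conclude that, in the elliptic case, one may assume the existence of a nearby hyperbolic periodic point~$q$.
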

Fix $f\in B^r_{\omega}(M)$ and suppose $p$ is a periodic point satisfying (ii). Then, $p$ is a quasi-elliptic periodic point. By Theorem \ref{ne1}, we can suppose that besides of $p$, $f$ has a hyperbolic periodic point $q$. At this point we could apply Theorem B to finish the proof. However, we are going to show that the coexistence of hyperbolic and elliptic periodic points is an obstruction for the rigidity given by Invariance Principle. An argument similar to the one we give here can be found in Remark 2.9 of \cite{ASV}.

Define $F=Df\vert E^c$ and suppose $\lambda^c_1(f)=\lambda^c_2(f)$ almost everywhere. We fix some $\mathbb{P}(F)$-invariant probability measure $m$ with $\pi_{*}m=\mu$ and apply the Invariance Principle. Then, there exits a disintegration $\{m_x: x\in M\}$ such that $\mathbb{P}(F(x))_{*}m_x=m_{f(x)}$ for every $x\in M$.

Since $f$ is accessible, there exist a $su$-path $\zeta$ joining $q$ to $p$. Let $h_{\zeta}$ denote the holonomy defined by $\zeta$ for $\mathbb{P}(F)$. 

Denote $n_p=per(p)$ and $n_q=per(q)$. Let $m_{p}$ and $m_{q}$ be the elements of the disintegration given by the Invariant Principle at $p$ and $q$ respectively. Then, $$\mathbb{P}(F^{n_p}(p))_{*}m_{p}=m_{p},\qquad \mathbb{P}(F^{n_q}(q))_{*}m_{q}=m_{q}\quad \text{and}\quad (h_{\zeta})_{*}m_{q}=m_{p}.$$

Since $q$ is hyperbolic, there exist two points, $a$ and $b$, in $\mathbb{P}(E^c_q)$ such that $supp\, m_q\subset \left\{a, b\right\}$. Therefore, the support of $m_{p}$ contains at most two points. This implies that $\mathbb{P}(F^{n_p}(p))$ has a periodic point of period 1 or 2. However, this contradicts the fact of $p$ being an elliptic periodic point satisfying (ii).

Therefore, we have that the center Lyapunov exponents of $f$ must be different at almost every point. This finish the proof of Theorem A.
\qed

\section{Applications}

In this section we show examples of partially hyperbolic symplectic diffeomorphism that can be $C^r$-approximated by diffeomorphisms in $B^r_{\omega}(M)$ having a periodic point. Then, by Theorem A, we are be able to approximate these examples by non-uniformly hyperbolic systems.

Let $r\geq 2$. $B^{r}_{\omega}(M)$ is the subset of $PH^{r}_{\omega}(M)$ where $f$ is accessible, $\alpha$-pinched and $\alpha$-bunched for some $\alpha>0$, and the center bundle $E^c$ is 2-dimensional. 

For $d\geq 1$, let $\mathbb{T}^{2d}$ denote the $2d$-torus. 
\begin{corol} Let $f:\mathbb{T}^{2d}\longrightarrow \mathbb{T}^{2d}$ be a $C^r$ Anosov symplectic diffeomorphism and $g:\mathbb{T}^{2}\longrightarrow \mathbb{T}^{2}$ a symplectic linear map with eigenvalues of norm one. Then, $f\times g$ can be $C^r$-approximated by non-uniformly hyperbolic diffeomorphisms.
\end{corol}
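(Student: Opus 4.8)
The plan is to produce, arbitrarily $C^r$-close to $f\times g$, a diffeomorphism $h\in B^r_{\omega}(M)$ that has a periodic point, where $M=\mathbb{T}^{2d}\times\mathbb{T}^2$ is equipped with the product symplectic form; Theorem A applied to $h$ then delivers non-uniformly hyperbolic symplectic diffeomorphisms $C^r$-close to $h$, and hence to $f\times g$. First I would record the geometry of $f\times g$: it is partially hyperbolic, with strong-stable and strong-unstable bundles inherited from the Anosov map $f$ and with two-dimensional center bundle $E^c=\{0\}\oplus T\mathbb{T}^2$, the point being that $D(f\times g)\vert E^c=Dg$ has only sub-exponential growth because the eigenvalues of the linear map $g$ lie on the unit circle. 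Note, however, that $f\times g$ is never accessible: $W^s(f\times g)(x,y)=W^s_f(x)\times\{y\}$ and likewise for $W^u$, so an $su$-path cannot change the $\mathbb{T}^2$-coordinate and the accessibility class of $(x,y)$ is $\mathbb{T}^{2d}\times\{y\}$. A genuine perturbation is therefore unavoidable, and it must be one that creates $su$-connections between distinct center leaves.

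The second step is to arrange the pinching and bunching conditions, and here I would use that $g$ is linear with spectrum on the unit circle to choose a flat metric on $\mathbb{T}^2$ in which $Dg$ is (close to) an isometry: when $Dg$ is elliptic it is $\mathbb{R}$-conjugate to a rotation, hence a genuine isometry, and when $Dg$ is parabolic, conjugating by a large diagonal matrix makes it uniformly close to the identity. With the resulting product metric on $M$ the central functions $\gamma,\widehat\gamma$ of $f\times g$ may be taken with $\gamma\widehat\gamma$ as close to $1$ as we wish, while $\nu,\widehat\nu,\chi,\widehat\chi$ remain the fixed rates of $f$, all $<1$. For $\gamma\widehat\gamma$ sufficiently close to $1$, the lower bound on $\alpha$ forced by $\nu^\alpha,\widehat\nu^\alpha<\gamma\widehat\gamma$ drops below the upper bounds forced by the four inequalities of Definition \ref{holder}, so $f\times g$ is $\alpha$-pinched and $\alpha$-bunched for a common $\alpha>0$ (in the elliptic case one may just take the $\alpha$ for which $f\times g$ is $\alpha$-pinched, which exists for every partially hyperbolic diffeomorphism, and use that an isometric center cocycle is $\alpha$-bunched for every $\alpha$). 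Both conditions are $C^1$-open, so they persist under a small perturbation.

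Next I would carry out the perturbation. Fix a periodic point $x_0$ of $f$ (these are dense) and let $\mathcal{O}$ be the finite $f\times g$-orbit of $(x_0,0)$, which is periodic since $g(0)=0$. By a $C^r$-small symplectic perturbation supported in a small ball disjoint from $\mathcal{O}$ — a central wiggling of the strong-(un)stable holonomies in the spirit of Shub and Wilkinson \cite{SW}, realized through localized symplectic perturbations as in \cite{NEW} — I would obtain a symplectic $h$, arbitrarily $C^r$-close to $f\times g$, which is accessible: the perturbation produces an $su$-loop whose central holonomy displaces points along the $\mathbb{T}^2$-factor, and composing such loops with $su$-paths that move freely inside the $\mathbb{T}^{2d}$-fibers exhibits $M$ as a single accessibility class. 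Because the perturbation is $C^1$-small and localized, $h$ stays partially hyperbolic with $\dim E^c=2$, stays $\alpha$-pinched and $\alpha$-bunched for the $\alpha$ fixed above, and still agrees with $f\times g$ on $\mathcal{O}$; hence $h\in B^r_{\omega}(M)$ and $h$ has the periodic point $(x_0,0)$.

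Finally, applying Theorem A to $h$ produces non-uniformly hyperbolic symplectic diffeomorphisms $C^r$-close to $h$, and the triangle inequality in the $C^r$ topology gives the corollary. I expect the main obstacle to be the third step: for $r>1$ the construction of an accessibility-creating perturbation is far more delicate than in the $C^1$ setting, and it must be kept simultaneously symplectic, $C^r$-small, supported away from the chosen periodic orbit, and inside the partially hyperbolic — and $\alpha$-pinched, $\alpha$-bunched — regime; the rigid product structure and the two-dimensionality of $E^c$ are what make this manageable.
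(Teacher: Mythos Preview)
Your proposal is correct and follows essentially the same route as the paper: verify that $f\times g$ is partially hyperbolic with $2$-dimensional center, $\alpha$-pinched and $\alpha$-bunched, and has a periodic point; perturb to gain accessibility while keeping a periodic point; then apply Theorem~A. The ``main obstacle'' you flag at the end---producing a $C^r$-small, symplectic, accessibility-creating perturbation---is exactly what the paper dispatches by invoking Theorem~A of Shub--Wilkinson \cite{SW2} (not \cite{SW}) as a black box, noting moreover that by the construction in \cite{SW2} the perturbation can be taken to agree with $f\times g$ on a chosen periodic orbit; your more detailed discussion of the pinching/bunching constants (including the parabolic case) is a welcome elaboration of what the paper asserts in one line.
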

\begin{proof}
Notice that $f\times g$ is a partially hyperbolic symplectic diffeomorphism with 2-dimensional center bundle. Moreover, $f\times g$ is $\alpha$-pinched and $\alpha$-bunched for some $\alpha>0$ and has a periodic point. 

Theorem A in \cite{SW2} imply that for every $\epsilon>0$, there exists a partially hyperbolic symplectic diffeomorphism $h$ which is accessible and $\epsilon$-close to $f\times g$ in the $C^r$ topology. By the proof in \cite{SW2}, we can suppose that $h$ coincides with $f\times g$ in the orbit of some periodic point and therefore it has itself a periodic point. If $\epsilon$ is small enough, we have $h\in B^r_{\omega}(M)$. Then, we can apply Theorem A to $C^r$-approximate $h$ by non-uniformly hyperbolic diffeomorphisms. This finish the proof.
\end{proof}

Using the same argument than above and Theorem B in \cite{SW2}, we can prove the following result.

\begin{corol} Let $g:\mathbb{T}^{2}\longrightarrow \mathbb{T}^{2}$ be a $C^r$ symplectic diffeomorphism. Then, for every $d\geq 1$ there exists $f:\mathbb{T}^{2d}\longrightarrow \mathbb{T}^{2d}$ a $C^r$ Anosov symplectic diffeomorphism such that $f\times g$ can be $C^r$-approximated by non-uniformly hyperbolic diffeomorphisms.
\end{corol}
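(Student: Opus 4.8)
The plan is to imitate the proof of the previous Corollary, using Theorem B in \cite{SW2} (which allows an arbitrary surface symplectic map $g$, at the price of also choosing the Anosov factor) in place of Theorem A in \cite{SW2} (which only handled $g$ linear and elliptic). First I fix the given $C^r$ symplectic diffeomorphism $g:\mathbb{T}^2\to\mathbb{T}^2$ and record the constants $\gamma_0=\min_{x,\,\|v\|=1}\|Dg_x(v)\|\in(0,1]$ and $\Gamma_0=\max_{x,\,\|v\|=1}\|Dg_x(v)\|\in[1,\infty)$, which are finite and positive by compactness and invertibility of $g$. Then I invoke Theorem B in \cite{SW2}: it produces a $C^r$ Anosov symplectic diffeomorphism $f$ of $\mathbb{T}^{2d}$ — which, as its proof allows and as we will need below, may be taken arbitrarily hyperbolic — such that $f\times g$ is partially hyperbolic with center bundle tangent to the $\mathbb{T}^2$-factor, and such that for every $\epsilon>0$ there is a partially hyperbolic symplectic diffeomorphism $h$ of $\mathbb{T}^{2d}\times\mathbb{T}^2$ that is accessible and $\epsilon$-close to $f\times g$ in the $C^r$ topology; moreover, exactly as in the previous Corollary, the perturbation in \cite{SW2} is local and can be taken to coincide with $f\times g$ along the orbit of a periodic compact $C^r$ center leaf $\{q\}\times\mathbb{T}^2$.

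The only point that requires an argument, and the reason the statement asserts ``there exists $f$'' rather than ``for every $f$'', is that $f\times g$ must lie in $B^r_\omega(\mathbb{T}^{2d}\times\mathbb{T}^2)$, i.e.\ be $\alpha$-pinched and $\alpha$-bunched for some $\alpha>0$. I would secure this by taking the Anosov factor as hyperbolic as needed: fix any $\alpha\in(0,1)$, fix a linear Anosov symplectic automorphism $A$ of $\mathbb{T}^{2d}$ (for instance a $d$-fold product of a hyperbolic matrix in $SL(2,\mathbb{Z})$, which preserves the product symplectic form), and put $f=A^N$ with $N$ large. Then the rates $\nu,\widehat\nu,\chi,\widehat\chi$ of $f$ along $E^s\oplus E^u$ are $\le\lambda^{-N}$ for a fixed $\lambda>1$, while the center rates of $f\times g$ may be chosen $N$-independent with $\gamma\,\widehat\gamma\ge\gamma_0/\Gamma_0>0$; hence each partial hyperbolicity inequality, each of the four $\alpha$-pinched inequalities, and each of the two $\alpha$-bunched inequalities reduces to a bound of the shape $\lambda^{-cN}<(\text{fixed positive constant})$ with $c>0$ (namely $c=1$, $c=1-\alpha$ and $c=\alpha$ respectively), so all of them hold once $N=N(\alpha,g)$ is large. (If $g$ happens to be an isometry one also shrinks $\gamma$ and enlarges $\widehat\gamma^{-1}$ slightly to keep $\gamma<\widehat\gamma^{-1}$ strict.) Since accessibility, the $\alpha$-pinched and $\alpha$-bunched conditions and $\dim E^c=2$ are all $C^1$-open, the approximant $h$ given by \cite{SW2} again satisfies them when $\epsilon$ is small, so $h\in B^r_\omega(\mathbb{T}^{2d}\times\mathbb{T}^2)$.

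Finally, $q$ may be taken periodic under $f=A^N$, so $\{q\}\times\mathbb{T}^2$ is a periodic compact $C^r$ center leaf of $f\times g$ and, since the perturbation of \cite{SW2} is localized away from its orbit, also of $h$; by the Remark following Theorem A this is enough to apply Theorem A of this paper to $h$ (alternatively, by \cite{XZ}, one first replaces $h$ by a $C^r$-close symplectic diffeomorphism with an actual periodic point). Theorem A then yields non-uniformly hyperbolic symplectic diffeomorphisms $C^r$-arbitrarily close to $h$, hence $2\epsilon$-close to $f\times g$ in the $C^r$ topology; letting $\epsilon\to0$ finishes the proof. The one non-formal step is the hyperbolicity bookkeeping of the second paragraph; everything else is a verbatim copy of the preceding Corollary with Theorem B in \cite{SW2} substituted for Theorem A in \cite{SW2}.
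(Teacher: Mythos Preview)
Your proposal is correct and follows exactly the approach the paper indicates: its entire proof of this corollary is the single sentence ``Using the same argument than above and Theorem B in \cite{SW2}, we can prove the following result.'' Your explicit verification that $f=A^N$ with $N$ large makes $f\times g$ simultaneously $\alpha$-pinched and $\alpha$-bunched, and your use of the Remark after Theorem~A (a periodic compact $C^r$ center leaf suffices, since an arbitrary area-preserving $g$ need not have periodic points), are helpful elaborations of what the paper leaves implicit, not a different route.
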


Let $\lambda$ be a real parameter. The \textit{standard map} $g_{\lambda}$ of the 2-torus is defined by $$g_{\lambda}(z,w)=(z+w,w+ \lambda \sin (2\pi(z+w))),$$ and it preserves the symplectic form in $\mathbb{T}^2$. By KAM theory, for all values of $\lambda$ near zero, there exists a $C^r$ neighborhood of $g_{\lambda}$ such that any diffeomorphism in this neighborhood has an invariant subset with positive volume where both Lyapunov exponents are zero. The following result shows that if we add some transverse hyperbolicity, we are able to remove the zero Lyapunov exponents. 

\begin{corol} Let $f:\mathbb{T}^{2d}\longrightarrow \mathbb{T}^{2d}$ be a $C^r$ Anosov symplectic diffeomorphism. If $\lambda$ is close enough to zero, $f\times g_{\lambda}$ can be $C^r$-approximated by non-uniformly hyperbolic diffeomorphisms.
\end{corol}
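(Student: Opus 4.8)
The plan is to reduce the statement to the corollary proved above for products $f\times g$ with $g$ a linear symplectic automorphism of $\mathbb{T}^2$ having eigenvalues of norm one, together with the $C^1$-openness of the conditions defining $B^r_\omega$, and then to apply Theorem A. Throughout, $M$ denotes $\mathbb{T}^{2d}\times\mathbb{T}^2$.

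At the parameter $\lambda=0$ the standard map degenerates to the linear map $g_0(z,w)=(z+w,w)$, whose derivative is the constant parabolic matrix $\bigl(\begin{smallmatrix}1&1\\0&1\end{smallmatrix}\bigr)$; in particular $g_0$ is a symplectic linear automorphism of $\mathbb{T}^2$ both of whose eigenvalues equal $1$, so $f\times g_0$ satisfies the hypotheses of that corollary. As in its proof, for an appropriate choice of flat metric on the $\mathbb{T}^2$-factor the product $f\times g_0$ is partially hyperbolic with $2$-dimensional center $E^c=0\times T\mathbb{T}^2$, is $\alpha$-pinched and $\alpha$-bunched for some $\alpha>0$, and has periodic points. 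The point I would be careful about here is that $Dg_0|_{E^c}$ is the parabolic matrix above, which is \emph{not} an isometry of the standard metric: one first conjugates the flat metric on $\mathbb{T}^2$ by a suitable diagonal linear map, making $Dg_0|_{E^c}$ $C^0$-close to an isometry, so that the center rates $\gamma$ and $\widehat\gamma^{-1}$ become as close to $1$ as wanted while the Anosov rates $\nu,\widehat\nu<1$ of $f$ are left unchanged. With the center rates near $1$, the pinching inequalities of Definition~\ref{holder} hold for every sufficiently small $\alpha>0$, whereas the bunching inequalities $\nu^\alpha<\gamma\widehat\gamma$ and $\widehat\nu^\alpha<\gamma\widehat\gamma$ of Definition~\ref{bunched} hold for every $\alpha$ above a threshold that tends to $0$ as $\gamma\widehat\gamma\to1$; hence a common admissible $\alpha>0$ exists.

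Next I would pass from $\lambda=0$ to small $\lambda$. Since partial hyperbolicity, $\alpha$-pinching, $\alpha$-bunching, and the dimension of the center bundle are $C^1$-open conditions, and $g_\lambda\to g_0$ in the $C^r$ topology as $\lambda\to0$ (the metric chosen above being now fixed), there is $\lambda_0>0$ such that for every $|\lambda|<\lambda_0$ the diffeomorphism $f\times g_\lambda$ is partially hyperbolic, $\alpha$-pinched and $\alpha$-bunched, with $2$-dimensional center. Moreover $(0,0)$ is a fixed point of $g_\lambda$ for every $\lambda$, so $(p,(0,0))$ is a periodic point of $f\times g_\lambda$ for any periodic point $p$ of the Anosov map $f$. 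The only condition of $B^r_\omega$ that $f\times g_\lambda$ fails is accessibility: being a product, its accessibility classes are the fibers $\{x\}\times\mathbb{T}^2$.

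To remove this obstruction I would argue as in the proof of the corollary mentioned above. Given $\epsilon>0$, Theorem A of \cite{SW2} provides a partially hyperbolic symplectic diffeomorphism $h$ that is accessible, $\epsilon$-close to $f\times g_\lambda$ in the $C^r$ topology, and coincides with $f\times g_\lambda$ along the orbit of $(p,(0,0))$; in particular $h$ has a periodic point. For $\epsilon$ small, the $C^1$-open properties of the previous paragraph persist for $h$, so $h\in B^r_\omega(M)$, and Theorem A applies to $h$: there are non-uniformly hyperbolic symplectic diffeomorphisms arbitrarily $C^r$-close to $h$, hence arbitrarily $C^r$-close to $f\times g_\lambda$. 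As $\epsilon>0$ was arbitrary, the Corollary follows. I expect the main obstacle to be the first step: because the standard map at $\lambda=0$ is parabolic rather than isometric on the center, neither partial hyperbolicity nor (above all) the bunching condition is automatic for an arbitrary Anosov $f$ and the standard metric, and both must be arranged by absorbing the shear into the metric and then balancing $\alpha$ between the pinching and bunching inequalities; everything afterwards is a routine repetition of the earlier arguments plus one use of Theorem A.
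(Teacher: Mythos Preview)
Your proof is correct and follows essentially the same route as the paper: use that $f\times g_0$ is $\alpha$-pinched and $\alpha$-bunched, pass to small $\lambda$ by $C^1$-openness, then invoke \cite{SW2} for accessibility (preserving a periodic point) and apply Theorem~A. The paper simply asserts the pinching/bunching of $f\times g_0$ by reference to Corollary~1, whereas you supply the extra justification---rescaling the metric on the $\mathbb{T}^2$-factor to tame the parabolic shear---which is a point the paper leaves implicit.
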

\begin{proof} The argument is the same as before, we need to prove that $f\times g_{\lambda}$ can be $C^r$-approximated by diffeomorphisms in $B^r_{\omega}(M)$ having a periodic point. The only observation we need to make is that $f\times g_{\lambda}$ is $\alpha$-pinched and $\alpha$-bunched for some $\alpha>0$ when $\lambda$ is close enough to zero because $f\times g_0$ is $\alpha$-pinched and $\alpha$-bunched and both conditions are open. The rest of the proof follows using \cite{SW2} and Theorem A.
\end{proof}

\end{document}